\newcommand{\R}{{\mathbb R}}
\newcommand{\Z}{{\mathbb Z}}
\newcommand{\N}{{\mathbb N}}
\newcommand{\Q}{{\mathbb Q}}
\newcommand{\CCC}{\mathcal{C}}
\newcommand{\EEE}{\mathcal{E}}
\newcommand{\HHH}{\mathcal{H}}
\newcommand{\PPP}{\mathcal{P}}
\renewcommand{\PPP}{P}
\newcommand{\XXX}{\mathcal{X}}
\newcommand{\YYY}{\mathcal{Y}}
\newcommand{\SL}{\operatorname{SL}}
\newcommand{\GL}{\operatorname{GL}}
\newcommand{\Aff}{\operatorname{Aff}}
\newcommand{\Lsurf}{\operatorname{L}}
\newcommand{\dist}{\operatorname{dist}}
\newcommand{\e}{\varepsilon}
\newcommand{\Va}{V_{a,b}}
\newcommand{\Vp}{V_{a',b'}}
\newcommand{\alert}[1]%
  {\par\medskip\par\hspace*{-1cm}%
  \fbox{\footnotesize\parbox{\linewidth}{#1}}\par\medskip\par}
\newcommand\mat[4]{%
  \bigl( \begin{smallmatrix} #1&#2\\ #3&#4 
  \end{smallmatrix}\bigr)
  }
\newcommand\vct[2]{\begin{pmatrix} #1\\#2\end{pmatrix}}
\newtheorem{theo}{Theorem}
\newtheorem{coro}[theo]{Corollary}
\newtheorem{lemma}[theo]{Lemma}
\theoremstyle{definition}
\newtheorem{claim}[theo]{Claim}
\newtheorem*{claim*}{Claim}
\theoremstyle{remark}
\newtheorem*{defi}{Definition}
\newtheorem{remk}[theo]{Remark}
\newenvironment{proofof}[1]{\noindent {\bf Proof of #1.}}{\hfill\qed\par}
\begin{document}


\title{The Ehrenfest wind-tree model: periodic directions, recurrence,
diffusion}

\author{Pascal Hubert}
\address{Laboratoire d'Analyse, Topologie et Probabilités, 
Case cour A,
Faculté des Sciences de  Saint-Jerôme,
Avenue Escadrille Normandie-Niemen,
13397 Marseille Cedex 20, France.}
\email{hubert@cmi.univ-mrs.fr}
\urladdr{http://www.cmi.univ-mrs.fr/{\lower.7ex\hbox{\~{}}}hubert/} \date{}

\author{Samuel Lelièvre}
\address{Laboratoire de mathématique d'Orsay,
UMR 8628 CNRS / Université Paris-Sud 11,
Bâtiment 425, Campus Orsay-vallée,
91405 Orsay cedex, France.}
\email{samuel.lelievre@math.u-psud.fr}
\urladdr{http://www.math.u-psud.fr/{\lower.7ex\hbox{\~{}}}lelievre/} \date{}

\author{Serge Troubetzkoy}
\address{Centre de physique théorique\\
Federation de Recherches des Unites de Mathematique de Marseille\\
Institut de mathématiques de Luminy and\\ 
Université de la Méditerranée\\ 
Luminy, Case 907, F-13288 Marseille Cedex 9, France}
\email{troubetz@iml.univ-mrs.fr}
\urladdr{http://iml.univ-mrs.fr/{\lower.7ex\hbox{\~{}}}troubetz/} \date{}
 
\date{October 2, 2009}

\begin{abstract}
We study periodic wind-tree models, unbounded planar billiards with 
periodically located rectangular obstacles.  For a class of
rational parameters we show the existence of completely periodic
directions, and recurrence; for another class of rational parameters,
there are directions in which all trajectories escape, and we
prove a rate of escape for almost all directions.  These results 
extend to a dense $G_\delta$ of parameters.
\end{abstract}

\keywords{Billiards, periodic orbits, recurrence, diffusion, 
square-tiled surfaces}
\subjclass[2000]{30F30, 37E35, 37A40}

\maketitle \markboth{Pascal Hubert, Samuel Lelièvre, Serge
Troubetzkoy}{The Ehrenfest wind-tree model: periodic directions,
recurrence, diffusion}


\section{Introduction}

In 1912 Paul and Tatiana Ehrenfest proposed the wind-tree model of
diffusion in order to study the statistical interpretation of the
second law of thermodynamics and the applicability of the Boltzmann
equation \cite{EhEh}.  In the Ehrenfest wind-tree model, a point
(``wind'') particle moves on the plane and collides with the usual law
of geometric optics with randomly placed fixed square scatterers
(``tree'').

In this paper, we study periodic versions of the wind-tree model: the
scatterers are identical rectangular obstacles located periodically
along a square lattice on the plane, one obstacle centered at each
lattice point.  We call the subset of the plane obtained by removing
the obstacles the billiard table (see some pictures in the Appendix),
even though it is a non compact space.  Hardy and Weber \cite{HaWe}
have studied the periodic model, they proved recurrence and abnormal
diffusion of the billiard flow for special dimensions of the obstacles
and for very special directions, using results on skew products above
rotations.  In the general periodic case, the situation is much more
difficult and, since this nice result, there has been no progress.  In
fact very few results are known for linear flows on translation
surfaces of infinite area, or for billiards in irrational polygons
from which they also arise (see however \cite{DDL}, \cite{GuTr},
\cite{Ho}, \cite{HuWe}, \cite{Tr}, \cite{Tr1}, \cite{Tr2}).

\subsection{Statement of results}

Without loss of generality, we assume that the lattice is the standard
$\Z^2$-lattice.  We denote by $a$ and $b$ the dimensions of the
rectangular obstacles, and by $T_{a,b}$ the corresponding billiard
table.  The set of billiard tables under study is hence parametrized
by $(a,b)$ in the noncompact parameter space $\XXX =
(0,1)^2$.

We define a dense subset of parameter values:
\begin{align*}
\EEE & = 
\bigl\{\, (a,b) = (p/q,r/s) \in \Q \times \Q :
\\
& \phantom{={}}\quad
(p,q) = (r,s) =1,\quad 0<p<q,\quad 0<r<s,
\\[-3pt]
& \phantom{={}}\quad
p,r \textrm{ odd,}\quad q,s \textrm{ even}
\,\bigr\}.
\end{align*}

Given a flow $\Phi$ acting on a measured topological space
$(\Omega,\mu)$, a point $x \in \Omega$ is \emph{recurrent} for $\Phi$
if for every neighborhood $U$ of $x$ and any $T_0 > 0$ there is a time
$T>T_0$ such that $\Phi_{T}(x) \in U$; the flow $\Phi$ itself is
\emph{recurrent} if almost every point (with respect to $\mu$) is
recurrent.

In our setting, the billiard flow $\phi^{\theta}$ is the flow at
constant unit speed in direction $\theta$, bouncing off at equal
angles upon hitting the rectangular obstacles.  Regular trajectories
are those which never hit a corner (where the billiard flow is
undefined).  In the whole paper, \emph{direction} is to be understood
as \emph{slope}.  We prove the following results:

\begin{theo}
\label{billiard}
If the rectangular obstacles have dimensions $(a,b)\in\EEE$, then, for
the billiard table $T_{a,b}$:
\begin{itemize}
\item
there is a subset $P$ of $\Q$, dense in $\R$, such that 
every regular trajectory starting with direction in $P$ is periodic;
\item
for almost every direction, the billiard flow is recurrent with
respect to the natural phase volume.
\end{itemize}
\end{theo}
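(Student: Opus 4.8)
The plan is to unfold the billiard into a linear flow on a translation surface and then exploit that, for $(a,b)\in\EEE$, this surface covers a Veech (lattice) surface. Unfolding $T_{a,b}$ by the usual reflection trick in the two families of obstacle sides --- only the reflections $x\mapsto-x$ and $y\mapsto-y$ occur, so four sheets suffice --- produces a translation surface $Z_{a,b}$, with abelian differential $\omega$, on which the billiard flow $\phi^{\theta}$ becomes the straight-line flow in direction $\theta$. Since $T_{a,b}$ is $\Z^2$-periodic, $Z_{a,b}$ is an infinite $\Z^2$-cover of the compact translation surface $X_{a,b}$ assembled from four copies of the flat unit torus with a rectangular slit of sides $a\times b$. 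For $(a,b)\in\EEE$ --- and here the parity conditions are used --- the surface $X_{a,b}$ is, after an affine rescaling, tiled by unit squares, hence is a square-tiled surface, in particular a Veech surface. Write $\rho\colon H_1(X_{a,b};\Z)\to\Z^2$ for the homomorphism classifying the cover; it is induced by the natural degree-four map $\bar\pi\colon X_{a,b}\to\R^2/\Z^2$ forgetting the sheet, and I write $\alpha_1,\alpha_2$ for the pullbacks under $\bar\pi$ of the two coordinate $1$-forms of the torus, so that $\rho=([\alpha_1],[\alpha_2])$.

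For the first bullet, the Veech dichotomy shows that every rational slope is completely periodic for $X_{a,b}$: it decomposes into finitely many cylinders $C_1,\dots,C_k$ with core curves $\gamma_1,\dots,\gamma_k$, and every regular leaf lies in one of them. A regular leaf of $Z_{a,b}$ over $C_i$ is closed exactly when $\rho(\gamma_i)=0$, so the task is to produce a set $P$ of rational slopes, dense in $\R$, in whose cylinder decomposition every core curve lies in $\ker\rho$. I would find such slopes by a direct analysis of the cylinder decompositions, using the $(\Z/2)^2$ reflection symmetry to organise the bookkeeping of holonomies: the horizontal direction fails because of the ``free'' cylinders winding around the torus without meeting an obstacle, whereas for slopes with large enough numerator and denominator no such cylinder occurs; it then remains to check that for a suitable dense family every remaining cylinder closes up in $Z_{a,b}$. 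Regular trajectories of $\phi^{\theta}$ with $\theta\in P$ are then periodic, and $P$ is dense in $\R$.

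For the second bullet, the Veech property gives that the periodic directions on $X_{a,b}$ are countable and every other direction is uniquely ergodic, so for Lebesgue-almost every $\theta$ the direction-$\theta$ flow on $X_{a,b}$ is uniquely ergodic, hence ergodic. Fix such a $\theta$, take a cross-section $\Sigma$, and let $T$ be the first-return interval exchange and $\psi\colon\Sigma\to\Z^2$ the cocycle recording the deck transformation needed to close a returning orbit up in $Z_{a,b}$. The cocycle has zero mean: crossing between two sheets related by a reflection negates $\mathrm{Re}\,\omega$ or $\mathrm{Im}\,\omega$ while leaving $\alpha_1,\alpha_2$ unchanged, so the four sheets cancel in pairs and $\int_{X_{a,b}}\alpha_i\wedge\mathrm{Re}\,\omega=\int_{X_{a,b}}\alpha_i\wedge\mathrm{Im}\,\omega=0$ for $i=1,2$; equivalently the asymptotic (Schwartzman) cycle of the direction-$\theta$ flow has trivial image under $\rho$ for every $\theta$, whence $\int_\Sigma\psi=0$. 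By the recurrence theorem for zero-mean $\Z^2$-valued cocycles over an ergodic transformation (Atkinson in rank one, Conze in rank two; recurrence genuinely fails in rank three), the skew product $(x,v)\mapsto(Tx,v+\psi(x))$ is conservative. Suspending and undoing the cross-section, the straight-line flow on $Z_{a,b}$, that is $\phi^{\theta}$ on $T_{a,b}$, is recurrent with respect to the phase volume for almost every $\theta$.

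The genuine difficulty is the density of $P$ in the first bullet. Exhibiting one direction with all holonomies trivial is a finite check, but the natural propagation mechanisms fail: the Veech group of the infinite surface $Z_{a,b}$ is elementary (generated by $\mathrm{diag}(\pm1,\pm1)$ and the deck translations, so it does not move directions densely), and too few affine automorphisms of $X_{a,b}$ preserve $\ker\rho$. One is thus forced to argue directly that, for a dense set of rational slopes, every cylinder of $X_{a,b}$ in that direction closes up in $Z_{a,b}$, which is where the square-tiled combinatorics and the parity defining $\EEE$ really enter. For the recurrence statement the delicate points are only the passage to a cross-section, the correct use of the rank-two recurrence criterion --- the model lies exactly on the borderline, which is why $\Z^2$-periodicity is essential --- and the remark, immediate from the Veech dichotomy, that the excluded periodic and non-uniquely-ergodic directions form a Lebesgue-null set.
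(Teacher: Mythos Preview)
Your proposal has a genuine gap in each of the two bullets.

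\textbf{First bullet.} You correctly identify the problem --- find a dense set of rational slopes for which every cylinder of the compact quotient has core curve in $\ker\rho$ --- and you correctly observe that propagation by the Veech group fails. But you do not actually solve it: ``a direct analysis of the cylinder decompositions'' and ``a finite check'' are placeholders, and your compact quotient (four slit tori glued along the reflection pattern) is a genus~$5$ surface on which the cylinder combinatorics in a general rational direction is unwieldy. The paper's mechanism is quite different and is where the parity conditions defining $\EEE$ are truly used. One quotients \emph{further} by the $(\Z/2)^2$ reflection symmetry to a genus~$2$ L-shaped surface $Y_{a,b}\in\HHH(2)$. For $(a,b)\in\EEE$ this primitive square-tiled surface lies in McMullen's ``orbit~A'', which forces the two distinguished Weierstrass points $E$, $F$ (midpoints of the short sides of the L) to lie on the waist curve of \emph{every} one-cylinder direction. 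A short billiard-symmetry argument (the trajectory through a lift of $E$ is symmetric about a vertical line, through a lift of $F$ about a horizontal line) then shows that any such one-cylinder direction on $Y_{a,b}$ lifts to a strongly parabolic direction on the infinite surface, with all cylinders of length exactly twice the base length. Density follows because one-cylinder directions form a full cusp orbit of the lattice Veech group of $Y_{a,b}$. None of this is visible at the level of your four-sheeted quotient.

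\textbf{Second bullet.} The statement you invoke --- ``recurrence theorem for zero-mean $\Z^2$-valued cocycles over an ergodic transformation \dots\ Conze in rank two'' --- does not exist in that generality. Atkinson's theorem is a rank-one phenomenon; for $\Z^2$- or $\R^2$-valued cocycles, zero mean over an ergodic base does \emph{not} imply recurrence (there are explicit transient examples already over irrational rotations, cf.\ Chevallier--Conze). Conze's two-dimensional criteria require quantitative input (variance/CLT-type control, or specific Diophantine structure of the base), none of which you have established here. So your recurrence argument, as written, has no engine. The paper avoids cocycle recurrence entirely: having produced in part one a set $P$ of \emph{strongly parabolic} directions on the infinite surface that contains a cusp of a lattice in $\SL_2(\R)$, it uses the Khinchin--Sullivan theorem to approximate Lebesgue-a.e.\ $\theta$ by $p_n/q_n\in P$ with $|\theta-p_n/q_n|\le \varepsilon_n/q_n^2$. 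Since in direction $p_n/q_n$ the infinite surface decomposes into isometric cylinders of length $\le Kq_n$ and height $\ge 1/(Kq_n)$, flowing in direction $\theta$ from the base rectangle one returns to it except on a set of measure $O(\varepsilon_n)\to0$. Recurrence then follows from Poincar\'e on the induced first-return map. Thus the second bullet is not independent of the first: the dense set of periodic directions is the input that drives recurrence.
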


Since $\EEE$ is countable, the set of directions of full measure in
the last statement can even be chosen independent of the parameter
$(a,b)\in\EEE$.

\medskip

For $j \in \N$ denote by $\log_j$ the $j$-th iterate of the logarithm
function, i.e., $\log_j = \log \circ \dots \circ \log$ ($j$ times).

\smallskip

Setting different parity conditions and defining
\begin{align*}
\EEE' & = 
\bigl\{\, (a,b) = (p/q,r/s) \in \Q \times \Q :
\\
& \phantom{={}}\quad
(p,q) = (r,s) =1,\quad 0<p<q,\quad 0<r<s,
\\[-3pt]
& \phantom{={}}\quad
p,r \textrm{ even,}\quad q,s \textrm{ odd}\,\bigr\},
\end{align*}
we have

\begin{theo}
\label{thm:billiard-other-parity}
If the rectangular obstacles have dimensions $(a,b)\in\EEE'$, then, for
the billiard table $T_{a,b}$:
\begin{itemize}
\item there is no direction $\alpha \in \Q$ such that all regular 
trajectories starting with direction $\alpha$ are periodic;
\item
there is a subset $P$ of $\Q$, dense in $\R$, such that 
no trajectory starting with direction in $P$ is periodic;
\item 
$\forall\,k \geq 1$, for a.e.\ $\theta$, $\displaystyle\limsup_{t \to \infty} \frac{\dist(\phi_{t}^{\theta}x,x)}
{\prod_{j=1}^k\log_{ j} t} = \infty$ almost surely.
\end{itemize}
\end{theo}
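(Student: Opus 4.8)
The plan is to translate the billiard into a linear flow on a translation surface and to measure displacement by an additive cocycle. Since the obstacles are rectangles, the billiard in $T_{a,b}$ unfolds to a linear flow $F^{\theta}$ on a noncompact translation surface $\widehat{X}_{a,b}$ built from four copies of the plane with obstacles removed; it is a $\Z^2$-cover of a compact translation surface $X_{a,b}$, and because $(a,b)\in\EEE'$ is rational, $X_{a,b}$ is \emph{square-tiled}, hence a Veech surface whose Veech group has finite index in $\SL(2,\Z)$. Recording which $\Z^2$-translate of a fundamental domain an $F^{\theta}$-orbit currently occupies defines an additive $\Z^2$-valued cocycle $\Psi^{\theta}_{t}(x)$, and $\dist(\phi^{\theta}_{t}x,x)\asymp\|\Psi^{\theta}_{t}(x)\|$ up to a bounded additive error. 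The central symmetry of the obstacle makes $\Psi$ have zero mean, and for a.e.\ $\theta$ the flow $F^{\theta}$ on the \emph{compact} surface $X_{a,b}$ is uniquely ergodic (Veech dichotomy, or Kerckhoff--Masur--Smillie); hence $\Psi^{\theta}_{t}$ is sublinear, and it suffices to exhibit, for a.e.\ $\theta$, a sequence $t_n\to\infty$ along which $\|\Psi^{\theta}_{t_n}(x)\|$ beats $\prod_{j=1}^{k}\log_{j}t_n$.

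The arithmetic input is a dense family of \emph{ballistic} rational directions. Here the parity condition defining $\EEE'$ is used exactly as in the first two items of the theorem: for the directions $\alpha\in P$, the surface $X_{a,b}$ decomposes into cylinders \emph{each of which lifts to $\widehat{X}_{a,b}$ with nonzero holonomy vector in $\Z^2$}; equivalently (this is the reformulation of ``no periodic trajectory in direction $P$''), every orbit of $F^{\alpha}$ on $\widehat{X}_{a,b}$ escapes to infinity at a positive linear speed.

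The last step interpolates between these ballistic directions and a generic $\theta$ by renormalization. Running the $\SL(2,\R)$-action (equivalently, a continued fraction algorithm adapted to the Veech group, comparable to the ordinary one since the group has finite index in $\SL(2,\Z)$), at stage $n$ the direction $\theta$ is close to a rational direction, and an $F^{\theta}$-orbit that has just returned to an $n$-th renormalization cross-section then runs $\asymp m_n$ times around a ballistic cylinder before the next return, where $m_n=m_n(\theta)$ is the $n$-th partial quotient; during this excursion it reaches $\Z^2$-displacement $\gtrsim m_n$, while the real time elapsed is $e^{O(n)}$, so $n\asymp\log t_n$ (L\'evy's theorem / the ergodic theorem along the renormalization cocycle). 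Now invoke the Borel--Bernstein theorem: since $\sum_{n}\bigl(n\textstyle\prod_{j=1}^{k}\log_{j}n\bigr)^{-1}=\infty$, for a.e.\ $\theta$ one has $m_n(\theta)>n\prod_{j=1}^{k}\log_{j}n$ for infinitely many $n$. For those $n$, with $t_n$ the time at the end of the corresponding excursion, the orbit visits a point at distance $\gtrsim m_n$ from $x$, so
\[
\frac{\dist(\phi^{\theta}_{t_n}x,x)}{\prod_{j=1}^{k}\log_{j}t_n}\;\gtrsim\;\frac{m_n}{\prod_{j=1}^{k}\log_{j}t_n}\;\asymp\;\frac{(\log t_n)\prod_{j=1}^{k}\log_{j}(\log t_n)}{\prod_{j=1}^{k}\log_{j}t_n}\;=\;\log_{k+1}t_n\;\longrightarrow\;\infty .
\]
This holds almost surely in $x$ (by minimality of $F^{\theta}$), which is the assertion.

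The main obstacle is the quantitative shadowing underlying the third step. One must show that when $\theta$ approximates a ballistic rational direction to within a partial quotient $m_n$, a typical orbit genuinely tracks a cylinder of \emph{nonzero} holonomy for $\asymp m_n$ turns and accumulates a displacement of this order that is not cancelled by the orbit's prior excursions --- and, crucially, that the large partial quotients can be made to occur at stages whose convergent direction is ballistic (this last point is most cleanly arranged by renormalizing toward a single fixed ballistic cusp direction when the Veech group permits, or otherwise by a conditional Borel--Cantelli argument using that non-ballistic rational directions are sparse). This requires uniform control of how $F^{\theta}$-orbits follow the cylinder decompositions of nearby directions on the $\Z^2$-cover. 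The auxiliary combinatorial point --- that the $\EEE'$ parity forces the cylinder holonomies not to cancel --- is the same mechanism that yields the periodic-direction statements and can be reused.
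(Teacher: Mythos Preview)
Your proposal addresses only the third item; the first two items (``no completely periodic rational direction'' and ``a dense set $P$ of rationals with no periodic orbit'') are asserted but not argued. In the paper these require real work: one shows that for $(a,b)\in\EEE'$ the Weierstrass points $E,F$ of the genus-$2$ quotient $Y_{a,b}$ are integer points, so any cylinder whose waist line contains $E$ or $F$ has even area; since the total area is odd, in every rational direction some cylinder has two of $A,B,C$ on its waist line, and a symmetry lemma (Lemma~\ref{lem:ABC-strip}) forces that cylinder to lift to an infinite strip on $X_{a,b}$. One-cylinder directions then give the dense set $P'$ with \emph{no} periodic orbits.

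For the third item your scheme --- approximate $\theta$ by escaping rational directions and shadow a strip --- is exactly the paper's, but the execution has a genuine gap that you yourself flag. Borel--Bernstein controls the ordinary partial quotients $m_n(\theta)$, but the convergents $p_n/q_n$ are arbitrary rationals; you give no mechanism guaranteeing that the large-$m_n$ convergents land in (or near) the ballistic set $P$, and the suggestion that ``non-ballistic directions are sparse'' is false here (they form full cusps of the Veech group as well). The paper bypasses this entirely by invoking the Khinchin--Sullivan theorem for the cusp of one-cylinder directions $P'$ in the finite-index Veech group: choosing $\phi(q)=\varepsilon(q)/(q\,f_k(q))$ with $f_k=\prod_{j=1}^k\log_j$ and $\sum\phi(q)=\infty$, one gets, for a.e.\ $\theta$, infinitely many $p_n/q_n\in P'$ with $|q_n\theta-p_n|\le\phi(q_n)$. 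Since $P'$ consists of \emph{one-cylinder} directions, the whole surface is a single strip in direction $p_n/q_n$, so every orbit tracks it --- there is no issue of landing in a zero-holonomy cylinder and no cancellation between cylinders to worry about.

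Two smaller points. First, ``this holds almost surely in $x$ (by minimality)'' is not an argument: minimality gives no quantitative control on which $x$ stay in the strip for the required number of laps. The paper instead bounds explicitly the measure of the bad set on $\partial R$ (points leaving the strip before $f_k(q_n)$ laps), obtaining a bound $\lesssim\varepsilon_n\to 0$, and this is what yields the a.e.\ statement. Second, asking for displacement $\asymp m_n$ (the full partial quotient) is too greedy: the bad-set estimate then does not tend to zero. The paper only asks for displacement $f_k(q_n)$ after time $\asymp f_k(q_n)\,q_n$, which is exactly enough for the $\limsup$ and makes the measure estimate work.
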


As a corollary, we obtain a result for a dense $G_{\delta}$ of
parameters in $\XXX$:

\begin{coro}
\label{G-delta}
Consider any closed $\YYY \subset \XXX$ for which $\EEE \cap \YYY$ and $\EEE'
\cap \YYY$ are dense in
$\YYY$.  Then there is a residual set $G \subset \YYY$ such that,
for each  $(a,b) \in G$,
\begin{itemize}
\item the billiard flow on
$T_{a,b}$ is recurrent;
\item
the set of periodic points is dense in the phase space of $T_{a,b}$;
\item 
$\forall\,k \geq 1$, for a.e.\ $\theta$, $\displaystyle\limsup_{t \to \infty} \frac{\dist(\phi_{t}^{\theta}x,x)}
{\prod_{j=1}^k\log_{ j} t} = \infty$ almost surely.
\end{itemize}
\end{coro}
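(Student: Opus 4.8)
The plan is to deduce Corollary \ref{G-delta} from Theorems \ref{billiard} and \ref{thm:billiard-other-parity} by a Baire category argument, treating the three asserted properties as the limits of properties that hold on the dense sets $\EEE\cap\YYY$ and $\EEE'\cap\YYY$ respectively. The main point is that each of the three properties ought to be a \emph{countable intersection of open conditions} on the parameter $(a,b)\in\YYY$, so that the set of parameters enjoying all of them is residual in $\YYY$ by Baire; here $\YYY$ is closed in $\XXX$, hence a Baire space.

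First I would set up the three target $G_\delta$ sets. For recurrence: fix a countable dense set of directions $\theta$ and a countable basis $\{U_n\}$ of the phase space, and for each triple $(\theta,U_n,T_0)$ consider the set $R(\theta,U_n,T_0)$ of parameters for which \emph{some} point of $U_n$ returns to $U_n$ after time $T_0$ in direction $\theta$; continuity of the billiard flow in the parameter $(a,b)$ (away from the measure-zero set of parameters producing corner hits, which one handles by a slight thickening of $U_n$) shows $R(\theta,U_n,T_0)$ is open, and on $\EEE\cap\YYY$ recurrence from Theorem \ref{billiard} forces $(a,b)$ to lie in all of them; intersecting over the countable index set gives a dense $G_\delta$ on which recurrence holds. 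For density of periodic points: for each basic open set $U_n$ in phase space, let $Q_n$ be the set of parameters admitting a periodic billiard orbit meeting $U_n$; a periodic orbit is a stable configuration under small perturbation of $(a,b)$ (it persists as long as no new obstacle collision is created, which is an open condition), so $Q_n$ is open, and Theorem \ref{billiard} — via the dense set $P$ of periodic directions — shows every $(a,b)\in\EEE\cap\YYY$ lies in $Q_n$; density of $\EEE\cap\YYY$ then gives $Q_n$ dense open, and $\bigcap_n Q_n$ is the desired residual set. For the diffusion lower bound: for fixed $k$, a countable dense set of directions $\theta$, and rationals $M\in\N$ and $T_0$, let $D(k,\theta,M,T_0)$ be the set of parameters for which there is a point $x$ and a time $t>T_0$ with $\dist(\phi_t^\theta x,x) > M\prod_{j=1}^k\log_j t$; again this is an open condition (the inequality is open and the trajectory depends continuously on $(a,b)$), Theorem \ref{thm:billiard-other-parity} puts every $(a,b)\in\EEE'\cap\YYY$ inside it, and the countable intersection over $(\theta,M,T_0)$, reorganized with an ``almost every $\theta$'' quantifier, recovers the $\limsup=\infty$ statement on a residual set.

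Having produced three dense $G_\delta$ subsets of $\YYY$, I would take $G$ to be their intersection, which is still residual since $\YYY$ is a Baire space; by construction every $(a,b)\in G$ satisfies all three conclusions. One subtlety to record carefully is the phrasing ``a.e.\ $\theta$'' and ``almost surely'' in the third bullet: the openness arguments naturally give, for each fixed countable family of directions, a residual parameter set, and one must check that intersecting over a countable dense set of directions and then invoking the measure-theoretic statement of Theorem \ref{thm:billiard-other-parity} (which already holds for a.e.\ $\theta$ and for the natural phase volume) is enough — this is where the hypothesis that the bad set of directions in Theorem \ref{thm:billiard-other-parity} can be taken independent of the parameter is used, exactly as the remark following Theorem \ref{billiard} allows for recurrence.

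The step I expect to be the main obstacle is establishing the \textbf{openness} of the periodic-orbit and recurrence conditions rigorously, i.e.\ the continuous (indeed locally ``rigid'') dependence of individual billiard trajectories on the obstacle dimensions $(a,b)$. A periodic or returning trajectory is a finite sequence of segments and reflections, and one must argue that for $(a',b')$ sufficiently close to $(a,b)$ the same combinatorial sequence of collisions persists, with the trajectory moving continuously; the danger is a trajectory that grazes a corner or becomes tangent to an obstacle side, but such trajectories are non-regular or can be excluded by passing to the regular orbits guaranteed in Theorems \ref{billiard}--\ref{thm:billiard-other-parity} and thickening the target neighborhoods slightly. Once this local stability is in hand, the Baire argument is routine bookkeeping.
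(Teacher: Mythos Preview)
Your Baire scaffolding is the right framework, and the periodic-point part is essentially correct and close to what the paper does (the paper isolates the stability of a single periodic orbit under perturbation of $(a,b)$ as a separate lemma). But the recurrence and diffusion parts have a genuine gap.

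The open conditions you write down are \emph{existential}: $R(\theta,U_n,T_0)$ asks that \emph{some} point of $U_n$ return, and $D(k,\theta,M,T_0)$ asks that \emph{some} point diffuse fast at some time. A countable intersection of such conditions can at best tell you that recurrent (resp.\ diffusive) points are dense in phase space, not that they have full measure; the conclusions of the corollary are measure-theoretic (``a.e.\ $\theta$'', ``almost surely''), and existential open conditions cannot recover them. Relatedly, fixing a countable dense set of directions and hoping to upgrade to ``a.e.\ $\theta$'' afterwards does not work: once the Baire argument is done, the parameter $(a',b')$ is no longer in $\EEE$ or $\EEE'$, so Theorems~\ref{billiard} and~\ref{thm:billiard-other-parity} no longer apply to it, and there is no remark saying the exceptional set of directions can be taken independent of a \emph{general} parameter (the remark after Theorem~\ref{billiard} only covers the countable set $\EEE$).

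The paper repairs this by making the open conditions \emph{quantitative in measure} and by building the direction into the phase variable rather than treating it separately. It works on the finite-measure cross-section $V_{a,b}$ (boundary of the base rectangle times inward unit vectors), and shows that for each $(a,b)\in\EEE\cap\YYY$ there is an open $O(a,b,n)$ such that for every $(a',b')\in O(a,b,n)$ a proportion at least $1-\e_n$ of $V_{a',b'}$ recurs to $V_{a',b'}$ and the periodic points are $\e_n$-dense there. The mechanism is a sharpened version of your stability idea: first use Theorem~\ref{billiard} to bound the return time by some $N$ on all but $\e_n$ of $V_{a,b}$; then excise a further $\e_n$-small set of points whose length-$N$ orbit comes too close to a corner; on the remainder the collision combinatorics is locked for all nearby parameters, so the same points recur for $(a',b')$. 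Taking $\bigcap_m\bigcup_{n\ge m} O_n$ gives full-measure recurrence to the base rectangle on a residual set, and then Poincar\'e recurrence plus Fubini (and $\Z^2$-periodicity) finishes. The diffusion item is handled identically with $\EEE'$. So the missing idea in your proposal is to replace ``some point'' by ``a $(1-\e_n)$-proportion of points, with a uniform time bound $N$'', and to work on the full phase cross-section instead of direction by direction.
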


The results of Theorems \ref{billiard} and
\ref{thm:billiard-other-parity} can be rephrased (and sharpened) in
terms of translation surfaces.  By a standard construction consisting
in unfolding the trajectories, the billiard flow in a given direction
is replaced by a linear flow on a translation surface $X_{a,b}$ (of
infinite area).  This translation surface of infinite area is a
$\Z^2$-covering of a finite-degree covering of a (finite-area) compact
translation surface $Y_{a,b}$ of genus $2$ with one singular point,
which is square-tiled when $a$ and $b$ are rational.

We use the same notation $\phi_t^\theta$ for the linear flow in 
direction $\theta$ on $X_{a,b}$ or $Y_{a,b}$ as for the billiard flow
in direction $\theta$ on $T_{a,b}$.
The flow $\phi_t^\theta$, or the direction $\theta$, is called
\emph{completely periodic} if every regular trajectory in direction
$\theta$ is closed; note that a given direction could be completely
periodic for $Y_{a,b}$ while not for $X_{a,b}$.
The flow $\phi_t^\theta$, or the direction $\theta$, is called
\emph{strongly parabolic} if in addition $X_{a,b}$ decomposes into
an infinite number of cylinders isometric to each other.

\begin{theo}
\label{main}
For rectangular obstacles with dimensions $(a,b)\in\EEE$:
\begin{itemize}
\item
the set $P \subset \Q$ of strongly parabolic 
directions for $X_{a,b}$ is dense in $\R$.
\item
in almost every direction, the linear flow on $X_{a,b}$ is
recurrent.
\end{itemize}
\end{theo}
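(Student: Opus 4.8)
The plan is to realize $X_{a,b}$ concretely as a $\Z^2$-cover of a finite square-tiled surface and then exploit the lattice (Veech) property of that finite surface. First I would carry out the unfolding construction in detail: reflecting the billiard table $T_{a,b}$ in the horizontal and vertical axes produces four copies glued along their boundaries, and the resulting translation surface $X_{a,b}$ is periodic under the $\Z^2$ action coming from the lattice of obstacles. Passing to the quotient by this $\Z^2$ gives a compact translation surface $Y_{a,b}$; a short Euler-characteristic / Gauss--Bonnet computation (counting the cone angle coming from the obstacle corners, which is $6\pi$ once the four reflected copies are assembled) shows $Y_{a,b}$ has genus $2$ with a single singularity, and when $a=p/q$, $b=r/s$ it is tiled by squares of side $1/(qs)$ (or a suitable common refinement), hence square-tiled. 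The parity conditions defining $\EEE$ are exactly what is needed for the gluings to produce a connected surface with the stated cone angle and for the relevant homology class (the one recording the $\Z^2$-displacement) to behave well; I would check these conditions explicitly on the cut-and-paste picture.

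The second ingredient is the theory of square-tiled surfaces: $Y_{a,b}$ is a $\mathrm{SL}(2,\Z)$-covering of the standard torus, so its Veech group is a finite-index subgroup of $\mathrm{SL}(2,\Z)$, in particular a lattice, and by the Veech dichotomy every rational direction (every direction fixed by a parabolic in the Veech group) is completely periodic on $Y_{a,b}$, with $Y_{a,b}$ decomposing into finitely many cylinders. For each such direction I would compute the holonomy vectors of these cylinders and the associated "displacement" cocycle valued in $\Z^2$ describing how the cylinders lift to $X_{a,b}$; the direction is strongly parabolic for $X_{a,b}$ precisely when each finite cylinder lifts to an infinite chain of isometric cylinders, i.e.\ when the displacement is nonzero and the moduli match up. One then checks that the set of rational directions for which this happens is invariant under the (parabolic elements of the) Veech group and nonempty, hence — since the Veech group acts on $\mathbb{P}^1(\Q)$ with dense orbits — dense in $\R$. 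This gives the first bullet, with $P$ the set of such directions.

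For the second bullet, recurrence in almost every direction, the strategy is to combine the $\mathrm{SL}(2,\Z)$-structure with an essential-value / ergodic-theoretic argument for the $\Z^2$-valued cocycle over the geodesic flow on the finite surface $Y_{a,b}$. Concretely, $X_{a,b} \to Y_{a,b}$ is a $\Z^2$-cover, so the linear flow on $X_{a,b}$ in direction $\theta$ is a $\Z^2$-extension of the (uniquely ergodic, for a.e.\ $\theta$) flow on $Y_{a,b}$; recurrence of the extension follows if the cocycle has trivial asymptotic cycle (its integral against the Masur--Veech measure vanishes, which one arranges by symmetry, since the central-symmetry involution of the wind-tree table negates the $\Z^2$-displacement). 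I would then invoke the standard fact that a $\Z^2$-extension of a uniquely ergodic flow with zero asymptotic cycle is conservative/recurrent for a.e.\ parameter — using here either the Schmidt--Conze recurrence criterion together with ergodicity of the Teichmüller geodesic flow on the $\mathrm{SL}(2,\Z)$-orbit, or a direct second-moment estimate on the deviations of the cocycle (sublinear, in fact logarithmic, growth of ergodic sums by Zorich-type results, which is far more than enough for recurrence). Because $\EEE$ is countable, a single full-measure set of directions works for all $(a,b)\in\EEE$ simultaneously, as remarked after Theorem \ref{billiard}.

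The main obstacle I expect is the recurrence statement: identifying $X_{a,b}$ as a $\Z^2$-cover and getting complete periodicity from Veech's theorem are essentially bookkeeping, but proving a.e.-recurrence of the $\Z^2$-cocycle requires genuine input — one must show the cocycle is not a coboundary in any direction (non-degeneracy, so that the cover is really "two-dimensional" and one cannot reduce to a $\Z$-cover where recurrence could fail) and control its growth, which is where the finer ergodic theory of the Teichmüller flow (Kontsevich--Zorich cocycle, deviation spectrum) enters. The cleanest route is probably to quote an existing recurrence criterion for periodic translation surfaces of this type rather than redo the deviation estimates from scratch.
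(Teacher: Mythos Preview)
Your outline gets the broad architecture right (realize $X_{a,b}$ as a $\Z^2$-type cover of a genus-$2$ square-tiled surface $Y_{a,b}$, exploit the Veech group for density), but the mechanism you propose diverges from the paper's at both key steps, and in each case your substitute has a real gap.

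\textbf{First bullet.} You assert that the parity conditions in $\EEE$ are ``exactly what is needed for the gluings to produce a connected surface with the stated cone angle''. That is not where the parity enters: $Y_{a,b}$ is an L-shaped surface in $\HHH(2)$ for \emph{every} rational $(a,b)$, connected with a single $6\pi$ cone point. The parity conditions instead control the positions of the six Weierstrass points of $Y_{a,b}$ relative to the square tiling; for $(a,b)\in\EEE$ the surface lands in McMullen's ``orbit A'' (one integer Weierstrass point), and Kani's invariant then forces the two distinguished Weierstrass points $E$, $F$ (midpoints of the short sides of the L) to lie on the waist curve of \emph{every} one-cylinder decomposition. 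The paper's Lemma~8 shows, by a direct symmetry argument on the billiard table using $E$ and $F$ as centers of symmetry, that such ``good'' one-cylinder directions lift to strongly parabolic directions on $X_{a,b}$ with period exactly doubled. Your ``displacement cocycle'' language is on the wrong track here (and note: you want the displacement to \emph{vanish} after a finite cover, not to be nonzero --- nonzero displacement gives strips, not cylinders). More seriously, you never explain how to produce a single strongly parabolic direction; ``one then checks'' hides the entire content of Lemmas~7 and~8.

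\textbf{Second bullet.} Here your route and the paper's are completely different. The paper does \emph{not} use essential-value theory, the Kontsevich--Zorich cocycle, or any recurrence criterion for $\Z^2$-extensions. Instead it leverages the first bullet directly: the set $P$ of strongly parabolic directions contains a cusp of a finite-index subgroup of $\SL_2(\Z)$, so by Sullivan's theorem almost every $\theta$ is approximated by $p_n/q_n\in P$ with $|\theta - p_n/q_n| \leq \e_n/q_n^2$. Since cylinders on $X_{a,b}$ in direction $p_n/q_n$ have length $O(q_n)$ and height $\gtrsim 1/q_n$ (inherited from $Y_{a,b}$ with the factor~$2$), a direct geometric estimate shows that all but an $O(\e_n)$-proportion of points on the boundary of a fixed obstacle, flowed in direction~$\theta$, return to that obstacle before leaving the approximating cylinder. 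Letting $\e_n\to 0$ and applying Poincar\'e recurrence finishes the argument.

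Your proposed route is not obviously wrong, but the ``standard fact that a $\Z^2$-extension of a uniquely ergodic flow with zero asymptotic cycle is conservative'' is \emph{not} a standard fact; Atkinson/Schmidt--Conze criteria are for $\Z$-valued cocycles, and in rank~$2$ one needs genuine quantitative control (as you correctly flag at the end). The paper's approach sidesteps all of this by making the strongly parabolic directions do double duty --- they are not just the content of the first statement but the engine for the second.
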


\begin{remk}
\label{remk:a=b=1/2}
When $a = b = 1/2$, strongly parabolic directions are exactly the
rational directions $u/v$ such that $u$ and $v$ are both odd, and 
they are the only completely periodic directions.
\end{remk}

\begin{theo}
\label{remk:parity}
For rectangular obstacles with dimensions $(a,b)\in\EEE'$:
\begin{itemize}
\item
there are no completely periodic directions on $X_{a,b}$;
\item
there is a subset $P'$ of $\Q$, dense in $\R$, of directions in which
the linear flow on $Y_{a,b}$ is completely periodic with one cylinder,
and thus there are no periodic orbits on $X_{a,b}$ in these 
directions;
\item 
on $X_{a,b}$, $\forall\,k \geq 1$, for a.e.\ $\theta$, $\displaystyle \limsup_{t \to \infty}
\frac{\dist(\phi_{t}^{\theta}x,x)}{\prod_{j=1}^k\log_{ j} t} = \infty$
almost surely.
\end{itemize}
\end{theo}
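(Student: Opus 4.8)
The plan is to argue entirely on the translation surfaces, running parallel to (but with the two parities exchanged relative to) the proofs of Theorems~\ref{billiard} and~\ref{main}. Recall that $X_{a,b}$ is a $\Z^2$-cover of a finite cover $\hat Y_{a,b}$ of the square-tiled surface $Y_{a,b}\in\HHH(2)$, and write $\psi\colon H_1(Y_{a,b},\Z)\to\Z^2$ for the ``cell-displacement'' holonomy cocycle of that cover. I will use two standing facts: (i) since $Y_{a,b}$ is square-tiled, hence a Veech surface, a direction carries a closed regular orbit on $Y_{a,b}$ --- equivalently is completely periodic there --- exactly when it is rational, because any closed orbit projects to a closed geodesic of the underlying torus; and (ii) a cylinder $C$ of a completely periodic direction on $Y_{a,b}$ lifts to closed orbits on $X_{a,b}$ if and only if its core curve $c$ has $\psi(c)=0$ and trivial monodromy in $\hat Y_{a,b}\to Y_{a,b}$.

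\emph{First bullet.} By (i) no irrational direction is completely periodic on $Y_{a,b}$, hence none is on $X_{a,b}$. For a rational direction, $Y_{a,b}$ decomposes into cylinders $C_1,\dots,C_m$, and by (ii) it suffices to produce one $i$ with $\psi(\mathrm{core}\,C_i)\ne 0$. I would do this by an explicit computation on the square-tiling of $Y_{a,b}$ attached to $(a,b)=(p/q,r/s)$: describe the cylinders and their core classes in terms of the direction and of $p,q,r,s$, evaluate $\psi$ on them, and check that the parity constraints defining $\EEE'$ ($p,r$ even, $q,s$ odd) force $\psi(\mathrm{core}\,C_i)\not\equiv 0\pmod{2}$ for at least one $i$ --- whereas for $\EEE$ the analogous computation puts every core in $\ker\psi$, which is the mechanism behind Remark~\ref{remk:a=b=1/2}. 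I expect this parity bookkeeping to be the technical point of the bullet; everything else is formal.

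\emph{Second bullet.} First exhibit one rational direction $\theta_0$ in which $Y_{a,b}$ is a single cylinder $C_0$ (one-cylinder cusps exist for square-tiled surfaces in $\HHH(2)$ and are read off the tiling), and apply the computation above to get $\psi(\mathrm{core}\,C_0)\ne 0$. A finite-index subgroup of the Veech group of $Y_{a,b}$ lifts to $X_{a,b}$, acting on the deck group $\Z^2$ through $\GL_2(\Z)$ and hence preserving $\ker\psi$; so every image of $\theta_0$ under this subgroup is again a one-cylinder direction with core outside $\ker\psi$. Since $Y_{a,b}$ is square-tiled this subgroup is a lattice, so the orbit of the cusp $\theta_0$ is dense in $\R$; take $P'$ to be this orbit. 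For $\theta\in P'$ the monodromy of $\mathrm{core}\,C_0$ has infinite order (its $\Z^2$-part is $\psi(\mathrm{core}\,C_0)\ne 0$), so each connected component of the preimage of $C_0$ in $X_{a,b}$ is a bi-infinite strip, on which $\phi_t^\theta$ is a ballistic translation: no closed orbit, and $\dist(\phi_t^\theta x,x)$ grows linearly in $t$. Unfolding yields the corresponding billiard statement.

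\emph{Third bullet}, which I regard as the main obstacle. The idea is to renormalize $\phi_t^\theta$ on $X_{a,b}$ through the Teichm\"uller geodesic flow on the (finite-volume, hyperbolic) Teichm\"uller curve $\CCC$ of the Veech surface $Y_{a,b}$, carrying along the cocycle $\psi$, and to feed it suitably deep cusp excursions. For a.e.\ $\theta$ the geodesic ray ending at $\theta$ is generic: it visits the cusps of $\CCC$ at a linear rate, and by a Borel--Cantelli argument for the ergodic geodesic flow on $\CCC$ it makes suitably deep excursions into a one-cylinder cusp infinitely often. This produces a sequence of ``convergents'' $\theta_n\to\theta$ with $\theta_n\in P'$, where $Y_{a,b}$ in direction $\theta_n$ is a single cylinder $C_n$ of core length $\ell_n$ and height $H_n$ (so $H_n\ell_n=\mathrm{area}(Y_{a,b})$) with $\psi(\mathrm{core}\,C_n)\ne 0$, and with $|\theta-\theta_n|\asymp 1/(\ell_n\ell_{n+1})$. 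For $x$ off the boundary leaves and $0\le t\le T_n$ with $T_n\asymp H_n/|\theta-\theta_n|\asymp\ell_{n+1}$, the orbit $\phi_t^\theta x$ remains in a bi-infinite strip over $C_n$ and shadows the linearly drifting orbit $\phi_t^{\theta_n}x$, wrapping $\asymp T_n/\ell_n\asymp\ell_{n+1}/\ell_n$ times around $C_n$, each wrap displacing it in $X_{a,b}$ by $\psi(\mathrm{core}\,C_n)$; hence $\dist(\phi_{T_n}^\theta x,x)\gtrsim\ell_{n+1}/\ell_n$. Since $\log\ell_n$ grows linearly in $n$ for a.e.\ $\theta$ (the ergodic theorem for the renormalization cocycle), $\prod_{j=1}^k\log_j T_n\asymp n\prod_{j=1}^{k-1}\log_j n$; and because $\sum_n\bigl(n\prod_{j=1}^{k-1}\log_j n\bigr)^{-1}$ diverges for every $k$, a Borel--Cantelli argument for the renormalization dynamics gives, for a.e.\ $\theta$ and every $k$ and $M$, infinitely many $n$ with $\ell_{n+1}/\ell_n\ge M\prod_{j=1}^k\log_j T_n$; intersecting over the countably many $(k,M)$, together with a further Borel--Cantelli over $n$ for the full-measure set of good $x$, yields the claimed $\limsup$. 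The real work is the quantitative shadowing inside a long thin cylinder and the distributional input for the renormalization cocycle on $\CCC$ (heavy tails plus enough mixing to run Borel--Cantelli), which together make every finite tower of iterated logarithms beatable.
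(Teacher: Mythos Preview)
Your first bullet is not a proof but a promissory note. You say you ``would do this by an explicit computation'' of $\psi$ on cylinder cores and check a parity condition, but the difficulty is precisely that you must handle \emph{every} rational direction, and the cylinder decomposition of $Y_{a,b}$ in an arbitrary rational direction is not something you can write down uniformly in terms of $p,q,r,s$ and the slope. The paper avoids this entirely by exploiting the hyperelliptic structure: in $\HHH(2)$ each cylinder's waist curve carries exactly two of the six Weierstrass points, and there are at most two cylinders. Since $p,r$ are even, $E$ and $F$ are integer Weierstrass points, so any cylinder whose waist contains one of them has even area; but the total area $qs-pr$ is odd, which rules out $E$ (or $F$) on the waist of a single cylinder, and rules out $E,F$ on distinct waists of a two-cylinder decomposition. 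Hence some cylinder always has two of $A,B,C$ on its waist, and Lemma~\ref{lem:ABC-strip} (the symmetry argument about centers of symmetry of $T_{a,b}$) gives a strip on $X_{a,b}$. This Weierstrass-point bookkeeping is the missing idea; your mod-2 holonomy calculation, if it can be made to work, would have to recover it.

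Your third bullet is a plausible renormalization heuristic, but it is both far heavier than needed and, as you acknowledge, incomplete (``the real work is \ldots\ the distributional input \ldots\ heavy tails plus enough mixing to run Borel--Cantelli''). The paper sidesteps all of this: once $P'$ is known to contain a cusp of the Veech group of $Y_{a,b}$, the Khinchin--Sullivan theorem applied with $\phi(q)=\bigl(q\,f_k(q)\,\log_{k+1}q\bigr)^{-1}$ (which has divergent sum) gives, for a.e.\ $\theta$, infinitely many $p_n/q_n\in P'$ with $|\theta-p_n/q_n|\le \e_n/(q_n^2 f_k(q_n))$. A direct geometric estimate on the strip (Lemma~\ref{lem:cylinder-bounds} plus iterating the one-period drift $f_k(q_n)$ times) then yields $\dist(\phi^\theta_{mKq_n}x,x)\ge f_k(q_n)$ for $x$ outside a set of measure $O(\e_n)$. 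No ergodic theory of the Teichm\"uller flow, no cocycle asymptotics, and no second Borel--Cantelli are required; the only black box is Khinchin--Sullivan.
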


\begin{remk}
\label{remk:prod-log-j}
In Theorem 2, Corollary 3 and Theorem 6, we suspect that the function
$\prod_{j = 1}^k \log_j$ could be replaced by a faster-growing
function, such as possibly $t\mapsto t^\lambda$ for a suitable
$\lambda$.
\end{remk}

\subsection{Reader's guide}

Our approach is based on results on compact translation surfaces.  The
fundamental paper of Veech \cite{Ve} was the inspiration for many
papers about Veech surfaces and (compact) square-tiled surfaces, for
which the situation is particularly well understood in genus two.

If $(a, b) \in \EEE$, we remark that some `good' one-cylinder
directions on $Y_{a,b}$ are strongly parabolic directions for
$X_{a,b}$.  This is our main statement.  This relies on a careful
study of the Weierstrass points on $Y_{a,b}$ developed by McMullen
(see \cite{Mc}).  From there, good Diophantine approximation of almost
every irrational number by these periodic directions (due to the fact
that the set of good one-cylinder directions contains the orbit of a
cusp of a Fuchsian group of finite covolume) implies the recurrence of
the flow for almost every direction when $(a,b) \in \EEE$.  Similar
arguments hold when $(a,b)\in\EEE'$.  Using a standard trick in
billiard theory, we get Corollary \ref{G-delta}.

\subsection{Acknowledgments}

We thank C.~McMullen for showing us how to give a good definition of
the set $\EEE$.  We thank F.~Paulin who explained to us the part
concerning diophantine approximation.  We also thank F.~Valdez for
helpful discussions concerning the definition of a non compact
translation surface.

The first and second authors are partially supported by Projet blanc
ANR: ANR-06-BLAN-0038.

\section{Background}

\subsection{Translation surfaces} 

In this section, we briefly introduce the basic notions of
Teichmüller dynamics.  For more on translation surfaces, see for
instance \cite{MaTa}, \cite{Vi}, \cite{Zo}.

A surface is called a \emph{translation surface} if it can be obtained
by edge-to-edge gluing of polygons in the plane, only using translations
(the polygons need not be compact or finitely many but should be
at most countably many, and the lengths of sides and diagonals should
be bounded away from zero).  The translation structure induces a flat
metric with conical singularities.  Note that if the gluings produce
an infinite angle at some vertex, no neighborhood of this vertex is
homeomorphic to an open set in the plane, so that with this point the
resulting object cannot properly be called a surface; however this
does not occur in this paper. A discussion about singularities on
 non compact translation surfaces can be found in 
\cite{Bo}.

A \emph{saddle connection} is a geodesic segment for the flat metric
starting and ending at a singularity, and containing no singularity in
its interior.

A \emph{cylinder} on a translation surface is a maximal connected union
of homotopic simple closed geodesics.  If the surface is compact and
the genus is greater than one then every cylinder is bounded by saddle
connections.  A cylinder has a length (or circumference) $c$ and a
height $h$.  The \emph{modulus} of a cylinder is $\mu=h/c$.

On a noncompact translation surface, a \emph{strip} is a maximal
connected union of parallel biinfinite geodesics.

On a compact translation surface, a direction $\theta$ is called {\em
periodic} if the translation surface is the union of the closures of
cylinders in this direction, and \emph{parabolic} if moreover the
moduli of all the cylinders are commensurable.

On a translation surface, a direction $\theta$ is \emph{strongly
parabolic} if the surface is the union of closed cylinders in this
direction and the cylinders are isometric to each other.

\subsection{Square-tiled surfaces}
\label{section-squaretiled}

A \emph{square-tiled surface} is a translation surface obtained by
edge-to-edge gluing of unit squares.  Consequently it is a covering of
the torus $\R^2/\Z^2$ ramified only over the origin.  A square-tiled
surface is \emph{primitive} if the lattice generated by the vectors of
saddle connections is equal to $\Z^2$.  Abusing notations, we will
call square-tiled a translation surface obtained by gluing copies of a
fixed rectangle instead of squares.  We recall that on a compact
square-tiled surface, a direction has rational slope if and only if it
is periodic if and only if it is parabolic.  A theorem by Veech
implies that on a compact square-tiled surface, every irrational
direction is minimal and uniquely ergodic \cite{Ve}.

\subsection{Genus 2 surfaces, L-shaped surfaces}
\label{section-genustwo}

For finite area translation surfaces, the angles around the
singularities are multiples of $2\pi$.  The family of these integers
is the combinatorics of the surface.  The moduli space of translation
surfaces with fixed combinatorics is called a \emph{stratum}.  In genus
2, there are 2 strata.  One of them is made of surfaces with one
singularity of angle $6\pi$; it is called $\HHH(2)$.  The other one is
made of surfaces with two singularities of angle $4\pi$; it is called
$\HHH(1,1)$.  Every genus 2 translation surface is hyperelliptic.

An L-shaped surface $\Lsurf(\alpha, \beta, \gamma, \delta)$ is a
translation surface defined by 4 parameters: the lengths of the
horizontal saddle connections $\alpha$, $\gamma$ and the lengths of
the vertical saddle connections $\beta$, $\delta$, see figure
\ref{fig:L}, where segments with same labels are glued together.  An
L-shaped surface belongs to the stratum $\HHH(2)$.  Its six
Weierstrass points are depicted on figure \ref{fig:Wpts}.  When the
parameters $\alpha, \beta, \gamma, \delta$ are rational, it is a
square-tiled surface.

\begin{figure}[ht]
\begin{minipage}[ht]{0.48\linewidth}
\centering
\begin{tikzpicture}[scale=0.9]
  \draw [very thick] (0,0)
    -- node [below] {$\gamma$} ++(3,0)
    -- node [below] {$\alpha$} ++(2.5,0)
    -- node [right] {$\delta$} ++(0,1.75)
    -- node [above] {$\alpha$} ++(-2.5,0)
    -- node [right] {$\beta$} ++(0,1.25)
    -- node [above] {$\gamma$} ++(-3,0)
    -- node [left] {$\beta$} ++(0,-1.25)
    -- node [left] {$\delta$} (0,0)
    -- cycle;
  \draw[fill] (0,0) circle (2pt);
  \draw[fill] (3,0) circle (2pt);
  \draw[fill] (5.5,0) circle (2pt);
  \draw[fill] (5.5,1.75) circle (2pt);
  \draw[fill] (3,1.75) circle (2pt);
  \draw[fill] (3,3) circle (2pt);
  \draw[fill] (0,3) circle (2pt);
  \draw[fill] (0,1.75) circle (2pt);
\end{tikzpicture}
\caption{\mbox{L-shaped surface}}
\label{fig:L}
\end{minipage}
\begin{minipage}[ht]{0.48\linewidth}
\centering
\begin{tikzpicture}[scale=0.9]
  \draw [very thick] (0,0)
    -- ++(3,0)
    -- node [below] {\phantom{$\gamma$}} ++(2.5,0)
    -- ++(0,1.75)
    -- ++(-2.5,0)
    -- ++(0,1.25)
    -- ++(-3,0)
    -- ++(0,-1.25)
    -- cycle;
  \draw[fill] (0,0) circle (2pt) node [above right] {$D$};
  \draw[fill] (3,0) circle (2pt) node [above right] {$D$};
  \draw[fill] (5.5,0) circle (2pt) node [above right] {$D$};
  \draw[fill] (5.5,1.75) circle (2pt) node [above right] {$D$};
  \draw[fill] (3,1.75) circle (2pt) node [above right] {$D$};
  \draw[fill] (3,3) circle (2pt) node [above right] {$D$};
  \draw[fill] (0,3) circle (2pt) node [above right] {$D$};
  \draw[fill] (0,1.75) circle (2pt) node [above right] {$D$};
  \draw (4.25,0) circle (2.5pt) node [above right] {$E$};
  \draw (4.25,0.875) circle (2.5pt) node [above right] {$B$};
  \draw (4.25,1.75) circle (2.5pt) node [above right] {$E$};
  \draw (3,2.375) circle (2.5pt) node [above right] {$F$};
  \draw (1.5,2.375) circle (2.5pt) node [above right] {$C$};
  \draw (0,2.375) circle (2.5pt) node [above right] {$F$};
  \draw (1.5,0.875) circle (2.5pt) node [above right] {$A$};
\end{tikzpicture}
\caption{\mbox{Weierstrass points}}
\label{fig:Wpts}
\end{minipage}
\end{figure}

The position of Weierstrass points with respect to the squares that
tile the surface may be among the following: corners, centers,
midpoints of horizontal or vertical edges.  When a Weierstrass point
is located at a corner, we call it an \emph{integer Weierstrass
point}.

The action of $\SL_{2}(\Z)$ (see the following subsection) on
primitive square-tiled surfaces in $\HHH(2)$ preserves the number of
integer Weierstrass points.  This provides an invariant distinguishing
between $\SL_{2}(\Z)$ orbits of primitive square tiled surfaces in
$\HHH(2)$ (see \cite{HuLe}, \cite{Mc}).  If the number of squares is
odd and at least $5$, there are two orbits: in one orbit (orbit $A$)
the surfaces have 1 integer Weierstrass point (the singularity); in
the other one (orbit $B$), there are 3 integer Weierstrass points.
In the proof of Lemma \ref{1-cyl}, we will use an a priori finer
invariant introduced by Kani (see \cite{Ka1}, \cite{Ka2}).

\subsection{Veech groups}

Given any translation surface $(X, \omega)$, an \emph{affine
diffeomorphism} is an orientation preserving homeomorphism of $X$ that
permutes the singularities of the flat metric and acts affinely on the
polygons defining $X$.  The group of affine diffeomorphisms is denoted
by $\Aff(X,\omega)$.  The image of the derivation map
$$d : 
   \left\{
   \begin{matrix}
    \Aff(X,\omega) \to \GL_{2}(\R) \\
    f \mapsto df
    \end{matrix}
   \right.
$$
is called the \emph{Veech group}, and denoted by $\SL(X, \omega)$.  If
$(X,\omega)$ is a compact translation surface, then $\SL(X, \omega)$
is a Fuchsian group (discrete subgroup of $\SL_{2}(\R)$).  If
$(X,\omega)$ is a primitive square-tiled surface then its Veech group
is a subgroup of $\SL_{2}(\Z)$, for the following reason.  First of
all, an affine diffeomorphism sends a saddle connection to a saddle
connection, thus it sends the standard basis of $\R^2$ to vectors with
integer coordinates.  Thus, elements of $\SL(X, \omega)$ have integral
entries.  By the same reasoning, the inverse of such element has also
integer entries and thus $\SL(X, \omega)$ is a subgroup of
$\SL_{2}(\Z)$.

For surfaces in $\HHH(2)$, the affine group is isomorphic to the 
Veech group (see for instance proposition 4.4 in \cite{HuLe}).

\subsection{Construction of $X_{a,b}$}

Label each rectangular obstacle by the element of $\Z^2$ giving its
position.  Its four sides are $\ell$ (left), $r$ (right), $b$
(bottom), $t$ (top).  We recall the classical construction of a
translation surface from a polygonal billiard.  The idea is to unfold
the trajectories and to reflect the table.  To get the surface
$X_{a,b}$, we need four copies of the billiard table $T_{a,b}$ because
the angles of the table are multiples of $\pi/2$.  We label the copies
by $I$, $II$, $III$, $IV$.

\begin{figure}[ht]
\pgfdeclaresnake{diamondline}{initial}
{
\state{initial}[width=10pt]
{
\pgfpathlineto{\pgfpoint{5pt}{0pt}}
\pgfpathlineto{\pgfpoint{4pt}{0.75pt}}
\pgfpathlineto{\pgfpoint{3pt}{0pt}}
\pgfpathlineto{\pgfpoint{4pt}{-0.75pt}}
\pgfpathlineto{\pgfpoint{5pt}{0pt}}
}
\state{final}
{
\pgfpathlineto{\pgfpoint{\pgfsnakeremainingdistance}{0pt}}
}
}

\begin{tikzpicture}
  \tikzstyle{a line}=[very thick, dash pattern=on 8pt off 2pt];
  \tikzstyle{b line}=[very thick, dash pattern=on 5pt off 8pt];
  \tikzstyle{c line}=[very thick,
    snake=diamondline,line before snake=3.5mm];
  \tikzstyle{d line}=[very thick,
    dash pattern=on 5pt off 2pt on 1pt off 2pt on 1pt off 2pt];
  \tikzstyle{e line}=[line width=1pt,
    dash pattern=on 3pt off 2pt];
  \tikzstyle{f line}=[very thick,
    dash pattern=on 8pt off 2pt on 1pt off 2pt];
  \tikzstyle{g line}=[very thick,snake=zigzag,
    segment amplitude=.3mm,segment length=2.5mm,line around snake=1mm];
  \tikzstyle{h line}=[very thick,snake=saw,
    segment amplitude=.4mm,segment length=2mm,line around snake=1mm];
  \draw (-5.5,0) -- (5.5,0);
  \draw (0,-5.5) -- (0,5.5);
  \node at (-0.5,-0.5) {$I$};
  \node at (0.5,-0.5) {$II$};
  \node at (-0.5,0.5) {$III$};
  \node at (0.5,0.5) {$IV$};
  \begin{scope}[scale=2.5,xshift=-1.15cm,yshift=-1.15cm]
    \clip (-0.9,-0.9) rectangle +(1.8,1.8);
    \foreach \z in {(-1,-1),(-1,0),(-1,1),(0,-1),(0,1),(1,-1),(1,0),(1,1)}
      \draw[very thick] \z +(-0.3,-0.3) rectangle +(0.3,0.3);
    \node at (0,0) {$\vct{m}{n}$};
    \draw[a line] (-0.3,-0.3) -- +(0.6,0);
    \draw[b line] (0.3,-0.3) -- +(0,0.6);
    \draw[c line] (-0.3,0.3) -- +(0.6,0);
    \draw[d line] (-0.3,-0.3) -- +(0,0.6);
    \node at (-0.4,0) {$\ell$};
    \node at (0,-0.4) {$b$};
    \node at (0.4,0) {$r$};
    \node at (0,0.4) {$t$};
  \end{scope}
  \begin{scope}[scale=2.5,xshift=1.15cm,yshift=-1.15cm]
    \clip (-0.9,-0.9) rectangle +(1.8,1.8);
    \foreach \z in {(-1,-1),(-1,0),(-1,1),(0,-1),(0,1),(1,-1),(1,0),(1,1)}
      \draw[very thick] \z +(-0.3,-0.3) rectangle +(0.3,0.3);
    \node at (0,0) {$\vct{-m}{n}$};
    \draw[g line] (-0.3,-0.3) -- +(0.6,0);
    \draw[d line] (0.3,-0.3) -- +(0,0.6);
    \draw[h line] (-0.3,0.3) -- +(0.6,0);
    \draw[b line] (-0.3,-0.3) -- +(0,0.6);
    \node at (-0.4,0) {$\ell$};
    \node at (0,-0.4) {$b$};
    \node at (0.4,0) {$r$};
    \node at (0,0.4) {$t$};
  \end{scope}
  \begin{scope}[scale=2.5,xshift=-1.15cm,yshift=1.15cm]
    \clip (-0.9,-0.9) rectangle +(1.8,1.8);
    \foreach \z in {(-1,-1),(-1,0),(-1,1),(0,-1),(0,1),(1,-1),(1,0),(1,1)}
      \draw[very thick] \z +(-0.3,-0.3) rectangle +(0.3,0.3);
    \node at (0,0) {$\vct{m}{-n}$};
    \draw[c line] (-0.3,-0.3) -- +(0.6,0);
    \draw[e line] (0.3,-0.3) -- +(0,0.6);
    \draw[a line] (-0.3,0.3) -- +(0.6,0);
    \draw[f line] (-0.3,-0.3) -- +(0,0.6);
    \node at (-0.4,0) {$\ell$};
    \node at (0,-0.4) {$b$};
    \node at (0.4,0) {$r$};
    \node at (0,0.4) {$t$};
  \end{scope}
  \begin{scope}[scale=2.5,xshift=1.15cm,yshift=1.15cm]
    \clip (-0.9,-0.9) rectangle +(1.8,1.8);
    \foreach \z in {(-1,-1),(-1,0),(-1,1),(0,-1),(0,1),(1,-1),(1,0),(1,1)}
      \draw[very thick] \z +(-0.3,-0.3) rectangle +(0.3,0.3);
    \node at (0,0) {$\vct{-m}{-n}$};
    \draw[h line] (-0.3,-0.3) -- +(0.6,0);
    \draw[f line] (0.3,-0.3) -- +(0,0.6);
    \draw[g line] (-0.3,0.3) -- +(0.6,0);
    \draw[e line] (-0.3,-0.3) -- +(0,0.6);
    \node at (-0.4,0) {$\ell$};
    \node at (0,-0.4) {$b$};
    \node at (0.4,0) {$r$};
    \node at (0,0.4) {$t$};
  \end{scope}
\end{tikzpicture}
\caption{Side-pairings for $X_{a,b}$}
\label{fig:Xab-side-pairings}
\end{figure}
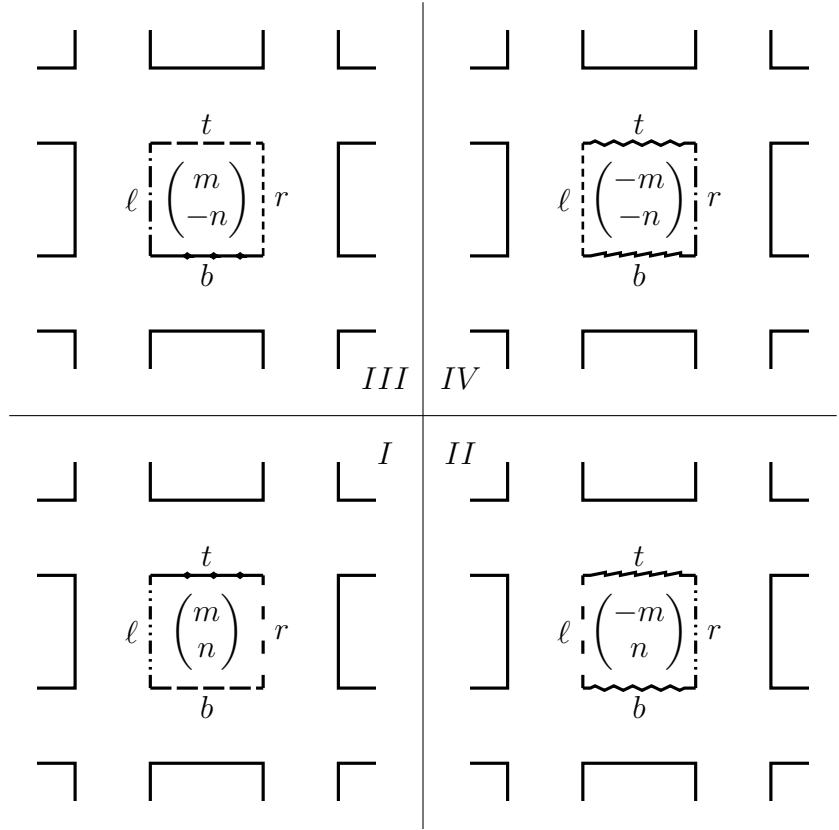

The following sides are glued together (see figure 
\ref{fig:Xab-side-pairings}), for all couples $(m,n)\in\Z^2$:
\begin{itemize}
\item
$(\ell, (m,n), I)$ is identified with $(r, (-m,n), II)$
\item
$(r, (m,n), I)$ is identified with $(\ell, (-m,n), II)$
\item
$(b, (m,n), I)$ is identified with $(t, (m,-n), III)$
\item
$(t, (m,n), I)$ is identified with $(b, (m,-n), III)$
\item
$(\ell, (m,n), III)$ is identified with $(r, (-m,n), IV)$
\item
$(r, (m,n), III)$ is identified with $(\ell, (-m,n), IV)$
\item
$(b, (m,n), II)$ is identified with $(t, (m,-n), IV)$
\item
$(t, (m,n), II)$ is identified with $(b, (m,-n), IV)$
\end{itemize}
The surface $X_{a,b}$ obtained this way has no boundary, infinite area
and infinite genus.

 
\section{Proof of Theorem \ref{main}}

We prove Theorem \ref{main} and note that Theorem \ref{billiard} follows 
immediately.

\subsection{Construction of a finite area square-tiled surface.}

Given $a, b \in (0,1)$, we construct the following (finite area)
translation surface.  A fundamental domain is $[0,1]\times [0,1]
\setminus [(1-a)/2,(1+a)/2]\times [(1-b)/2,(1+b)/2]$.  Parallel sides
of the same length are identified (see figure
\ref{fig:finite-area-surface}).  We denote this surface by $Y_{a,b}$.
$Y_{a,b}$ is a surface of genus two with one conical point with angle
$6\pi$.  It belongs to the stratum $\HHH(2)$.  $Y_{a,b}$ is a quotient
of $X_{a,b}$.  We denote by $\pi$ the projection from $X_{a,b}$ to
$Y_{a,b}$.  We are going to derive properties of the linear flows on
$X_{a,b}$ from results on square-tiled surfaces in $\HHH(2)$.  The
surface $Y_{a,b}$ is an L-shaped surface of parameters
$(1-a,a,1-b,b)$.  The 6 Weierstrass points are $A$, $B$, $C$, $D$,
$E$, $F$ on figure \ref{fig:Wpts}.  When $a$ and $b$ are rational, it
is a square-tiled surface (in fact, it is tiled by small rectangles).


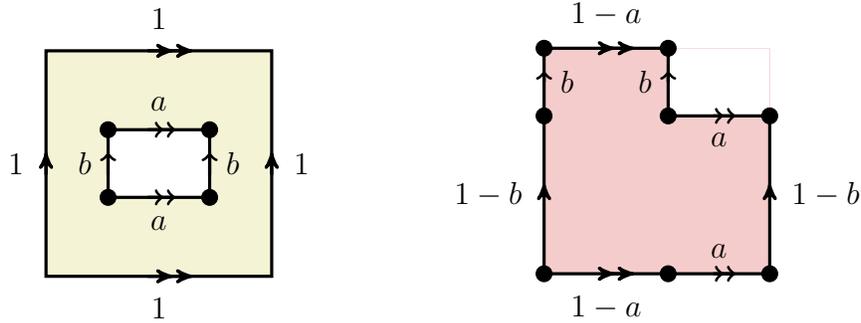
\begin{figure}[ht]
\begin{tikzpicture}[scale=1.5]
  \filldraw[fill=yellow!80!black!20,even odd rule]
    (-1,-1) rectangle (1,1) (-0.45,-0.3) rectangle (0.45,0.3);
  \draw [very thick] (-1,-1) rectangle (1,1);
  \draw [very thick] (-0.45,-0.3) rectangle (0.45,0.3);
  \draw[fill] (-0.45,-0.3) circle (2pt);
  \draw[fill] (-0.45,0.3) circle (2pt);
  \draw[fill] (0.45,-0.3) circle (2pt);
  \draw[fill] (0.45,0.3) circle (2pt);
  \draw [->>,very thick] (-0.1,-0.3) -- (0.15,-0.3);
  \draw [->>,very thick] (-0.1,0.3) -- (0.15,0.3);
  \draw [->,very thick] (-0.45,-0.1) -- (-0.45,0.1);
  \draw [->,very thick] (0.45,-0.1) -- (0.45,0.1);
  \begin{scope}[>=stealth']
  \draw [->>,very thick] (-0.1,-1) -- (0.3,-1);
  \draw [->>,very thick] (-0.1,1) -- (0.3,1);
  \draw [->,very thick] (-1,-0.1) -- (-1,0.1);
  \draw [->,very thick] (1,-0.1) -- (1,0.1);
  \node at (1.1,0) [right] {$1$};
  \node at (-1.1,0) [left] {$1$};
  \node at (0,1.1) [above] {$1$};
  \node at (0,-1.1) [below] {$1$};
  \node at (0.5,0) [right] {$b$};
  \node at (-0.5,0) [left] {$b$};
  \node at (0,0.37) [above] {$a$};
  \node at (0,-0.37) [below] {$a$};
  \end{scope}
\end{tikzpicture}%
\qquad\qquad
\begin{tikzpicture}[scale=1.5]
  \fill[fill=red!80!black!20,even odd rule]
    (-1,-1) rectangle (1,1) (0.1,0.4) rectangle (1,1);
  \draw [very thick] (-1,-1) -- (1,-1) -- (1,0.4) --
    (0.1,0.4) -- (0.1,1) -- (-1,1) -- cycle;
  \draw[fill] (0.1,0.4) circle (2pt);
  \draw[fill] (0.1,1) circle (2pt);
  \draw[fill] (0.1,-1) circle (2pt);
  \draw[fill] (1,0.4) circle (2pt);
  \draw[fill] (-1,0.4) circle (2pt);
  \draw[fill] (-1,-1) circle (2pt);
  \draw[fill] (-1,1) circle (2pt);
  \draw[fill] (1,-1) circle (2pt);
  \begin{scope}
  \draw [->>,very thick] (0.5,-1) -- (0.7,-1);
  \draw [->>,very thick] (0.5,0.4) -- (0.7,0.4);
  \draw [->,very thick] (-1,0.7) -- (-1,0.8);
  \draw [->,very thick] (0.1,0.7) -- (0.1,0.8);
  \end{scope}
  \begin{scope}[>=stealth']
  \draw [->>,very thick] (-0.4,-1) -- (-0.2,-1);
  \draw [->>,very thick] (-0.4,1) -- (-0.2,1);
  \draw [->,very thick] (-1,-0.3) -- (-1,-0.2);
  \draw [->,very thick] (1,-0.3) -- (1,-0.2);
  \end{scope}
  \node at (1.1,-0.3) [right] {$1-b$};
  \node at (-1.1,-0.3) [left] {$1-b$};
  \node at (-0.45,1.1) [above] {$1-a$};
  \node at (-0.45,-1.1) [below] {$1-a$};
  \node at (0.55,0.35) [below] {$a$};
  \node at (0.55,-0.95) [above] {$a$};
  \node at (-0.95,0.7) [right] {$b$};
  \node at (0.05,0.7) [left] {$b$};
\end{tikzpicture}%
\caption{Finite area surface $Y_{a,b}$: two views. Either cut
out a rectangle of dimensions $a\times b$ from a unit square and glue
parallel and same length sides of the resulting polygon together, or,
equivalently, glue facing sides of an L-shaped polygon (which amounts
to cutting out the $a\times b$ rectangle at the top right corner of
the unit square).}
\label{fig:finite-area-surface}
\end{figure}

\subsection{Good one-cylinder directions on square-tiled surfaces in
$\boldsymbol{\HHH(2)}$}

We recall that a translation surface in $\HHH(2)$ is hyperelliptic.

McMullen \cite{Mc} proved (see Hubert-Lelièvre \cite{HuLe} for
earlier results) that every square-tiled surface in $\HHH(2)$ contains
a one-cylinder direction.  Here, we need more, and we will use the 
following lemma.

\begin{lemma} \label{1-cyl}
Assume that $a = p/q$, $b = r/s$ are rational and belong to $\EEE$.
Then the surface $Y_{a,b}$ contains a dense set of one-cylinder
directions with $E$ and $F$ on the waist curve of the cylinder.
\end{lemma}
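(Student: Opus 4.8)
The plan is to produce one-cylinder directions on $Y_{a,b}$ by exploiting the $\SL_2(\Z)$-action on the square-tiled surface and McMullen's classification of $\SL_2(\Z)$-orbits in $\HHH(2)$, then to control \emph{which} Weierstrass points lie on the core curve of the cylinder. First I would put $Y_{a,b}$ in a normalized integer model: since $a=p/q$ and $b=r/s$ with $q,s$ even, rescale so that $Y_{a,b}$ is tiled by an integer number $N$ of unit squares; the L-shape $\Lsurf(1-a,a,1-b,b)$ becomes $\Lsurf(\alpha,\beta,\gamma,\delta)$ with integer parameters, and one computes $N$ in terms of $p,q,r,s$. The parity hypotheses defining $\EEE$ ($p,r$ odd, $q,s$ even) are exactly what is needed to pin down the residues of $\alpha,\beta,\gamma,\delta$ modulo $2$, hence to locate the six Weierstrass points $A,\dots,F$ (corners, centers, edge-midpoints) relative to the square tiling. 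In particular I expect the parity conditions to force $E$ and $F$ to be non-integer (edge-midpoint or center) Weierstrass points in a controlled way.

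Next I would invoke McMullen's/Kani's invariant: the $\SL_2(\Z)$-orbit of a primitive square-tiled surface in $\HHH(2)$ is detected by the number of integer Weierstrass points (together with the finer Kani invariant mentioned in the excerpt, to which the paper promises to appeal). Using this, I would identify the horizontal direction after applying a suitable element $g\in\SL_2(\Z)$: McMullen shows every square-tiled surface in $\HHH(2)$ has a one-cylinder direction, i.e.\ some $g$ such that $g\cdot Y_{a,b}$ is a single horizontal cylinder with a single saddle connection on each of its top and bottom boundary. The key refinement is to choose $g$ so that, in $g\cdot Y_{a,b}$, the images of $E$ and $F$ lie on the \emph{waist} (core) curve rather than on the boundary. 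Concretely, on a one-cylinder surface in $\HHH(2)$ the six Weierstrass points are: the singularity (on the boundary), and five more located at half-integer positions; three of them lie on the core curve at height $h/2$ and two on the boundary circle together with the singularity — the Kani/integer-Weierstrass-point data tells us the pattern. The parity bookkeeping from the first step tells us that $E,F$ are among the three core-curve points for a suitable choice of one-cylinder direction. Pulling $g$ back, the direction $g^{-1}\cdot(\text{horizontal})$ is a one-cylinder direction on $Y_{a,b}$ with $E,F$ on the waist.

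Finally, to get a \emph{dense} set of such directions rather than one, I would use that if $\theta_0$ is a one-cylinder direction with $E,F$ on the waist, then so is $h\cdot\theta_0$ for every $h$ in the stabilizer-type subgroup generated by the parabolic (Dehn twist) in direction $\theta_0$ together with elements of $\SL(Y_{a,b})$ permuting the Weierstrass points appropriately; more simply, the Veech group of a square-tiled surface is a finite-index, hence lattice, subgroup of $\SL_2(\Z)$, so the $\SL(Y_{a,b})$-orbit of the slope $\theta_0$ is a full orbit of a cusp of a Fuchsian lattice, and such cusp orbits are dense in $\R\cup\{\infty\}$. Since the property ``one-cylinder with $E$ and $F$ on the waist'' is invariant under the subgroup of $\SL(Y_{a,b})$ fixing (or suitably permuting back) those Weierstrass points — a finite-index subgroup, still a lattice — its cusp orbit is again dense. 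That yields the dense set of one-cylinder directions claimed.

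The main obstacle I expect is the second step: showing that a one-cylinder direction can be chosen with precisely $E$ and $F$ (not some other pair, and not the singularity) on the core curve. This requires a genuinely careful analysis of where McMullen's Weierstrass-point combinatorics place $A,\dots,F$ in a one-cylinder model of $\HHH(2)$, matched against the parity residues of $(\alpha,\beta,\gamma,\delta)$ coming from $(a,b)\in\EEE$ — this is where the Kani invariant, rather than just the count of integer Weierstrass points, is doing real work, since it distinguishes configurations with the same number of integer Weierstrass points but different positions of $E,F$ relative to the cylinder. Everything else (integer model, density via cusps) is fairly mechanical.
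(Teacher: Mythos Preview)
Your overall strategy---rescale to a primitive square-tiled surface, invoke McMullen's orbit classification and the Kani invariant, then get density via the lattice action---is exactly the paper's. But you overcomplicate the core step and miss the punchline that makes the argument clean. In a one-cylinder decomposition in $\HHH(2)$ there are \emph{two} Weierstrass points on the waist curve, not three (the other four are the singularity and the midpoints of the three boundary saddle connections). More importantly, McMullen's result is that these two waist-curve points are always the two Kani-distinguished ones: the two regular Weierstrass points whose torus-images are \emph{not} the common image of the other three. After rescaling, $\Lsurf(p,q-p,r,s-r)$ has all four parameters odd, so $A,B,C$ all project to $(1/2,1/2)$ on the torus; hence $E$ and $F$ are the distinguished pair. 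The conclusion is that $E,F$ lie on the waist of \emph{every} one-cylinder decomposition, not merely of one you have to locate carefully.

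This dissolves what you flag as the ``main obstacle'' and also simplifies the density step: you do not need to pass to a finite-index subgroup of the Veech group that stabilizes $\{E,F\}$. Since every one-cylinder direction is good, and the full Veech group (a lattice) sends one-cylinder directions to one-cylinder directions, the image of a single such direction under the Veech group is already dense in $P^1(\R)$.
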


\begin{defi}
In the sequel, we will call \emph{good one-cylinder direction} a
one-cylinder direction with $E$ and $F$ on the waist curve of
the cylinder.
\end{defi}

\begin{proof}
We first prove that there exists one good one-cylinder direction.  We
rescale the surface, horizontally by $q$ and vertically by $s$, to
make it a primitive square-tiled surface.  The surface $Y_{a,b}$
becomes the L-shaped surface $\Lsurf(p,q-p,r,s-r)$.  As $p, r, q-p,
s-r$ are odd, this surface belongs to the orbit A (defined in
subsection \ref{section-genustwo}).

Moreover, as C.~McMullen pointed out to us, Weierstrass points of a
surface in orbit A have the following property: 3 regular Weierstrass
points project to the same point of the torus, the singularity
projects to the origin of the torus, the two remaining Weierstrass
points project to the two other Weierstrass points of the torus.  This
gives a way to distinguish intrinsically these last 2 Weierstrass
points.  This property is $\SL_{2}(\Z)$ invariant.  It is known as
Kani's invariant.  McMullen (see \cite{Mc} Appendix A) proves that,
for a one-cylinder decomposition, the points on the waist curve of the
cylinder are the 2 distinguished Weierstrass points.  Therefore, it is
enough for us to check that the regular Weierstrass points $A$, $B$,
$C$ project to the same point on the torus, which is true, since, $p$,
$r$, $q-p$, and $s-r$ being odd, these points project to $(1/2,1/2)$
on the torus.  Consequently $E$ and $F$ belong to the waist curve of
\emph{every} one-cylinder decomposition.

Now, we prove the density.  The image of a one-cylinder direction
under an affine map is a one-cylinder direction.  Moreover the orbit
of any point of $P^1(\R)$ under a lattice in $\SL_{2}(\R)$ is dense in
$P^1(\R)$.  Therefore the set of good one-cylinder direction is dense
in the circle.
\end{proof}

\begin{remk}
Using the same method, we can prove that if the parameters $(a,b)$
belong to $\Q\times\Q \setminus \EEE$, then $Y_{a,b}$ has no good
one-cylinder direction.
\end{remk}

\subsection{Periodic orbits on $X_{a,b}$} \label{periodic orbits}

\begin{lemma} \label{periodic-orbits}
Good one-cylinder directions $Y_{a,b}$ are strongly parabolic
for $X_{a,b}$.  In such a direction, the length of a periodic orbit on
$X_{a,b}$ is twice the length of its projection on $Y_{a,b}$.
\end{lemma}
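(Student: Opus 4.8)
The plan is to analyze how a one-cylinder decomposition of $Y_{a,b}$ lifts under the $\Z^2$-cover $\pi \colon X_{a,b} \to Y_{a,b}$, using the specific combinatorial information provided by Lemma \ref{1-cyl}, namely that $E$ and $F$ lie on the waist curve. First I would recall the covering structure: $X_{a,b}$ is built from four copies $I,II,III,IV$ of the billiard table, and $Y_{a,b}$ is the quotient by the group generated by the two reflections (equivalently, $X_{a,b} \to Y_{a,b}$ is a $\Z^2$-cover, where the deck group records the "unfolding parity" in the horizontal and vertical directions). Concretely, the holonomy of a closed loop on $Y_{a,b}$ around the $\Z^2$-cover is determined by how many times it crosses the horizontal sides $b,t$ (mod $2$ in one $\Z$-factor, and genuinely in $\Z$ after accounting for the lattice translations) and the vertical sides $\ell,r$; I would make this precise by writing down the holonomy homomorphism $h \colon \pi_1(Y_{a,b}) \to \Z^2$ explicitly from the side-pairings in Figure \ref{fig:Xab-side-pairings}.

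The key step is: given a good one-cylinder direction $\theta$ on $Y_{a,b}$, let $C$ be the single cylinder, with core curve $\gamma$. Because the decomposition has a single cylinder, $Y_{a,b}$ minus its saddle connections in direction $\theta$ is exactly the open cylinder $C$, and the waist curve $\gamma$ passes through $E$ and $F$. I would then compute the image $h(\gamma) \in \Z^2$ of the core curve under the holonomy homomorphism. The crucial claim is that $h(\gamma) \ne 0$ but $h(\gamma^2) $ or more precisely that $\gamma$ is not null-homotopic in the cover but lifts to a closed curve after going around \emph{twice} — equivalently $h(\gamma)$ has order $2$ in the relevant quotient — so that each lift of the core curve to $X_{a,b}$ is a closed geodesic of exactly twice the length, and the lifts of $C$ form infinitely many isometric cylinders covering $X_{a,b}$. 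To get "order exactly $2$" I would use the $\EEE$-parity hypotheses: the passage of $\gamma$ through $E$ and $F$ forces $\gamma$ to cross the obstacle-boundary sides the right number of times so that the horizontal and vertical crossing parities are controlled. The position of $E$ and $F$ (midpoints of the vertical sides of the obstacle in the L-picture, or the relevant edge-midpoints) is what pins down that crossing count modulo $2$. Then I would verify that every point of $X_{a,b}$ lies on some lift of $C$ (since every point of $Y_{a,b}$ does, as $C$ is dense), that all these lifts are isometric (they are all translates of a single one under the deck group, which acts by isometries), and conclude that $\theta$ is strongly parabolic for $X_{a,b}$ with the stated length relation.

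I expect the main obstacle to be the precise bookkeeping in the holonomy computation: showing that $h(\gamma)$ has order exactly $2$ rather than order $1$ (which would make $\theta$ give compact cylinders lifting isometrically but not a genuine infinite decomposition — actually it would contradict infinite area) or infinite order (which would make the lifts bi-infinite strips, not cylinders, so $\theta$ would not be periodic on $X_{a,b}$ at all). The content of the $\EEE$ parity conditions and of the "$E$ and $F$ on the waist" condition must combine to give precisely order $2$. I would handle this by choosing a convenient representative of $\gamma$ adapted to the rectangle-tiling of $Y_{a,b}$ (using the scaling by $q$ and $s$ from the proof of Lemma \ref{1-cyl} to work with the integral model $\Lsurf(p,q-p,r,s-r)$), counting intersections of the core curve with the horizontal and vertical separatrices in terms of the slope $u/v$ of $\theta$, and reading off the parities from $p,r$ odd and $q,s$ even. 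The factor of $2$ in the length statement then drops out directly: the lift $\tilde\gamma$ closes up after $\gamma$ is traversed twice, so $\operatorname{length}(\tilde\gamma) = 2\operatorname{length}(\gamma)$, and since the flow on each lifted cylinder has the same circumference, this is the length of the corresponding periodic orbit on $X_{a,b}$.
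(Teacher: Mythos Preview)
Your proposal has a genuine gap in the covering-theoretic setup. You assert that $\pi\colon X_{a,b}\to Y_{a,b}$ is a $\Z^2$-cover and then look for an element $h(\gamma)$ of order~$2$ in $\Z^2$; but $\Z^2$ is torsion-free, so no such element exists. The paper itself states that $X_{a,b}$ is a $\Z^2$-cover of a \emph{finite-degree} cover of $Y_{a,b}$, not a $\Z^2$-cover of $Y_{a,b}$ directly. If you trace through the side-pairings in Figure~\ref{fig:Xab-side-pairings}, the deck group of $X_{a,b}\to Y_{a,b}$ is (isomorphic to) $D_\infty\times D_\infty$: the $(\Z/2)^2$ records which of the four reflected copies $I,II,III,IV$ you are in, and the $\Z^2$ records the lattice position, with the reflections acting by sign change. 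In that group there \emph{are} order-$2$ elements, and your strategy could be rescued---but you would then need to show that $h(\gamma)$ lands in the reflection part in both factors, and this is exactly where the geometric meaning of $E$ and $F$ enters, not the parity conditions from $\EEE$ (those were already consumed in Lemma~\ref{1-cyl} to produce the good direction).

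The paper's proof sidesteps the deck-group bookkeeping entirely and is worth comparing. It works on the billiard table $T_{a,b}$ rather than on $X_{a,b}$, and exploits a concrete symmetry: any preimage $\hat E$ of $E$ lies on a horizontal side of an obstacle, hence on a vertical line of symmetry of $T_{a,b}$, so the forward and backward billiard orbits from $\hat E$ are mirror images across that line; likewise $\hat F$ sits on a horizontal line of symmetry. Since on $Y_{a,b}$ the waist curve runs $E\to F$ in time $\ell/2$, one alternates the two reflections to track the coordinates of $\phi_{k\ell/2}(\hat E)$ explicitly and sees that $\phi_{2\ell}(\hat E)=\hat E$ with the same outgoing direction, while $\phi_\ell(\hat E)\ne\hat E$. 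This is precisely the order-$2$ phenomenon you are after, but obtained pointwise from the reflection symmetries rather than from a global holonomy computation, and it works uniformly for every good one-cylinder direction without any case analysis on slopes.
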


\begin{remk}
The proof has a fair amount in common with the proof of a 
similar result for $\Z$-covers of compact translation surfaces which 
appears in Hubert-Schmithüsen \cite{HuSc}.
\end{remk}

\begin{proof}
Let $\theta$ be a good one-cylinder direction on $Y_{a,b}$.  In
$Y_{a,b}$, every non singular orbit of direction $\theta$ is a
translate of the orbit of $E$.  Thus any non singular trajectory on
$X_{a,b}$ in direction $\theta$ is a translate of the orbit of some
preimage of $E$ (they belong to the same strip).  Thus it is enough to
prove that the orbit of any preimage of $E$ on $X_{a,b}$ is closed and
to compute its length (in fact, since the table $X_{a,b}$ is periodic,
the proof for one such orbit is enough).

We use the billiard flow for this proof instead of the linear flow
because it seems to be more simple.

\begin{claim}
\label{symmetry}
Let $\alpha$ be any direction other than horizontal or vertical,
$\phi$ the flow in direction $\alpha$, $\hat{E}$ a preimage of $E$
in $T_{a,b}$, $\hat{F}$ a preimage of $F$ in $T_{a,b}$.  Then, for 
each $t$, $\phi_{t}(\hat{E})$ and $\phi_{-t}(\hat{E})$ are symmetric with
respect to the vertical line containing $\hat{E}$, and
$\phi_{t}(\hat{F})$ and $\phi_{-t}(\hat{F})$ are symmetric with
respect to the horizontal line containing $\hat{F}$.
\end{claim}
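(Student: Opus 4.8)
The plan is to exhibit the asserted symmetry as the composition of a reflective symmetry of the billiard table $T_{a,b}$ with time reversal, and to verify it on the single billiard trajectory through $\hat E$ (and likewise through $\hat F$). First I would pin down $\hat E$ geometrically: tracing the construction of $Y_{a,b}$ as a quotient of the four copies making up $X_{a,b}$, the preimages of the Weierstrass point $E$ in $T_{a,b}$ are exactly the midpoints of the \emph{horizontal} sides of the obstacles, i.e.\ points of the form $(m, n\pm b/2)$ with $(m,n)\in\Z^2$ (and, likewise, the preimages of $F$ are the midpoints of the \emph{vertical} sides, of the form $(m\pm a/2, n)$). So I may take $\hat E = (0, b/2)$, the midpoint of the top side of the obstacle centered at the origin. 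Two features are then immediate: the vertical line $L = \{x = 0\}$ through $\hat E$ is an axis of reflective symmetry of $T_{a,b}$ (reflection in $L$ exchanges the obstacle centered at $(k,\ell)$ with the one centered at $(-k,\ell)$); and $\hat E$ is a regular boundary point lying on exactly one obstacle, where that obstacle's boundary is horizontal.

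Let $\sigma$ be the reflection of $\R^2$ in $L$, which by the above is an isometry of $T_{a,b}$; it therefore carries billiard trajectories to billiard trajectories, as does time reversal. Write $\gamma(t)$ for the position of $\phi_t(\hat E)$, the oriented billiard path in direction $\alpha$ through $\hat E$, with $\gamma(0) = \hat E$ and forward unit velocity $u$ there. Since $\alpha$ is neither horizontal nor vertical, $u$ has nonzero first coordinate, and, $\hat E$ lying on the horizontal top side of its obstacle, $u$ has nonzero second coordinate with incoming velocity $Ru$, where $R$ negates the second coordinate. Now set $\tilde\gamma(t) := \sigma(\gamma(-t))$. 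This is again a billiard path (reverse $\gamma$, then apply the isometry $\sigma$); it passes through $\sigma(\hat E) = \hat E$ at time $0$; and its forward velocity there is $-\sigma_*(Ru)$, which, since $\sigma_*$ negates the first coordinate, equals $u$. A billiard path is determined by a regular point together with a forward velocity, so $\tilde\gamma = \gamma$; that is, $\sigma(\gamma(-t)) = \gamma(t)$ for every $t$ in the common interval of definition. This is exactly the statement that $\phi_t(\hat E)$ and $\phi_{-t}(\hat E)$ are symmetric with respect to $L$. The assertion for $\hat F$ follows by the same argument with the two coordinate directions interchanged: $\hat F$ lies on a vertical obstacle side and on the horizontal symmetry axis through it, and reflection there negates the first coordinate.

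I expect the only real subtlety to be the sign bookkeeping at the collision point $\hat E$: one must combine the spatial reflection $\sigma$, the reversal of time, and the billiard reflection law off the horizontal obstacle side, and observe that the three sign changes (one per coordinate from $\sigma_*$ and $R$, one overall from time reversal) conspire so that $\tilde\gamma$ leaves $\hat E$ with precisely the velocity $u$ of $\gamma$. This is also the reason $\alpha$ must be neither horizontal nor vertical: otherwise $u$ has a vanishing coordinate, the trajectory either grazes the obstacle or (for vertical $\alpha$) runs along $L$, and the statement degenerates. Everything else is formal: that $\sigma$ is a genuine isometry of $T_{a,b}$ (which uses that $\hat E$ has integer first coordinate), that $\hat E$ is a regular point on the boundary of a single obstacle, and that billiard paths are reversible and are taken to billiard paths by isometries.
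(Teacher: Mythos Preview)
Your proof is correct and follows the same idea as the paper's: use that the vertical line through $\hat E$ is a reflective symmetry of $T_{a,b}$ and combine this with time reversal. The paper's argument is very terse (``before the first reflection the result is certainly true; by symmetry of the table, the trajectories remain symmetric forever''), while you make the same reasoning precise by defining $\tilde\gamma(t)=\sigma(\gamma(-t))$ and checking that $\tilde\gamma$ and $\gamma$ share the same initial position and outgoing velocity at $\hat E$, hence coincide.
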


(The figures in the appendix may be of help in reading this proof.)

As $\hat{E}$ belongs to a horizontal side, before the first reflection
on the boundary, the result is certainly true.  Now, as the table is
symmetric with respect to the vertical line containing $\hat{E}$, the
trajectories in positive and negative time remain symmetric forever.
The proof is similar for $\hat{F}$, so the claim is proved.

\begin{remk}
Consider a good one-cylinder direction for $Y_{a,b}$; recall that this
means $Y_{a,b}$ decomposes into a single cylinder, whose waist curve
contains $E$ and $F$.  If $\ell$ is the length of the cylinder, after
flowing during time $\ell/2$ from $E$ we reach $F$.
\end{remk}

We have all the ingredients to prove Lemma \ref{periodic-orbits}.
Call $(x,y)$ the coordinates of $\hat{E}$.  From the previous remark,
$\phi_{\ell/2}(\hat{E})$ is a preimage $\hat{F}$ of $F$ with
coordinates $(u,v)$.  By claim \ref{symmetry},
$\phi_{\ell}(\hat{E}) = (x, 2v -y)$.  Since $\phi_{\ell}(\hat{E}) =
\phi_{\ell/2}(\hat{F})$, using once more the remark,
$\phi_{\ell}(\hat{E})$ is a preimage $\tilde{E}$ of $E$.  By
claim \ref{symmetry} again, $\phi_{3\ell/2}(\hat{E}) = (2x -u,v)$.
Continuing the same way, $\phi_{3\ell/2}(\hat{E})$ is $\tilde{F}$ a
preimage of $F$.  Applying the symmetry argument,
$$\phi_{2\ell}(\hat{E}) = (x, 2v - (2v -y)) = (x,y) = \hat{E}.$$

We have to check that the outgoing vectors at $\hat{E}$ at times $0$ and
$2\ell$ have the same direction.  Let $\varepsilon$ be small enough.
Once more, using the symmetry argument for $\tilde{E}$, we see that
$\phi_{\varepsilon}(\hat{E})$ and $\phi_{2 \ell -
\varepsilon}(\hat{E})$ are symmetric with respect to the vertical line
containing $\hat{E}$ and $\tilde{E}$.  As $\hat{E}$ is on a horizontal
boundary, $\phi_{2 \ell + \varepsilon}(\hat{E})$ and $\phi_{2 \ell -
\varepsilon}(\hat{E})$ are symmetric with respect to this line.  Thus
$\phi_{2 \ell + \varepsilon}(\hat{E}) = \phi_{\varepsilon}(\hat{E})$.
This exactly means that the trajectory is closed.

We then remark that the length cannot be less that $2 \ell$.  It is a
multiple of $\ell$ since it is $\ell$ on $Y_{a,b}$.  We already note
that $\hat{E}$ and $\tilde{E}$ are symmetric with respect to the
vertical line containing $\hat{F}$.  As the $y$-coordinate of any
preimage of $F$ differs from the $y$ coordinate of $\hat{E}$, then
$\hat{E}$ and $\tilde{E}$ are different.  Therefore the trajectory of
$\hat{E}$ has length $2\ell$ and the proof of Lemma
\ref{periodic-orbits} is complete.
\end{proof}

Lemmas \ref{1-cyl} and \ref{periodic-orbits} prove the first point of
Theorem \ref{main}.
We shall now prove the remark stated after Theorem \ref{main}, 
and then its second point.

\subsection{Proof Remark \ref{remk:a=b=1/2}}

\begin{lemma}
\label{lem:ABC-strip}
Let $(a,b)\in(0,1)^2$, and label by $A$, $B$, $C$, $D$, $E$, $F$ the
Weierstrass points of $Y_{a,b}$ as on figure \ref{fig:Wpts}.  Let
$\gamma$ be a regular trajectory on $Y_{a,b}$ containing two of the
Weierstrass points $A$, $B$, $C$.  Then any lift to $X_{a,b}$ of
$\gamma$ has infinite length (it is not a closed trajectory on 
$X_{a,b}$).
\end{lemma}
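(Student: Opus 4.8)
The plan is to reduce to the case where $\gamma$ is closed, and then run a symmetry computation in the spirit of Lemma~\ref{periodic-orbits}, but arranged so that it produces a non-zero drift rather than a cancellation.

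\emph{The dichotomy.} If $\gamma$ is not closed on $Y_{a,b}$, then a fortiori no lift of $\gamma$ is closed, and a complete geodesic on $X_{a,b}$ that is not closed has infinite length; so assume from now on that $\gamma$ is closed, of length $L$. The points $A,B,C$ are among the six Weierstrass points of $Y_{a,b}$, hence fixed by the hyperelliptic involution $\iota$, and they are regular. Since $d\iota=-\operatorname{Id}$, the geodesic $\iota(\gamma)$ passes through the Weierstrass point on $\gamma$ with the same unoriented direction as $\gamma$, so $\iota(\gamma)=\gamma$. A regular closed geodesic is simple (it is the core of a cylinder), so writing $\iota\circ\gamma(t)=\gamma(c-t)$ one finds that $\iota$ fixes exactly two points of $\gamma$ and that they are at parameter distance $L/2$; these are the two Weierstrass points carried by $\gamma$, say $W_1,W_2\in\{A,B,C\}$ with $W_1\neq W_2$.

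\emph{The symmetry computation on the billiard.} Realize $Y_{a,b}$ through the centered model of figure~\ref{fig:finite-area-surface}, so that a cell is $[0,1]^2$ minus the centered rectangle and $\iota$ is $(x,y)\mapsto(1-x,1-y)$; its six fixed points on the surface are $D$ (the corners of the hole), $E$ and $F$ (midpoints of the sides of the hole) and $A,B,C$, which are the three Weierstrass points lying \emph{off} the obstacle, namely the cell corner $(0,0)$ and the cell-edge midpoints $(\tfrac12,0)$ and $(0,\tfrac12)$. Consequently every preimage in $T_{a,b}$ of a point of $\{A,B,C\}$ sits at a position $p$ with $2p\in\Z^2$, and for such $p$ the point reflection $\rho_p\colon x\mapsto 2p-x$ permutes the obstacles (each obstacle being symmetric about its center), so $\rho_p$ is a symmetry of the billiard table $T_{a,b}$. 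Lift $\gamma$ to a billiard trajectory $\psi\colon\R\to\R^2$ with $\psi(0)=p_1$ a preimage of $W_1$; then $\psi(L/2)=p_2$ is a preimage of $W_2$. Because $\psi$ is regular, the billiard trajectory through $p_1$ (resp.\ $p_2$) in a given direction is unique, so $\rho_{p_1}$ and $\rho_{p_2}$ reverse $\psi$ about the times $0$ and $L/2$: $\psi(-s)=2p_1-\psi(s)$ and $\psi(L/2+s)=2p_2-\psi(L/2-s)$ for all $s$. Combining the two relations gives $\psi(t+L)=\psi(t)+2(p_2-p_1)$ for every $t$.

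\emph{Conclusion.} Since $W_1\neq W_2$ lie in $\{A,B,C\}$, the difference $p_2-p_1$ has fractional part in $\{(\tfrac12,0),(0,\tfrac12),(\tfrac12,\tfrac12)\}$ (at least one of $W_1,W_2$ is an edge midpoint), so $p_2-p_1\notin\Z^2$; hence $\psi(nL)=\psi(0)+2n(p_2-p_1)$ leaves every bounded set, the billiard trajectory is unbounded, and therefore the lift of $\gamma$ to $X_{a,b}$ cannot be a closed trajectory and has infinite length. The three possible pairs $\{W_1,W_2\}$ are treated uniformly; note the contrast with $E$ and $F$, whose preimages in $T_{a,b}$ are \emph{not} at half‑integral positions, so that there the relevant symmetries are line reflections as in Claim~\ref{symmetry} and the analogous drift cancels.

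The step I expect to be the most delicate is not the geometric input (that the two point reflections are table symmetries, hence the drift $2(p_2-p_1)$), which is immediate, but the bookkeeping across the three pictures in the closed case: checking that $\psi$ develops $\gamma$ exactly once over $[0,L]$ so that $\psi(L/2)$ really lies over $W_2$, that regularity of $\gamma$ legitimizes the uniqueness (hence the reversal identities) globally, and that unboundedness of the billiard trajectory indeed forbids the $X_{a,b}$-lift from closing up.
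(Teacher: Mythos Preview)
Your argument is correct and is essentially the paper's proof, written out with considerably more care. Both proofs rest on the same observation: the preimages in $T_{a,b}$ of the Weierstrass points $A,B,C$ are centers of point symmetry for the table, so a billiard trajectory through two of them is invariant under a nontrivial translation and hence unbounded. The paper states this in two sentences and leaves the drift computation implicit; you make it explicit via $\psi(t+L)=\psi(t)+2(p_2-p_1)$.

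One remark: your use of the hyperelliptic involution to pin down that $W_1,W_2$ sit exactly $L/2$ apart on $\gamma$ is correct but not needed. The paper's version works with any two distinct centers of symmetry $A_0,B_0$ along the trajectory, since the composition $\rho_{B_0}\circ\rho_{A_0}$ is already the translation by $2(B_0-A_0)\neq 0$, regardless of the spacing. Likewise, your emphasis on $p_2-p_1\notin\Z^2$ is stronger than required: all that matters for unboundedness is $p_1\neq p_2$, which is immediate from $W_1\neq W_2$. These extras do no harm, but you could trim them. Your explicit handling of the non-closed case and the verification that $A,B,C$ lift to half-integer positions (hence to genuine centers of symmetry of $T_{a,b}$) are details the paper glosses over.
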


\begin{proof}[Proof of Lemma]
Let $\gamma$ be a closed geodesic on $Y_{a,b}$ containing $A$ and 
$B$, and let $\widehat{\gamma}$ be a corresponding billiard 
trajectory on $T_{a,b}$ (i.e. $\widehat{\gamma}$ unfolds to 
a lift of $\gamma$ on $X_{a,b}$).

Call $A_0$ and $B_0$ two points on $\widehat{\gamma}$ which correspond
to $A$ and $B$.  Since the billiard trajectory goes through $A_0$ and
$B_0$ in straight line (there is no reflection at these points) and
since these points are centers of symmetry for $T_{a,b}$, 
$\widehat{\gamma}$ also has $A_0$ and $B_0$ as centers of symmetry; 
therefore it is infinite.
\end{proof}

Recall that the Veech group of the surface $Y_{1/2,1/2}$ is generated
by $\mat{1}{2}{0}{1}$ and $\mat{0}{-1}{1}{0}$.  One-cylinder
directions have slope $n/m$ with $n,m$ odd, the other rational
directions correspond to periodic directions with 2 cylinders.  As
$(1/2,1/2)$ belong to $\EEE$, one-cylinder directions lift to periodic
directions on $X_{1/2,1/2}$.  To prove Remark \ref{remk:a=b=1/2}, it
now suffices to prove that in any two-cylinder direction for
$Y_{1/2,1/2}$, there is a regular geodesic on $X_{1/2,1/2}$ connecting
a lift of $A$ and a lift of $B$.

One more lemma helps with this.

\begin{lemma}
\label{lem:Wpts-orbits}
The orbits of the Weierstrass points of $Y_{1/2,1/2}$ under its 
affine group are $\{A, B, C\}$, $\{D\}$, $\{E, F\}$.
\end{lemma}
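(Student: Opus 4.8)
The plan is to identify the affine group of $Y_{1/2,1/2}$ concretely, and then trace where each Weierstrass point goes under a set of generators. Recall from the excerpt that for surfaces in $\HHH(2)$ the affine group is isomorphic to the Veech group, and that the Veech group of $Y_{1/2,1/2}$ is generated by the horizontal parabolic $\mat{1}{2}{0}{1}$ and the rotation $\mat{0}{-1}{1}{0}$. So it suffices to understand the action of these two affine diffeomorphisms on the six points $A,B,C,D,E,F$, whose positions relative to the square tiling are given in figure \ref{fig:Wpts}: $D$ is the singularity (an integer Weierstrass point), $E$ and $F$ are the two midpoints of edges fixed by the Kani-distinguished involution (they are edge-midpoints), and $A,B,C$ are the three remaining regular Weierstrass points, each projecting to $(1/2,1/2)$ on the torus $\R^2/\Z^2$.

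First I would note that since affine diffeomorphisms permute the singularities, and $D$ is the unique singularity, $D$ is fixed by the whole affine group; this gives the orbit $\{D\}$. Next, since an affine diffeomorphism permutes the Weierstrass points and commutes with the hyperelliptic involution, it preserves the intrinsic dichotomy supplied by Kani's invariant described in the proof of Lemma \ref{1-cyl}: the two Weierstrass points whose torus-images are the two nonzero order-two points of $\R^2/\Z^2$ (namely $E$ and $F$) form an invariant pair, and the three regular Weierstrass points whose torus-images coincide (namely $A,B,C$) form an invariant triple. Hence the three sets $\{A,B,C\}$, $\{E,F\}$, $\{D\}$ are each preserved setwise by the full affine group, so the orbits are unions of these sets. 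It remains only to check that each of $\{A,B,C\}$ and $\{E,F\}$ is a single orbit, i.e. that the affine group acts transitively on each.

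For transitivity I would compute directly. The rotation $\mat{0}{-1}{1}{0}$ realized as an affine diffeomorphism of the L-shaped surface $\Lsurf(1/2,1/2,1/2,1/2)$ sends horizontal edge-midpoints to vertical edge-midpoints and vice versa, which already forces $E \leftrightarrow F$ (they are the horizontal- and vertical-type midpoints among the edge-midpoint Weierstrass points), giving the orbit $\{E,F\}$. For the triple $\{A,B,C\}$, I would track two of the three points under the parabolic generator $\mat{1}{2}{0}{1}$ (a Dehn-twist-type map composed along the horizontal cylinders) and under the rotation, and read off from the explicit coordinates in figure \ref{fig:Wpts} that one obtains a $3$-cycle, or at least enough images to connect all three; since the orbit must be contained in $\{A,B,C\}$ and contains at least two of its points, and the affine group also contains $-I$ and enough other elements, transitivity follows. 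The only mild obstacle is bookkeeping: one must carry the Weierstrass points through the cut-and-reglue description of the affine generators carefully, using the standard fact that for $\HHH(2)$ these generators can be taken to act as products of Dehn twists in the cylinders of the horizontal (resp. vertical) decomposition, together with the symmetry of $Y_{1/2,1/2}$. Once the images of $A$, $B$, $C$ under the two generators are pinned down, the statement is immediate.
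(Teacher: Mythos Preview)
Your proposal is correct and follows essentially the same approach as the paper, whose proof is simply that the result is immediate from computing the action of the two generators $\mat{1}{2}{0}{1}$ and $\mat{0}{-1}{1}{0}$ on the six Weierstrass points. Your added invariance argument via Kani's invariant is sound but unnecessary, since the direct computation is short: the horizontal parabolic swaps $A\leftrightarrow B$ and fixes $C,E,F$, while the rotation fixes $A$, swaps $B\leftrightarrow C$, and swaps $E\leftrightarrow F$.
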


The proof is immediate, by looking at the action of $\mat{1}{2}{0}{1}$
and $\mat{0}{-1}{1}{0}$ which generate the Veech group.  (The affine 
group is isomorphic to the Veech group for surfaces in $\HHH(2)$.)

\bigskip

Now let $\theta = u/v$ be a two-cylinder direction for $Y_{1/2,1/2}$;
then $(v,u)$ is the image of $(1,0)$ under some element of the
Veech group $\SL(X_{1/2,1/2})$.  Let $f$ be the corresponding element
in $\Aff(Y_{1/2,1/2})$.

Since in the horizontal direction there is a geodesic on 
$Y_{1/2,1/2}$ connecting $A$ and $B$, in direction $\theta$
there is a geodesic on $Y_{1/2,1/2}$ connecting $f(A)$ and $f(B)$, 
which are two points among $A$, $B$, $C$, by Lemma \ref{lem:Wpts-orbits}.
We conclude by Lemma 
\ref{lem:ABC-strip}.

\subsection{Recurrence}

%

\begin{proofof}{Theorem \ref{main}}
We prove the second statement of Theorem \ref{main}: if $(a,b) \in
\EEE$, the linear flow is recurrent on $X_{a,b}$ for almost every
irrational direction.  Without loss of generality, we assume that the
slope is less than 1.

We first note that, for every $\kappa>0$ there exists a subset
$\Theta_{\kappa}$ of $[0,1]$ of full measure satisfying: if $\theta$
belongs to $\Theta_{\kappa}$, there exists an infinite sequence of
rationals $(p_{n}/ q_{n}) \in \PPP$ such that
$$\lvert \theta - \frac{p_{n}}{q_{n}} \rvert \leq \frac{\kappa}{q_{n}^2}.$$
This is true for every cusp of a finite index subgroup of
$\text{SL}_{2}(\Z)$ (see \cite{Su}).  This can be rephrased in the following way.
Given a sequence $\e_{n} \to 0$, there exists a set of full measure
$\Theta$ such that if $\theta$ belongs to $\Theta$, there exists an infinite
sequence of rationals $(p_{n}/ q_{n}) \in \PPP$ such that
\begin{equation}
\lvert \theta - \frac{p_{n}}{q_{n}} \rvert \leq \frac{\e_{n}}{q_{n}^2}.
\label{eq:dioph}
\end{equation}
As we proved that $\PPP$ contains all good one-cylinder
directions for $Y_{a,b}$, it contains at least a cusp of the Veech
group of $Y_{a,b}$.  This shows the estimate \eqref{eq:dioph}.

\begin{lemma}
\label{lem:cylinder-bounds}
Given a (finite-area) square-tiled surface $S$, there exists a
constant $K$ such that for every reduced rational $p/q \in (0,1]$,
cylinders in direction $p/q$ on $S$ have length at most $Kq$ and 
height at least $1/Kq$.
\end{lemma}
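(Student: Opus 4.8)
The plan is to compare the cylinder decomposition of $S$ in direction $p/q$ with the (trivial) one on the torus it covers. Let $N$ be the number of unit squares tiling $S$, so that $S$ is a translation covering $\pi\colon S\to T:=\R^2/\Z^2$ of degree $N$, ramified only over the origin of $T$; in particular $\pi$ restricts to an honest covering map $S\setminus\pi^{-1}(0)\to T\setminus\{0\}$. Fix a reduced rational $p/q\in(0,1]$. In direction $p/q$ the torus $T$ is foliated by closed geodesics of length $L:=\sqrt{p^2+q^2}$, and removing the unique such geodesic $\sigma$ through the origin leaves an open annulus $T^\circ=T\setminus\sigma$ of circumference $L$ and, since its area is $1$, height $h:=1/L$.

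Now $\pi^{-1}(\sigma)$ is a finite union of saddle connections of slope $p/q$ on $S$, and $S\setminus\pi^{-1}(\sigma)=\pi^{-1}(T^\circ)$ is exactly the union of the open cylinders of $S$ in direction $p/q$. Because $0\in\sigma$, the set $T^\circ$ avoids the ramification point, so $\pi$ restricts to a covering map $\pi^{-1}(T^\circ)\to T^\circ$, necessarily of degree $N$. Each connected component $C$ of $\pi^{-1}(T^\circ)$ is one of the cylinders in question, and $\pi|_C\colon C\to T^\circ$ is a connected covering of some degree $d_C\ge 1$, with $\sum_C d_C=N$; in particular $d_C\le N$. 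Since $T^\circ$ is an open annulus with $\pi_1\cong\Z$ generated by its core curve, its only finite connected covers are the ones unwrapped purely in the core direction; hence $C$ is a flat cylinder of height exactly $h$ and circumference $d_C L$.

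Assembling these facts, every cylinder of $S$ in direction $p/q$ has circumference $d_C L\le N L\le \sqrt2\,Nq$ (using $p\le q$, which is where the hypothesis $p/q\le1$ enters) and height $h=1/L\ge 1/(\sqrt2\,q)$, so the lemma holds with $K=\sqrt2\,N$. If $S$ is instead tiled by copies of a fixed rectangle, one rescales to the unit-square case and enlarges $K$ accordingly. The only step deserving care is the identification of $\pi|_C$ with a standard core-direction cover of $T^\circ$: this rests on the ramification of $\pi$ being concentrated over the origin and hence disjoint from $T^\circ$, which is what promotes the local isometry $\pi|_C$ to a covering map and lets the classification of connected covers of the circle take over.
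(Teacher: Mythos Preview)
Your proof is correct and takes essentially the same approach as the paper's---compare with the torus via the degree-$N$ branched covering and transfer the obvious length and height bounds---though the paper states this more tersely and without the explicit covering-space justification. One small overstatement worth noting: if some points of $\pi^{-1}(0)$ happen to be regular (cone angle $2\pi$), then the components of $\pi^{-1}(T^\circ)$ need not be the \emph{maximal} cylinders of $S$ in direction $p/q$ but only sub-cylinders thereof; however, each maximal cylinder is then a union of such components sharing a common circumference $d_C L\le NL$ and with total height a positive integer multiple of $h$, so your bounds on length and height carry over unchanged.
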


\begin{proof}
On the torus, the cylinder in direction $p/q$ has length 
$\sqrt{p^2+q^2}$ and height $1/\sqrt{p^2+q^2}$; assuming $p/q 
\leqslant1$, $\sqrt{p^2+q^2}\leqslant \sqrt{2}\cdot q$. Letting $d$ 
denote the number of squares of $S$, cylinders in direction $p/q$ on 
$S$ have length at most $d\cdot \sqrt{2}\cdot q$ and height at least 
$1/\sqrt{2} q$, which is no less than $1 / d\sqrt{2} q$.
\end{proof}

For a direction $p/q \in \PPP$, we proved that lifts to $X_{a,b}$ of
cylinders on $Y_{a,b}$ have double length and same height, so Lemma 
\ref{lem:cylinder-bounds} still holds for $X_{a,b}$ for directions in 
$\PPP$.

Let $R = R_{a,b}$ be the rectangle centered at the origin.  Fix
$\theta \in \Theta$, we want to prove that almost every orbit starting
from the boundary of $R$ comes back to $R$.  This claim implies that
the first return map on $R$ is defined almost everywhere.  As it is a
set of finite measure, by Poincaré recurrence Theorem, this map is
recurrent.  Then, by $\Z^2$ periodicity, the flow in direction
$\theta$ is recurrent in the table $T_{a,b}$.  Thus, it is enough to
prove the claim.

Let $p_{n}/ q_{n} \in \PPP$ such that
$$\lvert \theta - \frac{p_{n}}{q_{n}} \rvert \leq \frac{\e_{n}}{q_{n}^2}.$$

The boundary of the rectangle $R$ intersects transversally a finite
number of cylinders in direction $p_{n}/ q_{n}$.  This partitions
$\partial R$ into a finite number of intervals.  Calling $J$ one of
these intervals, and flowing in direction $\theta$ starting in $J$,
then after a time essentially equal to the length of the cylinder, the
trajectory crosses $J$ again, unless it leaves the cylinder before.
This only happens if the trajectory starts in an interval of measure
at most $K\e_{n}/q_{n}$ (see figure \ref{fig:recurrence}).  As the
length of $J$ is at least the reciprocal of the height of a cylinder
in direction $ \frac{p_{n}}{q_{n}}$, $\lvert J \rvert \geq 1/Kq_{n}$.
Consequently, the number of such intervals $J$ is at most $K'q_{n}$.
Thus, the measure of the points that do not return to $R$ is bounded
above by $KK'\e_{n}$.  As $\e_{n}$ tends to 0, almost every point
returns to $R$.  This ends the proof of Theorem \ref{main}.
\end{proofof}

\begin{figure}[ht]
\centering
\begin{tikzpicture} [scale=2]
  \draw [very thick] (0,0) -- (5,1) -- (5,2.4) -- (0,1.4) -- cycle;
  \draw [thick] (0,1.4) -- (5,2.25);
  \draw [thick] (0,1) -- (5,1.85);
  \draw [thick] (0,0.6) -- (5,1.45);
  \draw [thick] (0,0.2) -- (5,1.05);
  \draw [very thick,snake=brace,raise snake=3pt]
    (0,0) -- node [left=4pt] {$\leqslant K\e_n/q_n$} (0,0.2);
\end{tikzpicture}
\caption{Recurrence}
\label{fig:recurrence}
\end{figure}
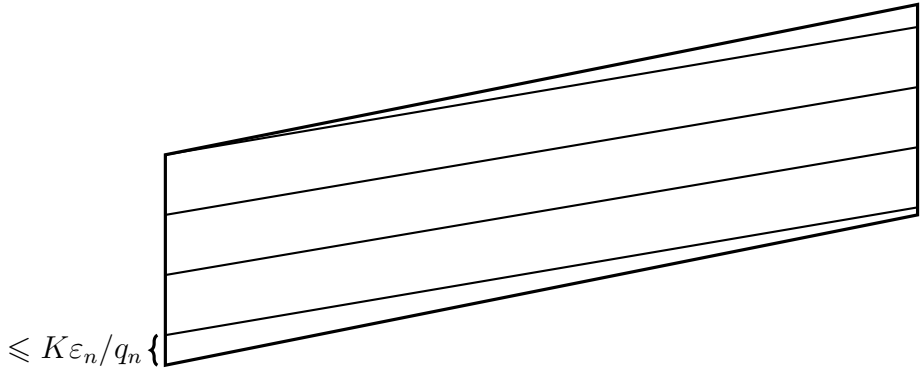


\section{Proof of Theorem \ref{remk:parity}}

\begin{proof}
Suppose $a = p/q$ and $b = r/s$ with $p$ and $r$ even.  Note that this
implies $q$ and $s$ are odd, and thus the surface $Y_{a,b}$ has an odd
number of squares.  $E$ and $F$ are integer Weierstrass points on
$Y_{a,b}$ because $p$ and $r$ are even.
If either $E$ of $F$ is on the waist line of a cylinder, this cylinder 
has even area.
Therefore neither $E$ nor $F$ can lie on the waist line of a cylinder
in a one-cylinder decomposition of $Y_{a,b}$.  Furthermore, $E$ and
$F$ cannot lie on the waist lines of two distinct cylinders in a
two-cylinder decomposition of $Y_{a,b}$.
It follows that in any rational direction, $Y_{a,b}$ has a cylinder
with two of the points $A$, $B$, $C$ on its waist line.  By
Lemma~\ref{lem:ABC-strip}, we conclude that in any rational direction
there is a strip on $X_{a,b}$.

The second point of the theorem is immediate: there are directions in
which $Y_{a,b}$ has only one cylinder; since these directions are not
completely periodic, this means that this cylinder of $Y_{a,b}$ lifts
to infinite strips on $X_{a,b}$; since this cylinder fills $Y_{a,b}$
these strips fill $X_{a,b}$, which means there is no periodic
trajectory on $X_{a,b}$.  This set of direction is dense and denoted
by $\PPP'$ in the sequel.

The proof of the third part has a similar structure to the proof of
Theorem \ref{main}.  Without loss of generality, we assume that the
slope is less than 1.  As we proved that $\PPP'$ contains at least a
cusp of the Veech group of $Y_{a,b}$ we can apply Khinchin-Sullivan's
Theorem (\cite{Su}), which states:

Let $\phi: \N \to \R^+$ be a function with
$(q\phi(q))$ non increasing and $\sum_{q=1}^{\infty} \phi(q) =
\infty$.  There exists a subset $\Theta$ of $[0,1]$ of full measure
satisfying: if $\theta$ belongs to $\Theta$, there exists a sequence
of rationals $(p_{n}/ q_{n}) \in \PPP'$ such that
$$\lvert q_{n}\theta - p_{n} \rvert \leq \phi(q_{n}).$$ 

Now, let $k \in \N$ and $f_{k}: x \mapsto \prod_{j=1}^k\log_{ j} x$, $\phi:
x \mapsto \e(x)/(xf_{k}(x))$ where $(\e(q))$ tends to zero as $q$
tends to infinity and is chosen so that $\sum_{q=1}^{\infty} \phi(q) =
\infty$.  For instance, one can choose $\e(x) = \frac{1}{\log_{ k+1}
x}$.

By Lemma \ref{lem:cylinder-bounds}, there is
a constant $K$ such that for every $p/ q \in \PPP'$, cylinders in
direction $p/q$ on $Y_{a,b}$ have length at most $Kq$ and height at
least $1/K q$.


\begin{figure}[ht]
\begin{tikzpicture}[scale=0.4]
  \foreach \i in {-8,-2,4}
  \foreach \j in {-2,4}
    \draw [thick] (\i,\j) rectangle +(4,4);
  \fill [blue!20!white] (3.5,-1.5) -- ++(-0.5,0.5) -- ++(1,1) -- 
    ++(0,2) -- ++(-2,2) -- ++(-2,0) -- ++(-1,-1) -- ++(-2,2) --
    ++(0.5,0.5) -- ++(0,2) -- ++(-1.5,-1.5) -- ++(0,-2) --
    ++(2,-2) -- ++(2,0) -- ++(1,1) -- ++(2,-2) --
    ++(-1,-1) -- ++(0,-2) -- ++(1.5,-1.5) -- cycle;
  \draw [very thin,blue] (3.5,-3.5) -- (2,-2) -- ++(2,2) --
    ++(-4,4) -- ++(-2,-2) -- ++(-2,2) -- ++(1.5,1.5);
  \draw [very thin,blue] (3.5,-1.5) -- (2,0) -- ++(2,2) --
    ++(-2,2) -- ++(-2,-2) -- ++(-4,4) -- ++(1.5,1.5);
  \draw [very thick,red,join=bevel] (3.5,-2.575) -- ++(-1.5,1.575)
    -- ++(2,2.1) -- ++(-2.7619,2.9) -- ++(-1.90476,-2) --
    ++(-1.90476,2) -- (-4,5.5) -- ++(1.5,1.575);
\end{tikzpicture}
\caption{Close to an escaping rational direction.
Here, a slope close to $1$, with $a = b = 2/3$.}
\label{fig:close-to-periodic}
\end{figure}





Let $R = R_{a,b}$ be the rectangle centered at the origin.  Fix
$\theta \in \Theta$, we want to prove that almost every orbit starting
from the boundary of $R$ satisfies $\limsup_{n \to \infty}
\dist(\phi^\theta_t x,x)/ \prod_{j=1}^k \log_j t = \infty$.  It is
enough to prove this claim by $\Z^2$ periodicity.

Let $p_{n}/ q_{n} \in \PPP'$ such that
$$\lvert \theta - \frac{p_{n}}{q_{n}} \rvert \leq
\frac{\e_{n}}{q_{n}^2 f_{k}(n) }.$$

The boundary of the rectangle $R$ intersects transversally a finite
number of strips in direction $p_{n}/ q_{n}$.  This partitions
$\partial R$ into a finite number of intervals.  Calling $J$ one of
these intervals, and flowing in direction $\theta$ starting in $J$,
then after a time essentially equal to $Kq_{n}$, the trajectory
crosses a copy of $J$ displaced by $(m_{1},m_{2}) \in \Z^2 \backslash
(0,0)$, unless it leaves the cylinder before, i.e.\ we have
$\dist(\phi^\theta_{Kq_{n}}x,x) \ge \sqrt{m_{1}^2 + m_{2}^2} \ge 1$.
This happens except if the trajectory starts in an interval of measure
at most $K\e_{n}/q_{n}f_k(q_{n})$ (see figure \ref{fig:recurrence}).
Repeat this $m$ times, since the cylinder in direction $p_{n}/ q_{n}$
lifts to a strip, we have
\begin{equation}\label{distance}
\dist(\phi^\theta_{mKq_{n}}x,x) \ge m
\end{equation} 
except for an interval of measure at most $mK\e_{n}/(q_{n} f_k(q_{n}))$.
Setting $m = f_k(q_{n})$ yields the upper bound $K\e_{n}/q_{n} $ on the
measure of the points that belong to $J$ and do not satisfy
\eqref{distance} with $m = f_k(q_{n})$.

As the length of $J$ is at least the height of a cylinder in direction
$ \frac{p_{n}}{q_{n}}$, $\lvert J \rvert \geq 1/Kq_{n}$.  Consequently,
the number of such intervals $J$ is at most $K'q_{n}$.  Thus, the
measure of the points of $R$ that do not satisfy \eqref{distance} with
$m = f_k(q_{n})$ is bounded above by $KK'\e_{n} \to 0$.
\end{proof}

\section{Proof of Corollary \ref{G-delta}}

In this section, we prove Corollary \ref{G-delta}.  We start with an 
easy lemma.

\begin{lemma} \label{density} The sets $\EEE$ and $\EEE'$
are both dense in $[0,1]\times [0,1]$.
\end{lemma}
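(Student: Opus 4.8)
The plan is to reduce the statement to a one-dimensional density claim, exploiting the fact that the conditions defining $\EEE$ (resp.\ $\EEE'$) on the two coordinates $a=p/q$ and $b=r/s$ are completely independent. Concretely, set
$$
S_0 = \Bigl\{\, \tfrac pq \in (0,1) : \gcd(p,q)=1,\ p \text{ odd},\ q \text{ even} \,\Bigr\},
\qquad
S_1 = \Bigl\{\, \tfrac pq \in (0,1) : \gcd(p,q)=1,\ p \text{ even},\ q \text{ odd} \,\Bigr\}.
$$
Then by definition $\EEE = S_0\times S_0$ and $\EEE' = S_1\times S_1$, so it suffices to prove that $S_0$ and $S_1$ are each dense in $[0,1]$, since the product of two dense subsets of $[0,1]$ is dense in $[0,1]^2$ (given $(x,y)$ and $\e>0$, approximate each coordinate within $\e/\sqrt2$).

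For $S_0$: fix $x\in[0,1]$ and $\e>0$, and choose $k\in\N$ with $2^{1-k}<\e$. The numbers $p\in\{1,3,5,\dots,2^k-1\}$ are odd, hence automatically coprime to $q=2^k$, and the fractions $p/2^k$ are spaced $2^{1-k}$ apart in $(0,1)$; thus one of them lies within $\e$ of $x$, and it belongs to $S_0$. For $S_1$: again fix $x\in[0,1]$ and $\e>0$, and pick a prime $q>2/\e$, which is necessarily odd. The numbers $p\in\{2,4,\dots,q-1\}$ are even and coprime to the prime $q$, and the fractions $p/q$ are spaced $2/q<\e$ apart in $(0,1)$; so one lies within $\e$ of $x$ and belongs to $S_1$. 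Hence both $S_0$ and $S_1$ are dense in $[0,1]$, and the lemma follows.

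There is essentially no obstacle here; the only mild point to watch is that one must produce a denominator of the prescribed parity \emph{simultaneously} with the coprimality condition and with a spacing tending to $0$. This is handled cleanly by taking $q$ a large power of $2$ in the $\EEE$ case (so that coprimality with an odd numerator is free) and $q$ a large prime in the $\EEE'$ case (so that coprimality with an even numerator is free). One could alternatively phrase the $S_1$ argument using $q=3^k$, noting that the even numbers below $3^k$ not divisible by $3$ are $4\cdot 3^{-k}$-dense, but the prime choice is shorter.
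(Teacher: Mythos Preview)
Your proof is correct. Both you and the paper reduce to the one-dimensional claim that $S_0$ and $S_1$ (the paper calls them $\Sigma$ and $\Sigma'$) are dense in $[0,1]$, and both observe that $\EEE=S_0\times S_0$, $\EEE'=S_1\times S_1$.

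The one-dimensional arguments differ. The paper fixes an arbitrary target rational $a/b$ and writes down an explicit sequence in $\Sigma$ converging to it: if $a,b$ are both odd it uses $\frac{(2k+1)a}{(2k+1)b+a}$, and if $a$ is even, $b$ odd it uses $\frac{2ka}{2kb+a}$; in each case one checks directly that the numerator is odd, the denominator even, and the fraction tends to $a/b$. You instead fix a large denominator of convenient shape ($q=2^k$ for $S_0$, $q$ a large prime for $S_1$) and observe that the admissible numerators are already $\e$-dense in $(0,1)$ after division by $q$, with coprimality coming for free. Your argument is slightly slicker in that it handles irrational targets directly and avoids the case split; the paper's argument has the minor advantage of exhibiting, for each rational target, an infinite sequence of approximants in $\Sigma$, though that extra information is not used anywhere.
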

\begin{proof}
To prove that $\EEE$ is dense in $[0,1]\times [0,1]$, it is enough to
show that the set
\begin{align*}
\Sigma & = 
\bigl\{\,  p/q \in \Q  :
 \quad 0<p<q,\quad p \textrm{ odd,}\quad q \textrm{ even}
\,\bigr\}
\end{align*}
is dense in $\Q \cap [0,1]$.  Let $a$ and $b$ integers with $(a,b) =
1$.  If $a/b$ is a rational in $[0,1]$.  If $a$, $b$ odd, the sequence
$(\frac{(2k+1)a}{(2k+1)b+a})$ of elements in $\Sigma$ tends to $a/b$.
If $a$ is even and $b$ odd then the sequence $(\frac{2k a}{2kb+a})$ of
elements in $\Sigma$ tends to $a/b$.  Thus the density is proven.
Essentially by the same proof, the set
\begin{align*}
\Sigma' & = 
\bigl\{\,  p/q \in \Q  :
\quad 0<p<q,\quad p \textrm{ even,}\quad q \textrm{ odd}
\,\bigr\}
\end{align*}
is dense in $\Q \cap [0,1]$, thus
$\EEE'$ is dense in $[0,1]\times [0,1]$.
\end{proof}

\begin{proofof}{Corollary \ref{G-delta}}
For each parameter value, call the rectangle centered at the origin
the \emph{base rectangle}.  Let $\Va$ denote the phase space of the
billiard map $\phi_{a,b}$ restricted to the base rectangle, i.e.\
$\Va$ is the direct product of boundary of the base rectangle with all
inward pointing unit vectors.

We will first prove, by approximating by tables from the set $\EEE
\cap \YYY$, that there is a dense $G_{\delta}$ set of tables
satisfying the first two points of the corollary.

\begin{claim*}
Fix a sequence
$\e_n \to 0$.  For each $(a,b) \in \EEE \cap \YYY$ and each $n \ge 1$,
we can choose a small open neighborhood $O(a,b,n)$ of $(a,b)$ such that
for all $(a',b') \in O(a,b,n)$,
\begin{enumerate}[label=\roman{*})\,]
\item{}
at least a proportion $(1-\e_n)$ of the points in $\Vp$ recur to 
$\Vp$,
\item{}
and the set of periodic points in $\Vp$ is at least $\e_n$-dense.
\end{enumerate}

\end{claim*}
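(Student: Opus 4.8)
The plan is to exploit continuity of the billiard dynamics in the parameters $(a,b)$, together with the two facts already established for parameters in $\EEE\cap\YYY$: that the billiard flow is recurrent in a.e.\ direction (so the first-return map to the base rectangle recurs on a full-measure set), and that periodic points are dense in $\Va$ (this comes from the density of strongly parabolic directions for $X_{a,b}$, i.e.\ the first point of Theorem~\ref{main}). The key observation is that both of these properties are \emph{quantitatively open}: they are really statements about finitely many orbit segments of bounded length, and such segments depend continuously on $(a,b)$ as long as they avoid the corners of the obstacle.

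First I would fix $(a,b)\in\EEE\cap\YYY$ and $n\ge1$. For point~i), recurrence of $\phi_{a,b}$ means that the full-measure set of points of $\Va$ whose forward orbit returns to the base rectangle has measure equal to that of $\Va$; hence there is a \emph{finite} time $T_n$ and a compact subset $F_n\subset\Va$ of measure at least $(1-\e_n/2)\,|\Va|$ such that every point of $F_n$ returns to the (open) base rectangle before time $T_n$, and moreover the return happens transversally (we may shrink $F_n$ to also avoid the measure-zero set of orbits that hit a corner within time $T_n$). The return of a point before time $T_n$ is an open condition on the phase point \emph{and} on the parameters, because the billiard trajectory of length $\le T_n$ is a finite concatenation of straight segments and reflections, each depending continuously on $(a,b)$ away from the corners; so there is a neighbourhood $O_1(a,b,n)$ of $(a,b)$ and, for each $(a',b')\in O_1$, an identification of $\Va$ with $\Vp$ under which the images of the points of $F_n$ still return to the base rectangle of $T_{a',b'}$ before time $T_n$. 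Since $|\Vp|$ also varies continuously, shrinking $O_1$ if needed gives that a proportion at least $(1-\e_n)$ of $\Vp$ recurs, which is i).

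For point~ii), the density of periodic points in $\Va$ gives a finite set of periodic billiard orbits $\gamma_1,\dots,\gamma_{M}$ (with some common period bound $L_n$, again avoiding corners) whose starting data form an $\e_n/2$-net in $\Va$. A periodic orbit that closes up in time $\le L_n$ and stays away from the corners persists under a small perturbation of the parameters: this is a standard transversality/implicit-function argument for billiards, since closing up is a finite system of equations in the reflection points that depends smoothly on $(a',b')$ and has nondegenerate linearisation when the orbit meets the walls transversally (alternatively one invokes the fact that, after passing to the translation-surface picture, these orbits live on periodic cylinders whose closure is an open condition). Thus there is $O_2(a,b,n)$ such that for $(a',b')\in O_2$ the perturbed orbits $\gamma_i(a',b')$ are still periodic and their starting data form an $\e_n$-net in $\Vp$, which is ii). Setting $O(a,b,n)=O_1(a,b,n)\cap O_2(a,b,n)$ proves the Claim.

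The main obstacle is the persistence-of-periodic-orbits step in ii): one must be careful that the finitely many chosen periodic orbits can be taken to hit the walls of the obstacle transversally (not tangentially and not at a corner), so that the implicit function theorem applies and the combinatorial reflection pattern is locally constant in $(a',b')$; orbits grazing a corner would be destroyed by an arbitrarily small perturbation. This is handled by choosing the net among periodic orbits that are \emph{interior} in the appropriate sense — which is possible because, in a strongly parabolic direction, a whole cylinder's worth of parallel periodic orbits exists, and all but finitely many of them miss the corners. Granting the Claim, the corollary follows by the usual Baire-category argument: for fixed $\YYY$ the set $G=\bigcap_{n\ge1}\bigcup_{(a,b)\in\EEE\cap\YYY}O(a,b,n)$ is a dense $G_\delta$ in $\YYY$ (dense since $\EEE\cap\YYY$ is dense), every parameter in $G$ has recurrent billiard flow (being, for every $n$, within $\e_n$ of full recurrence on the base rectangle, then invoking Poincaré recurrence and $\Z^2$-periodicity as in the proof of Theorem~\ref{main}) and has a dense set of periodic points (being $\e_n$-close to periodic for every $n$); intersecting with the analogous $G_\delta$ obtained from $\EEE'\cap\YYY$ and Theorem~\ref{remk:parity} yields the third point as well, completing the proof of Corollary~\ref{G-delta}.
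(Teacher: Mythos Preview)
Your proposal is correct and follows the same strategy as the paper: truncate to bounded-time recurrence on a set of nearly full measure, excise orbits that come close to corners so that the combinatorial itinerary is locally constant in $(a',b')$, and argue separately that periodic orbits persist. Two places where the paper is more explicit and your sketch would need tightening: first, there is no measure-preserving ``identification of $\Va$ with $\Vp$'' since the base rectangles have different perimeters; the paper instead restricts to the common part $A$ of all the $\Vp$ for $(a',b')$ in a preliminary neighborhood and checks that $A$ already has proportion $\ge 1-\e_n$ in each $\Vp$. Second, your implicit-function-theorem remark for periodic orbits is not quite right as stated: in a polygonal billiard a periodic orbit always lies in a one-parameter family (a cylinder of parallel closed geodesics), so as a fixed point it is degenerate and the IFT does not apply directly. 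The paper handles this with a separate lemma (stability of periodic orbits, proved by unfolding the trajectory and observing that for small $\lvert a'-a\rvert,\lvert b'-b\rvert$ the unfolded endpoint still lands inside the strip corresponding to the original cylinder, yielding a nearby closed orbit with slightly shifted slope and basepoint); your parenthetical alternative via cylinders on the translation surface is the correct idea, and is what the paper actually carries out.
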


First we show how the first two points of the corollary follow from
the claim.  For each $n \ge 1$ the set $\displaystyle O_n = \!\!\!\!\!
\bigcup_{(a,b) \in \EEE \cap \YYY} \!\!\!\!\!  O(a,b,n)$ contains
$\EEE \cap \YYY$ and thus is dense in $\YYY$.  Since $O_n$ is clearly
open, the set $\displaystyle \smash[t]{\bigcap_{m=1}^{\infty}
\bigcup_{n=m}^\infty O_n}$ is residual in $\YYY$.  For any parameter
$(a',b')$ in this set the set of periodic points in $\Vp$ are $\e_n$
dense for infinitely many $n$, i.e.\ they are dense in $\Vp$.
Furthermore for infinitely many $n$, a proportion $(1-\e_n)$ of the
points in $\Vp$ recur to $\Vp$, i.e.\ a.e.\ point in $\Vp$ recurs to
$\Vp$.  Since our tables are $\mathbb{Z}^2$ periodic, these two
results hold throughout the phase space, i.e. periodic points are
dense in the whole phase space (the second point of the corollary) and
a.e.\ point in a given rectangle recurs to that rectangle.  Thus for
any fixed rectangle, we can define almost everywhere a first return
map to that rectangle.  The first part of the corollary follow by
applying the Poincaré recurrence theorem and the Fubini theorem to
this map.

There remains to prove the claim.  Consider two parameter values
$(a_1,b_1)$ and $(a_2,b_2)$.  The corresponding base rectangles have
corners at $(\pm a_i/2, \pm b_i/2)$.  Throughout the rest of the proof
we will identify parts of the boundaries of the base rectangles as
follows: $(a_1,x)$ will be identified with $(a_2,x)$ for all $x$
satisfying $\lvert x\rvert \le \min(b_1,b_2)/2$, and similarly for the
other sides of the base rectangles.  This gives a natural
identification of the corresponding parts of the phase space $\Va$.
This identification respects the structure of the invariant measure,
thus it will be refered to as $\mu$, suppressing the parameter
dependence.

Fix $(a,b) \in \EEE \cap \YYY$.
Let $$O'(a,b,n) := \{(a',b') : \max(\lvert a'-a\rvert,\lvert
b'-b\rvert) \le \e_n/3\}.$$
Let $A = A(a,b,n)$ denote the part, which under the identification
described above, is common to $\Vp$ for all the parameter values in
$O'(a,b,n)$.
For each $(a',b') \in \YYY$, viewed as a subset of $\Vp$, the set
$A$  satisfies
\begin{equation}\label{e1}
\mu(A) \ge  (1 - \e_n) \mu(\Vp) \text{  and } A \text{ is at least } \e_n
\text{-dense in } \Vp.
\end{equation}

Theorem \ref{billiard} implies that i) and ii) hold for $(a,b) \in
\EEE$.  We need to extend this to an open neighborhood of parameter
values.  For the moment we place ourselves in $\Va$.  We need to
identify a ``large'' set of recurrent and periodic points for which
the dynamics is identical for all $(a',b')$ in some open neighborhood
$O(a,b,n)$ at the time of periodicity or recurrence.

To do this we will remove orbits which approach too close to a corner.
For this first remove a $\delta_n$ neighborhood of the horizontal and
vertical directions in $A$.  Call this set $B \subset \Va$.  We choose
$\delta_n$ so small that
\begin{equation}\label{e2}
\mu(B \cap A) \ge (1 - \e_n) \mu(A) \text{ and } 
B \cap A \text{ is at least } \e_n \text{ dense in } A.
\end{equation}
Since we have removed a neighborhood of the horizontal and vertical
directions, there exists a positive constant $K_n$ (depending on
$\e_n$) such that the geometric length $|o|_g$ and the combinatorial
length $|o|_c$ of any finite orbit segment satisfies $|o|_g/|o|_c <
K_n$ for sufficiently long orbit segments.  The combinatorial length
of an orbit segment is the number of collisions it makes.

Let $C_N$ be the set of points in $B \cap A$ which recur to the base
rectangle before time $N$.  Note that periodic points recur at the
time of their period.  Theorem \ref{billiard} implies that $\mu(B \cap
A \setminus \displaystyle\cup_{n \in \N} C_N) = 0$.  Thus we can
choose $N=N(a,b,n)$ so large that
\begin{equation}\label{e3}
\mu(C_N) \ge (1 - \e_n) \mu(B \cap A) \text{ and } 
C_N \text{ is at least } \e_n \text{ dense in } B \cap A.
\end{equation}

The orbit segment of any point in $C_N$ until it recurs to the base
rectangle stays in a ball of radius $K_n N $ centered at the origin.
There are at most $K'_nN^2$ rectangles in this neighborhood and thus
at most $K_n'' N^2 $ corners (for certain constants $K_n'$ and
$K_n''$ which depends on $\e_n$).  Setting $\gamma_n = \e_n/(N K_n''
N^2)$, the $\gamma_n$-neighborhood of the corners of all these
rectangles has measure at most $K_n''N^2 \gamma_n = \e_n/N$.  Consider
the set $D := \{x \in C_N:$ there is no $ 1 \le i \le N$ such that
$\phi_{a,b}^ix$ is $\gamma_n$-close to a corner$\}$.  Then
\begin{equation}\label{e4}
\mu(D) \ge  (1 - \e_n) \mu(C_N) \text{ and } D \text{ is at least }
\e_n \text{-dense
in } C_N.
\end{equation}

Let $\beta_{n} >0$, $(a',b')$ such that $\max(\lvert 
a'-a\rvert,\lvert b'-b\rvert) \le
\beta_n$ and let $x$ be a point such that the orbit of $x$ has the
same combinatorics (intersects the same rectangles) until the $N$th
collision on the tables $T_{a,b}$ and $T_{a',b'}$.  We have: $\lvert
\phi_{a,b}^k(x) - \phi_{a',b'}^k(x)\rvert < K'''N \beta_{n}$
where $K'''$ is a constant (this means that the orbits diverge at
speed at most linear which is obvious).  Take $\beta_{n}$ so small
that $2N\beta_{n} \leq \gamma_{n}/2$.  Now take $O(a,b,n) \subset
O'(a,b,n)$ a small neighborhood of $(a,b)$ given by $O(a,b,n) :=
\{(a',b') : \max(|a'-a|,|b'-b|) \le \beta_n\}$.  On the set $D$, the
dynamics for any table $(a',b') \in O(a,b,n)$ has the same
combinatorics as the dynamics on the table $(a,b)$.  In particular all
recurrent points of $D$ are recurrent for all parameter values.

To get the same result for periodic orbits, we need the following

\begin{lemma} \label{stability}
Periodic orbits are stable under small perturbations of the parameters.
\end{lemma}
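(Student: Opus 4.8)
The plan is to show that a periodic billiard orbit on $T_{a,b}$ persists, with the same combinatorics, under any sufficiently small perturbation of the parameters $(a,b)$, and that the perturbed orbit is again periodic. The statement is really about the base rectangle $T_{a,b}$ in the plane, so I would work directly with the (unfolded) billiard flow. The key mechanism is the one already exploited in the proof of Lemma~\ref{periodic-orbits}: the reflections off the sides of the rectangular obstacles are governed by the affine (reflection) structure, and the closing-up of an orbit is forced by the symmetries of the table rather than by a delicate resonance condition.

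First I would fix a periodic orbit $o$ for the table $T_{a,b}$, in a direction $\theta$ which is neither horizontal nor vertical (the only directions in which the flow is not defined), and which is a regular trajectory, so it avoids all corners. Let $N$ be its combinatorial length (number of collisions). Since $o$ is compact and stays a definite distance $\eta>0$ from every corner of every obstacle it meets, and meets only finitely many obstacles, there is $\beta_0>0$ such that for $\max(|a'-a|,|b'-b|)<\beta_0$ the orbit of the same starting point and direction on $T_{a',b'}$ has the same combinatorics for the first $N$ collisions: each collision point varies continuously (indeed at speed linear in the perturbation, as in the estimate $|\phi_{a,b}^k(x)-\phi_{a',b'}^k(x)|<K'''N\beta_n$ invoked just before the lemma), so as long as $\beta_0<\eta/(2K'''N)$ no collision jumps to a different side or obstacle. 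This gives a well-defined ``return map'' $F_{a',b'}$ on a neighborhood in phase space, depending smoothly on $(a',b')$, whose fixed point for $(a',b')=(a,b)$ is the periodic orbit.

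Next I would argue that $F_{a',b'}$ still has a fixed point near $o$. The cleanest route is to use the symmetry argument of Claim~\ref{symmetry}: the key point in Lemma~\ref{periodic-orbits} was that a trajectory issued from a preimage $\hat E$ of a Weierstrass point closes up purely because the table is symmetric about the vertical line through $\hat E$ and about the horizontal line through $\hat F$. If the original periodic orbit is of this ``Weierstrass'' type, then on $T_{a',b'}$ these symmetries are still present (they are symmetries of any rectangular obstacle centered at a lattice point, independent of $(a',b')$), so the perturbed orbit through the corresponding point closes up as well, and remains periodic — one only needs continuity of the combinatorics, which was just established. Since by Lemma~\ref{periodic-orbits} (together with Lemma~\ref{1-cyl}) the periodic orbits we care about are exactly the lifts of good one-cylinder directions, and since the affine group acts transitively enough that a dense set of periodic directions consists of images of such a Weierstrass orbit, the symmetry argument covers the periodic orbits that matter for the corollary. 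For a general periodic orbit, one instead notes that $F_{a',b'}$ is an orientation-preserving isometry of a one-dimensional transversal (the first-return map to a small segment), hence an affine map $t\mapsto \lambda t + c$ with $\lambda=1$ by area preservation, so it is a rotation or a translation of the transversal; since it has a fixed point at $(a',b')=(a,b)$ and the cylinder of parallel closed orbits through $o$ has positive width, the whole transversal segment is fixed for $(a,b)$, and by continuity the rotation number stays $0$, so $F_{a',b'}$ still fixes the transversal and the orbit persists.

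\textbf{The main obstacle} I expect is the second route for a completely general periodic orbit: one must rule out that the perturbation turns the closed orbit into a spiralling (non-closed) trajectory inside a slightly deformed cylinder, i.e.\ one must verify that the first-return map to a transversal stays a translation with no holonomy after perturbation. For the Weierstrass-type orbits this is automatic from the persistent symmetries, and those are the only orbits actually used in Corollary~\ref{G-delta}; so in writing the proof I would state the lemma for that class (or simply invoke the symmetry argument of Claim~\ref{symmetry}, which perturbs verbatim), and remark that the combinatorial-stability estimate $|\phi^k_{a,b}(x)-\phi^k_{a',b'}(x)|<K'''N\beta_n$ plus the fixed distance to the corners is what guarantees the combinatorics, hence the symmetry-forced closing-up, is preserved on $O(a,b,n)$.
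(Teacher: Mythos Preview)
Your proposal has a genuine gap: both of your routes keep the direction $\theta$ fixed on the perturbed table $T_{a',b'}$, and in general a periodic orbit does \emph{not} persist in the same direction. The first-return map $F_{a',b'}$ to a transversal is indeed a translation $t\mapsto t+c(a',b')$ with $c(a,b)=0$, but continuity only gives $c(a',b')$ small, not zero; there is no rotation-number rigidity here forcing $c\equiv 0$. Likewise, the symmetry argument of Claim~\ref{symmetry} does not ``perturb verbatim'': the symmetries of the table do persist, but the closing-up in Lemma~\ref{periodic-orbits} hinges on the fact that the orbit through $\hat E$ reaches a point $\hat F$ after time $\ell/2$, and this is a consequence of $\theta$ being a good one-cylinder direction for $Y_{a,b}$. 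For nearby $(a',b')$ (typically irrational), $\theta$ is no longer a periodic direction on $Y_{a',b'}$, so the orbit through the new midpoint $\hat E'$ in direction $\theta$ need never meet a midpoint $\hat F'$, and symmetry alone cannot force it to close.

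The idea you are missing is that one must allow the \emph{slope} to move. The paper's argument uses the cylinder of height $2d$ containing the periodic orbit, unfolds the trajectory from a point $x$ on a vertical side, and observes that after $N$ reflections on $T_{a',b'}$ the copy $x_{a',b'}$ of $x$ lies within distance $\kappa N\delta<d/2$ of the original unfolded return point. Hence $x_{a',b'}$ sits inside the unfolded strip of the cylinder (into which, for $\delta$ small, no singularity has entered), and the straight segment from $x$ to $x_{a',b'}$ in the unfolded plane projects to a closed orbit on $T_{a',b'}$ with the same combinatorics but a slightly different slope. This is exactly the fix for the ``spiralling'' obstacle you identified: rather than showing $c(a',b')=0$, one tilts the direction to absorb the drift.
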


\begin{proof}
Given a periodic orbit $\gamma$ of slope $\theta$ and combinatorial
length $N$ on $T_{a,b}$, we want to construct a periodic orbit
$\gamma'$ of the same combinatorial length on $T_{a' ,b'}$ where
$(a',b')$ belongs to a small neighborhood of $(a,b)$.  We also want
the slope and the starting point of $\gamma'$ to be close to those of
$\gamma$.

Our periodic orbit belongs to a cylinder $\CCC$ of height $2d$.
Without loss of generality, we assume that the periodic orbit is the
waist curve of this cylinder.  We fix the origin $x$ on a vertical
side of a rectangle.  To get estimates, we also assume that $\theta$
belongs to the complement of a small neighborhood of the horizontal
and vertical directions.  We take $(a',b')$ close enough from $(a,b)$
so that the orbits of $x$ of slope $\theta$ in $T_{a,b}$ and
$T_{a',b'}$ have the same combinatorics (exactly as in the proof of
the previous lemma).  We denote by $O(a,b)$ this neighborhood and
$\delta$ its diameter.  Call $J$ the vertical segment containing $x$
(the common part under identifications for all the parameter values in
$O(a,b)$).  We unfold the trajectory $\gamma$ starting from $x$.
Denote by $J_{a,b}$ and $J_{a',b'}$ the segments obtained after $N$
collisions.  As we already mentioned in the previous lemma,
$J_{a',b'}$ is translated from $J_{a,b}$ by a vector of length at most
$\kappa N\delta$ where $\kappa$ is a constant depending on the slope
(uniform when $\theta$ is not too close from the vertical and
horizontal directions).  We choose $\delta$ so small that $\kappa
N\delta < d/2$.  Denote by $x_{a',b'}$, the point $x$ on $J_{a',b'}$.
The point $x_{a',b'}$ belongs to the strip of height $d$ contained in
the unfolding of the cylinder $\CCC$ (see figure
\ref{fig:stable-periodic}).  If $\delta$ is small enough, no
singularity enters this strip.  Thus, the projection of the segment
from $x$ to $x_{a',b'}$ is a periodic orbit on $T_{a',b'}$ with the
required properties.
\end{proof}



\begin{figure}[ht]
\begin{tikzpicture}
  [x=1.75cm,y=1.75cm,scale=1.2]
  \foreach \i in {-2,0,...,2}
  \foreach \j in {0,2,...,4}
    \draw [thick,blue] (\i,\j) rectangle +(1,1);
  \draw[very thick,blue] (-1,2.5) -- ++(1.5,1.5) -- ++(1.5,-1.5)
     -- ++(-1.5,-1.5) -- ++(-1.5,1.5);
  \foreach \i in {-2,0,...,2}
  \foreach \j in {0,2,...,4}
    \draw[thick,red] (\i,\j) ++(0.05,-0.05) rectangle +(0.9,1.1);
  \draw[very thick,red] (-1.05,2.5) -- ++(1.45,1.45) -- ++(1.65,-1.65)
     -- ++(-1.25,-1.25) -- ++(-1.85,1.85);
  \draw [thin,->] (-0.8,1.8) node [below] {$x$} to
    [bend right=5] (-0.95,2.35);
  \draw [thin,->] (-1.35,2.3) node [left] {$x_{a',b'}$} +(-0.05,0.1) to
    [bend left=5] (-1.1,2.5);
\end{tikzpicture}
\caption{Stability of periodic orbits.}
\label{fig:stable-periodic}
\end{figure}
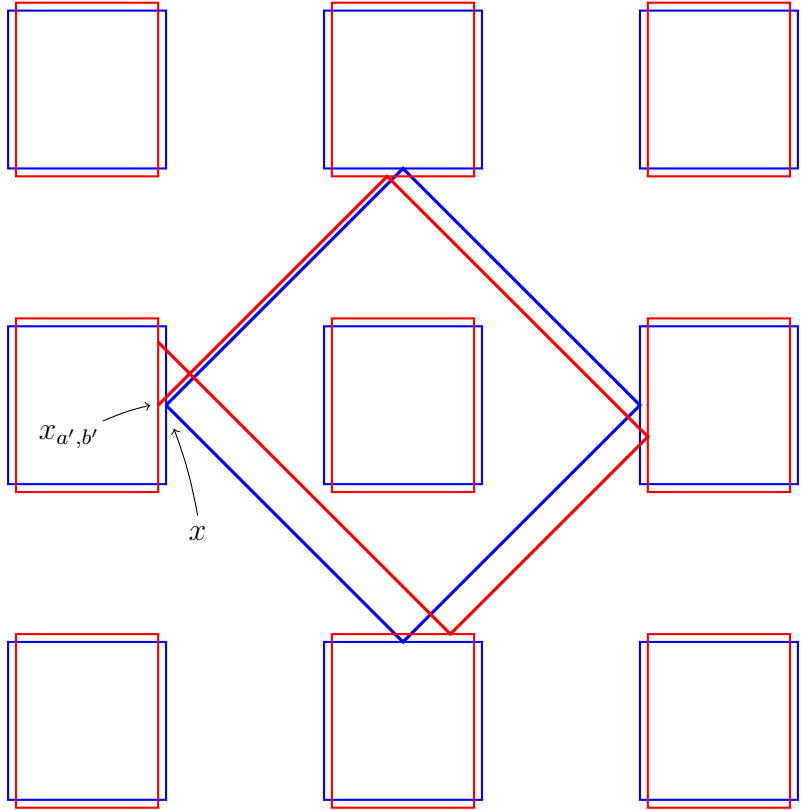

Combining  Equations
\eqref{e1}-\eqref{e4} yields (after appropriate redefinition of the
$\e$'s) our claim:
\begin{equation}
\mu(D) \ge (1-\e_n)^4 \mu(\Vp) \text{ and } D \text{ is at least }
4 \e_n \text{ dense
in } \Vp.
\end{equation}
for all $(a',b') \in O(a,b,n)$.

\medskip

Finally we will construct a dense $G_{\delta}$ set of tables which
satisfies the third point of the corollary.  Fix a sequence $\e_n \to
0$ and fix $k \ge 1$.  

\medskip

\begin{claim*}
For each $(a,b) \in \EEE' \cap \YYY$ and each
$n \ge 1$  we can choose a small open neighborhood
$O(a,b,n)$ of $(a,b) \in \EEE'$ such that, for $(a',b') \in 
O(a,b,n)$,
\begin{enumerate}[label=\roman{*})\,]
\setcounter{enumi}{2}
\item{}
there exists $m = m(a,b,n,x)$ such that at least a proportion
$(1-\e_n)$ of the points in $\Vp$ satisfy $\displaystyle
\frac{\dist(\phi^\theta_mx,x)}{\prod_{j=1}^k \log_{j} m} \ge n$.
\end{enumerate}
\end{claim*}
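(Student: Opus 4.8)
To prove this last claim I will follow the argument of the preceding claim almost word for word, changing only which ``good set'' of points in phase space is carried along, and then add one extra step --- the only genuinely new point --- to push the conclusion onto nearby parameters. In the preceding claim the good set consisted of points recurring to the base rectangle before a fixed time; here it will instead consist of points for which the diffusion estimate of Theorem~\ref{thm:billiard-other-parity} is already realized at some \emph{uniformly bounded} time. The difficulty, relative to the preceding claim, is that the property to be transferred from $(a,b)$ to a nearby $(a',b')$ is now a metric inequality with a slowly growing denominator rather than a purely combinatorial event; I expect this to be the delicate step, and the remedy will be to build a factor of slack into the good set, so that the $O(N\beta_n)$ errors produced by perturbing the table are harmless.

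Fix $(a,b)\in\EEE'\cap\YYY$, $n\ge1$, $k\ge1$, and a threshold $m_0$ so large that $t\mapsto\prod_{j=1}^k\log_jt$ is defined and bounded below by a constant $c>0$ on $[m_0,\infty)$. The input is the third item of Theorem~\ref{thm:billiard-other-parity} applied to $T_{a,b}$: combined with Fubini, it gives $\limsup_{t\to\infty}\dist(\phi^\theta_tx,x)/\prod_{j=1}^k\log_jt=\infty$ for $\mu$-almost every $x\in\Va$, so that the set
$$S=\bigl\{\,x\in\Va:\ \exists\,m\ge m_0\ \text{with}\ \dist(\phi^\theta_mx,x)\ge 2n\textstyle\prod_{j=1}^k\log_jm\,\bigr\}$$
has full measure in $\Va$. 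For $x\in S$ fix one such number $m(x)$. Since the sets $\{x\in S:m(x)\le N\}$ exhaust $S$, I may choose $N=N(a,b,n)$ so that this set has measure at least $(1-\e_n)\mu(\Va)$; on it $m(x)\in[m_0,N]$. With $N$ fixed, I run the construction of the preceding proof verbatim: intersect with the common part $A$ of the phase spaces $\Vp$ for $(a',b')\in O'(a,b,n)$; delete a small neighborhood of the horizontal and vertical directions to get $B$, on which the geometric length of an orbit segment is at most $K_n$ times its combinatorial length; delete the points whose orbit, before the collision reached at time $m(x)$, enters a $\gamma_n$-neighborhood of one of the finitely many corners it can meet; and put $O(a,b,n)=\{(a',b'):\max(\lvert a'-a\rvert,\lvert b'-b\rvert)\le\beta_n\}$ with $2N\beta_n\le\gamma_n/2$. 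Each deletion costs only a factor $1-\e_n$ in measure, exactly as in Equations~\eqref{e1}--\eqref{e4} of that proof, so the resulting set $D$ satisfies $\mu(D)\ge(1-\e_n)^4\mu(\Vp)$ (hence $\ge(1-\e_n)\mu(\Vp)$ after the usual renaming of the $\e$'s), and for every $(a',b')\in O(a,b,n)$ the billiard orbit of each $x\in D$ has the same combinatorics on $T_{a,b}$ and on $T_{a',b'}$ up to that collision, the two orbit arcs staying within $K'''N\beta_n$ of each other.

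There remains the transfer of the estimate. Fix $(a',b')\in O(a,b,n)$ and $x\in D$, write $m=m(x)$, and let $m'$ be the time on $T_{a',b'}$ reached after the same number of collisions that $m$ reaches on $T_{a,b}$. Since corresponding collision points lie within $K'''N\beta_n$ of each other and the flow has unit speed, both $\lvert m'-m\rvert$ and $\bigl|\dist(\phi^\theta_{m'}x,x)_{T_{a',b'}}-\dist(\phi^\theta_{m}x,x)_{T_{a,b}}\bigr|$ are $O(N\beta_n)$; and since $m,m'$ lie in the fixed compact interval $[m_0,N+1]$, where $t\mapsto\prod_{j=1}^k\log_jt$ is uniformly continuous and $\ge c$, one last shrinking of $\beta_n$ makes $\bigl|\prod_{j=1}^k\log_jm'-\prod_{j=1}^k\log_jm\bigr|$ and $O(N\beta_n)$ as small as needed. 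The slack factor $2n$ then lets one conclude
$$\dist(\phi^\theta_{m'}x,x)_{T_{a',b'}}\ \ge\ 2n\,\textstyle\prod_{j=1}^k\log_jm-o(1)\ \ge\ n\,\textstyle\prod_{j=1}^k\log_jm',$$
so iii) holds on all of $D$, with witness time $m'$, which proves the claim. The third item of Corollary~\ref{G-delta} now follows exactly as its first two items did, with $\EEE'$ in place of $\EEE$: the union over $(a,b)\in\EEE'\cap\YYY$ of the neighborhoods $O(a,b,n)$ is open and dense in $\YYY$, the corresponding $G_\delta$ is residual, and on it the condition $\e_n\to0$ upgrades iii) to $\limsup_{t\to\infty}\dist(\phi^\theta_tx,x)/\prod_{j=1}^k\log_jt=\infty$ for $\mu$-a.e.\ $x$, whence $\Z^2$-periodicity and Fubini yield the statement for a.e.\ direction throughout the phase space.
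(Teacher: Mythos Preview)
Your proof is correct and follows essentially the same route as the paper: fix $(a,b)\in\EEE'$, exhaust the full-measure set from Theorem~\ref{thm:billiard-other-parity} by bounded-time witnesses, trim away near-horizontal/vertical directions and orbits that approach corners, and shrink the parameter neighborhood until the combinatorics on $[0,N]$ is frozen. The one place where you go beyond the paper is worth noting: the paper simply asserts that identical symbolic orbits on $T_{a,b}$ and $T_{a',b'}$ transfer condition~iii) to the perturbed table, without addressing that iii) is a \emph{metric} inequality (with a $t$-dependent denominator) rather than a combinatorial event. Your device of building a slack factor $2n$ into the witnessing set and then controlling both $\lvert m'-m\rvert$ and the difference of distances by $O(N\beta_n)$, together with the uniform continuity of $\prod_j\log_j$ on $[m_0,N+1]$, is exactly what is needed to close that gap, and it is a genuine improvement in rigor over the paper's somewhat elliptical treatment.
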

\medskip

We next show that the corollary follows from the claim.  For each $n
\ge 1$ the set $\displaystyle O_n = \!\!\!\!\!  \bigcup_{(a,b) \in
\EEE' \cap \YYY} \!\!\!\!\!  O(a,b,n)$ contains $\EEE' \cap \YYY$ and
thus is dense in $\YYY$.  Since $O_n$ is clearly open the set
$\displaystyle \smash[t]{\bigcap_{m=1}^{\infty} \bigcup_{n=m}^\infty
O_n}$ is residual in $\YYY$.  For any parameter $(a',b')$ in this set
we have for a.e.\ point in $\Vp$ for any infinite subsequence of $n$'s
there are $a = a(n)$, $b = b(n)$ such that $a',b' \in O(a,b,n)$ and
there is $m = m(a,b,n,x)$ such that
$\dist(\phi^\theta_mx,x)/(\prod_{j=1}^k \log_{j}m) \ge n$ for a.e.\
point in $\Vp$.  Since our tables are $\mathbb{Z}^2$ periodic, there
is a dense $G_{\delta}$ of tables satisfying the third point in the
corollary.  Since the intersection of two dense $G_{\delta}$ sets is
again a dense $G_{\delta}$ the corollary follows.

There remains to prove the claim.  Fix $(a,b) \in \EEE' \cap \YYY$ and
define as above the sets $O'(a,b,n)$,and $A = A(a,b,n)$.  Theorem
\ref{remk:parity} implies that iii) holds for $(a,b) \in \EEE'$.  We
need to extend this to an open neighborhood of parameter values.  For
the moment we place ourselves in $\Va$.  As before we remove a
$\delta_n$ neighborhood of the horizontal and vertical directions in
$A$, calling the resulting set $B \subset \Va$.  We choose $\delta_n$
so small that equation \ref{e2} is satisfied and choose $K_n$ as
before as well.

Let $C_N$ be the set of points $x \in B \cap A$ which satisfy iii)
with $m(a,b,n,x) \le N$.  Theorem \ref{remk:parity} implies that
$\mu(B \cap A \setminus \displaystyle\cup_{n \in \N} C_N) = 0$.  Thus
we can choose $N=N(a,b,n)$ so large that
\begin{equation}\label{e3'}
\mu(C_N) \ge (1 - \e_n) \mu(B \cap A).
\end{equation}

The orbit segment up to time $m(a,b,n,x)$ of any point $x \in C_N$
stays in a ball of radius $K_n N $ centered at the origin.  Defining
$D$ (the set of points in $C_N$ whose orbits of length $N$ stay
sufficiently far away from corners to have the same symbolic orbit for
all parameter values in $O(a,b,n)$) in the identical way as above
yields
\begin{equation}\label{e4'}
\mu(D) \ge  (1 - \e_n) \mu(C_N).
\end{equation}


Combining Equations \eqref{e1},\eqref{e2},\eqref{e3'},\eqref{e4'}
yields (after appropriate redefinition of the $\e$'s) our claim:
for all $(a',b') \in O(a,b,n)$,
\begin{equation}
\mu(D) \ge (1-\e_n)^4 \mu(\Vp).
\end{equation}

\vspace*{-12pt}
\end{proofof}


\section*{Appendix: a few pictures}

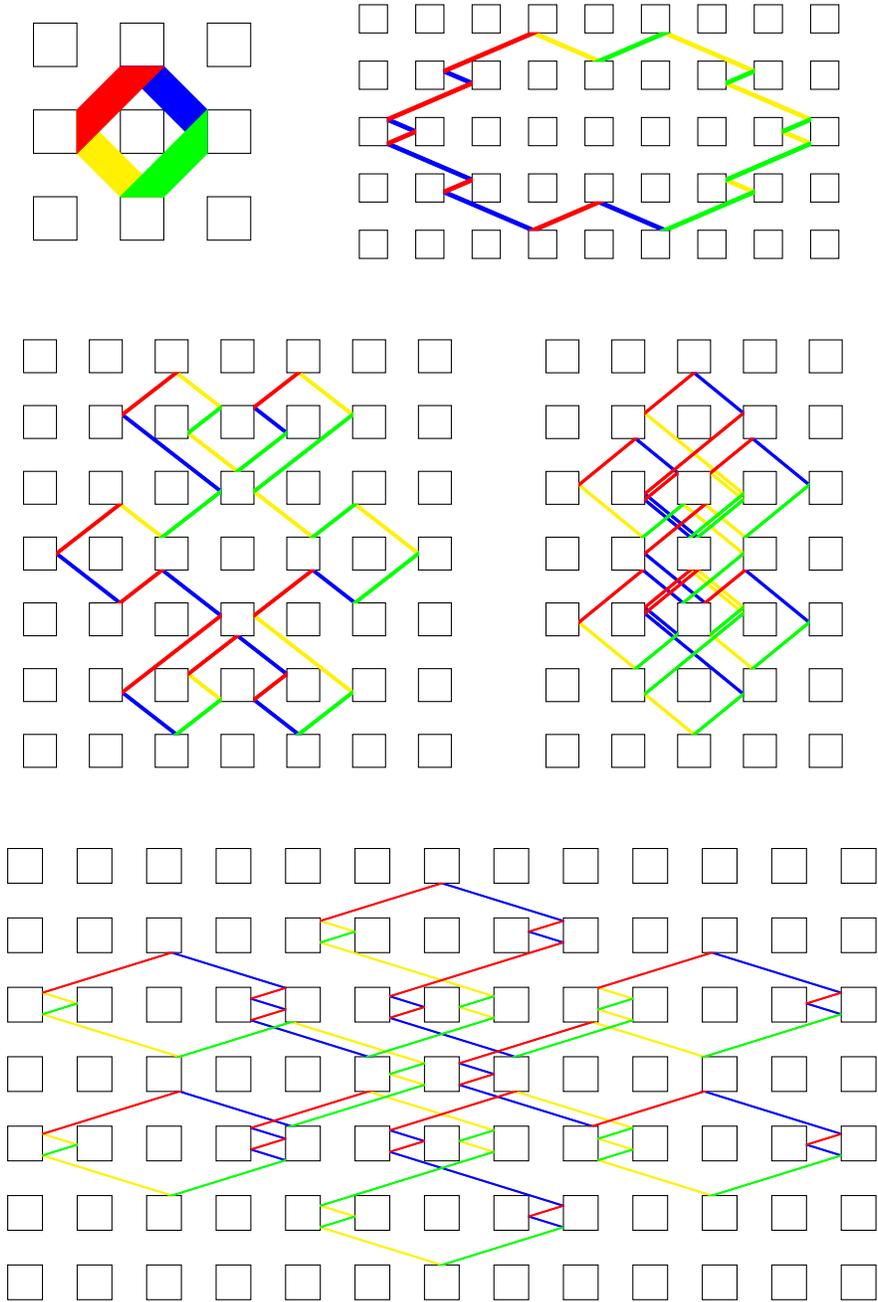
\begin{figure}[ht]
\begin{minipage}[ht]{0.33\linewidth}
\centering
\begin{tikzpicture}
  \begin{scope}
  [x=1.75cm,y=1.75cm,scale=0.33]
  \foreach \i in {-2,0,...,2}
  \foreach \j in {0,2,...,4}
    \draw[thin] (\i,\j) rectangle +(1,1);
  \draw[very thin,fill,color=blue] (2,2) -- +(0,1) -- (1,4) -- +(-1,0) -- cycle;
  \draw[very thin,fill,color=yellow] (-1,2) -- +(0,1) -- (1,1) -- +(-1,0) -- cycle;
  \draw[very thin,fill,color=green] (0,1) -- +(1,0) -- (2,2) -- +(0,1) -- cycle;
  \draw[very thin,fill,color=red] (0,4) -- +(1,0) -- (-1,2) -- +(0,1) -- cycle;
  \end{scope}
\end{tikzpicture}
\end{minipage}%
\begin{minipage}[ht]{0.63\linewidth}
\centering
\begin{tikzpicture}
  [x=0.416666666667cm,y=0.178571428571cm,scale=0.3]
  \foreach \i in {-24,-18,...,24}
  \foreach \j in {-42,-28,...,14}
    \draw[thin] (\i,\j) rectangle +(3,7);
  \draw[very thin,fill,color=yellow] (8,14) -- +(1,0) -- (18,5) -- +(0,-1) -- cycle;
  \draw[very thin,fill,color=yellow] (15,1) -- +(0,1) -- (24,-7) -- +(0,-1) -- cycle;
  \draw[very thin,fill,color=yellow] (21,-11) -- +(0,1) -- (24,-13) -- +(0,-1) -- cycle;
  \draw[very thin,fill,color=yellow] (15,-23) -- +(0,1) -- (18,-25) -- +(0,-1) -- cycle;
  \draw[very thin,fill,color=blue] (8,-35) -- +(1,0) -- (2,-28) -- +(-1,0) -- cycle;
  \draw[very thin,fill,color=blue] (-6,-35) -- +(1,0) -- (-15,-25) -- +(0,-1) -- cycle;
  \draw[very thin,fill,color=blue] (-12,-23) -- +(0,1) -- (-21,-13) -- +(0,-1) -- cycle;
  \draw[very thin,fill,color=blue] (-18,-11) -- +(0,1) -- (-21,-7) -- +(0,-1) -- cycle;
  \draw[very thin,fill,color=blue] (-12,1) -- +(0,1) -- (-15,5) -- +(0,-1) -- cycle;
  \draw[very thin,fill,color=yellow] (-6,14) -- +(1,0) -- (2,7) -- +(-1,0) -- cycle;
  \draw[very thin,fill,color=green] (1,7) -- +(1,0) -- (9,14) -- +(-1,0) -- cycle;
  \draw[very thin,fill,color=green] (18,4) -- +(0,1) -- (15,2) -- +(0,-1) -- cycle;
  \draw[very thin,fill,color=green] (24,-8) -- +(0,1) -- (21,-10) -- +(0,-1) -- cycle;
  \draw[very thin,fill,color=green] (24,-14) -- +(0,1) -- (15,-22) -- +(0,-1) -- cycle;
  \draw[very thin,fill,color=green] (18,-26) -- +(0,1) -- (8,-35) -- +(1,0) -- cycle;
  \draw[very thin,fill,color=red] (1,-28) -- +(1,0) -- (-5,-35) -- +(-1,0) -- cycle;
  \draw[very thin,fill,color=red] (-15,-26) -- +(0,1) -- (-12,-22) -- +(0,-1) -- cycle;
  \draw[very thin,fill,color=red] (-21,-14) -- +(0,1) -- (-18,-10) -- +(0,-1) -- cycle;
  \draw[very thin,fill,color=red] (-21,-8) -- +(0,1) -- (-12,2) -- +(0,-1) -- cycle;
  \draw[very thin,fill,color=red] (-15,4) -- +(0,1) -- (-6,14) -- +(1,0) -- cycle;
\end{tikzpicture}
\end{minipage}

\bigskip
\bigskip
\bigskip

\begin{minipage}[ht]{0.53\linewidth}
\centering
\begin{tikzpicture}
  [x=0.138888888889cm,y=0.108024691358cm,scale=0.45]
  \foreach \i in {-42,-28,...,42}
  \foreach \j in {-72,-54,...,36}
    \draw[thin] (\i,\j) rectangle +(7,9);
  \draw[very thin,fill,color=blue] (14,19) -- +(0,1) -- (7,27) -- +(0,-1) -- cycle;
  \draw[very thin,fill,color=yellow] (16,36) -- +(1,0) -- (28,25) -- +(0,-1) -- cycle;
  \draw[very thin,fill,color=yellow] (7,3) -- +(0,1) -- (20,-9) -- +(-1,0) -- cycle;
  \draw[very thin,fill,color=yellow] (28,0) -- +(1,0) -- (42,-13) -- +(0,-1) -- cycle;
  \draw[very thin,fill,color=blue] (28,-27) -- +(1,0) -- (20,-18) -- +(-1,0) -- cycle;
  \draw[very thin,fill,color=yellow] (7,-31) -- +(0,1) -- (28,-51) -- +(0,-1) -- cycle;
  \draw[very thin,fill,color=blue] (16,-63) -- +(1,0) -- (7,-53) -- +(0,-1) -- cycle;
  \draw[very thin,fill,color=blue] (14,-47) -- +(0,1) -- (4,-36) -- +(-1,0) -- cycle;
  \draw[very thin,fill,color=yellow] (-7,-47) -- +(0,1) -- (0,-53) -- +(0,-1) -- cycle;
  \draw[very thin,fill,color=blue] (-10,-63) -- +(1,0) -- (-21,-51) -- +(0,-1) -- cycle;
  \draw[very thin,fill,color=blue] (0,-31) -- +(0,1) -- (-12,-18) -- +(-1,0) -- cycle;
  \draw[very thin,fill,color=blue] (-22,-27) -- +(1,0) -- (-35,-13) -- +(0,-1) -- cycle;
  \draw[very thin,fill,color=yellow] (-22,0) -- +(1,0) -- (-12,-9) -- +(-1,0) -- cycle;
  \draw[very thin,fill,color=blue] (0,3) -- +(0,1) -- (-21,25) -- +(0,-1) -- cycle;
  \draw[very thin,fill,color=yellow] (-10,36) -- +(1,0) -- (0,27) -- +(0,-1) -- cycle;
  \draw[very thin,fill,color=yellow] (-7,19) -- +(0,1) -- (4,9) -- +(-1,0) -- cycle;
  \draw[very thin,fill,color=green] (3,9) -- +(1,0) -- (14,19) -- +(0,1) -- cycle;
  \draw[very thin,fill,color=red] (7,26) -- +(0,1) -- (16,36) -- +(1,0) -- cycle;
  \draw[very thin,fill,color=green] (28,24) -- +(0,1) -- (7,4) -- +(0,-1) -- cycle;
  \draw[very thin,fill,color=green] (19,-9) -- +(1,0) -- (29,0) -- +(-1,0) -- cycle;
  \draw[very thin,fill,color=green] (42,-14) -- +(0,1) -- (28,-27) -- +(1,0) -- cycle;
  \draw[very thin,fill,color=red] (19,-18) -- +(1,0) -- (7,-31) -- +(0,1) -- cycle;
  \draw[very thin,fill,color=green] (28,-52) -- +(0,1) -- (16,-63) -- +(1,0) -- cycle;
  \draw[very thin,fill,color=red] (7,-54) -- +(0,1) -- (14,-46) -- +(0,-1) -- cycle;
  \draw[very thin,fill,color=red] (3,-36) -- +(1,0) -- (-7,-47) -- +(0,1) -- cycle;
  \draw[very thin,fill,color=green] (0,-54) -- +(0,1) -- (-10,-63) -- +(1,0) -- cycle;
  \draw[very thin,fill,color=red] (-21,-52) -- +(0,1) -- (0,-30) -- +(0,-1) -- cycle;
  \draw[very thin,fill,color=red] (-13,-18) -- +(1,0) -- (-21,-27) -- +(-1,0) -- cycle;
  \draw[very thin,fill,color=red] (-35,-14) -- +(0,1) -- (-22,0) -- +(1,0) -- cycle;
  \draw[very thin,fill,color=green] (-13,-9) -- +(1,0) -- (0,3) -- +(0,1) -- cycle;
  \draw[very thin,fill,color=red] (-21,24) -- +(0,1) -- (-10,36) -- +(1,0) -- cycle;
  \draw[very thin,fill,color=green] (0,26) -- +(0,1) -- (-7,20) -- +(0,-1) -- cycle;
\end{tikzpicture}
\end{minipage}%
\begin{minipage}[ht]{0.43\linewidth}
\centering
\begin{tikzpicture}
  [x=0.108024691358cm,y=0.0883838383838cm,scale=0.45]
  \foreach \i in {-36,-18,...,36}
  \foreach \j in {0,22,...,132}
    \draw[thin] (\i,\j) rectangle +(9,11);
  \draw[very thin,fill,color=blue] (18,24) -- +(0,1) -- (-9,52) -- +(0,-1) -- cycle;
  \draw[very thin,fill,color=yellow] (5,66) -- +(1,0) -- (18,54) -- +(0,-1) -- cycle;
  \draw[very thin,fill,color=yellow] (9,44) -- +(0,1) -- (21,33) -- +(-1,0) -- cycle;
  \draw[very thin,fill,color=blue] (36,48) -- +(0,1) -- (19,66) -- +(-1,0) -- cycle;
  \draw[very thin,fill,color=blue] (7,55) -- +(1,0) -- (-9,72) -- +(0,-1) -- cycle;
  \draw[very thin,fill,color=yellow] (7,88) -- +(1,0) -- (19,77) -- +(-1,0) -- cycle;
  \draw[very thin,fill,color=blue] (36,94) -- +(0,1) -- (21,110) -- +(-1,0) -- cycle;
  \draw[very thin,fill,color=yellow] (9,98) -- +(0,1) -- (18,90) -- +(0,-1) -- cycle;
  \draw[very thin,fill,color=blue] (5,77) -- +(1,0) -- (-9,92) -- +(0,-1) -- cycle;
  \draw[very thin,fill,color=blue] (18,118) -- +(0,1) -- (5,132) -- +(-1,0) -- cycle;
  \draw[very thin,fill,color=yellow] (-9,118) -- +(0,1) -- (18,92) -- +(0,-1) -- cycle;
  \draw[very thin,fill,color=blue] (3,77) -- +(1,0) -- (-9,90) -- +(0,-1) -- cycle;
  \draw[very thin,fill,color=blue] (0,98) -- +(0,1) -- (-11,110) -- +(-1,0) -- cycle;
  \draw[very thin,fill,color=yellow] (-27,94) -- +(0,1) -- (-9,77) -- +(-1,0) -- cycle;
  \draw[very thin,fill,color=yellow] (1,88) -- +(1,0) -- (18,72) -- +(0,-1) -- cycle;
  \draw[very thin,fill,color=blue] (1,55) -- +(1,0) -- (-9,66) -- +(-1,0) -- cycle;
  \draw[very thin,fill,color=yellow] (-27,48) -- +(0,1) -- (-11,33) -- +(-1,0) -- cycle;
  \draw[very thin,fill,color=blue] (0,44) -- +(0,1) -- (-9,54) -- +(0,-1) -- cycle;
  \draw[very thin,fill,color=yellow] (3,66) -- +(1,0) -- (18,52) -- +(0,-1) -- cycle;
  \draw[very thin,fill,color=yellow] (-9,24) -- +(0,1) -- (5,11) -- +(-1,0) -- cycle;
  \draw[very thin,fill,color=green] (4,11) -- +(1,0) -- (18,24) -- +(0,1) -- cycle;
  \draw[very thin,fill,color=red] (-9,51) -- +(0,1) -- (5,66) -- +(1,0) -- cycle;
  \draw[very thin,fill,color=green] (18,53) -- +(0,1) -- (9,45) -- +(0,-1) -- cycle;
  \draw[very thin,fill,color=green] (20,33) -- +(1,0) -- (36,48) -- +(0,1) -- cycle;
  \draw[very thin,fill,color=red] (18,66) -- +(1,0) -- (8,55) -- +(-1,0) -- cycle;
  \draw[very thin,fill,color=red] (-9,71) -- +(0,1) -- (7,88) -- +(1,0) -- cycle;
  \draw[very thin,fill,color=green] (18,77) -- +(1,0) -- (36,94) -- +(0,1) -- cycle;
  \draw[very thin,fill,color=red] (20,110) -- +(1,0) -- (9,98) -- +(0,1) -- cycle;
  \draw[very thin,fill,color=green] (18,89) -- +(0,1) -- (5,77) -- +(1,0) -- cycle;
  \draw[very thin,fill,color=red] (-9,91) -- +(0,1) -- (18,119) -- +(0,-1) -- cycle;
  \draw[very thin,fill,color=red] (4,132) -- +(1,0) -- (-9,118) -- +(0,1) -- cycle;
  \draw[very thin,fill,color=green] (18,91) -- +(0,1) -- (3,77) -- +(1,0) -- cycle;
  \draw[very thin,fill,color=red] (-9,89) -- +(0,1) -- (0,99) -- +(0,-1) -- cycle;
  \draw[very thin,fill,color=red] (-12,110) -- +(1,0) -- (-27,94) -- +(0,1) -- cycle;
  \draw[very thin,fill,color=green] (-10,77) -- +(1,0) -- (2,88) -- +(-1,0) -- cycle;
  \draw[very thin,fill,color=green] (18,71) -- +(0,1) -- (1,55) -- +(1,0) -- cycle;
  \draw[very thin,fill,color=red] (-10,66) -- +(1,0) -- (-27,48) -- +(0,1) -- cycle;
  \draw[very thin,fill,color=green] (-12,33) -- +(1,0) -- (0,44) -- +(0,1) -- cycle;
  \draw[very thin,fill,color=red] (-9,53) -- +(0,1) -- (3,66) -- +(1,0) -- cycle;
  \draw[very thin,fill,color=green] (18,51) -- +(0,1) -- (-9,25) -- +(0,-1) -- cycle;
\end{tikzpicture}
\end{minipage}

\bigskip
\bigskip
\bigskip

\begin{minipage}[ht]{0.98\linewidth}
\centering
\begin{tikzpicture}
  [x=0.102592592593cm,y=0.0318390804598cm,scale=0.5]
  \foreach \i in {-108,-90,...,108}
  \foreach \j in {0,58,...,348}
    \draw[thin] (\i,\j) rectangle +(9,29);
  \draw[very thin,fill,color=blue] (36,60) -- +(0,1) -- (27,70) -- +(0,-1) -- cycle;
  \draw[very thin,fill,color=blue] (36,78) -- +(0,1) -- (-9,124) -- +(0,-1) -- cycle;
  \draw[very thin,fill,color=blue] (0,132) -- +(0,1) -- (-9,142) -- +(0,-1) -- cycle;
  \draw[very thin,fill,color=yellow] (23,174) -- +(1,0) -- (54,144) -- +(0,-1) -- cycle;
  \draw[very thin,fill,color=yellow] (45,134) -- +(0,1) -- (54,126) -- +(0,-1) -- cycle;
  \draw[very thin,fill,color=yellow] (45,116) -- +(0,1) -- (75,87) -- +(-1,0) -- cycle;
  \draw[very thin,fill,color=blue] (108,120) -- +(0,1) -- (99,130) -- +(0,-1) -- cycle;
  \draw[very thin,fill,color=blue] (108,138) -- +(0,1) -- (73,174) -- +(-1,0) -- cycle;
  \draw[very thin,fill,color=blue] (43,145) -- +(1,0) -- (9,180) -- +(0,-1) -- cycle;
  \draw[very thin,fill,color=blue] (18,188) -- +(0,1) -- (9,198) -- +(0,-1) -- cycle;
  \draw[very thin,fill,color=yellow] (43,232) -- +(1,0) -- (73,203) -- +(-1,0) -- cycle;
  \draw[very thin,fill,color=blue] (108,238) -- +(0,1) -- (99,248) -- +(0,-1) -- cycle;
  \draw[very thin,fill,color=blue] (108,256) -- +(0,1) -- (75,290) -- +(-1,0) -- cycle;
  \draw[very thin,fill,color=yellow] (45,260) -- +(0,1) -- (54,252) -- +(0,-1) -- cycle;
  \draw[very thin,fill,color=yellow] (45,242) -- +(0,1) -- (54,234) -- +(0,-1) -- cycle;
  \draw[very thin,fill,color=blue] (23,203) -- +(1,0) -- (-9,236) -- +(0,-1) -- cycle;
  \draw[very thin,fill,color=blue] (0,244) -- +(0,1) -- (-9,254) -- +(0,-1) -- cycle;
  \draw[very thin,fill,color=blue] (36,298) -- +(0,1) -- (27,308) -- +(0,-1) -- cycle;
  \draw[very thin,fill,color=blue] (36,316) -- +(0,1) -- (5,348) -- +(-1,0) -- cycle;
  \draw[very thin,fill,color=yellow] (-27,316) -- +(0,1) -- (-18,308) -- +(0,-1) -- cycle;
  \draw[very thin,fill,color=yellow] (-27,298) -- +(0,1) -- (18,254) -- +(0,-1) -- cycle;
  \draw[very thin,fill,color=yellow] (9,244) -- +(0,1) -- (18,236) -- +(0,-1) -- cycle;
  \draw[very thin,fill,color=blue] (-15,203) -- +(1,0) -- (-45,234) -- +(0,-1) -- cycle;
  \draw[very thin,fill,color=blue] (-36,242) -- +(0,1) -- (-45,252) -- +(0,-1) -- cycle;
  \draw[very thin,fill,color=blue] (-36,260) -- +(0,1) -- (-65,290) -- +(-1,0) -- cycle;
  \draw[very thin,fill,color=yellow] (-99,256) -- +(0,1) -- (-90,248) -- +(0,-1) -- cycle;
  \draw[very thin,fill,color=yellow] (-99,238) -- +(0,1) -- (-63,203) -- +(-1,0) -- cycle;
  \draw[very thin,fill,color=yellow] (-35,232) -- +(1,0) -- (0,198) -- +(0,-1) -- cycle;
  \draw[very thin,fill,color=yellow] (-9,188) -- +(0,1) -- (0,180) -- +(0,-1) -- cycle;
  \draw[very thin,fill,color=blue] (-35,145) -- +(1,0) -- (-63,174) -- +(-1,0) -- cycle;
  \draw[very thin,fill,color=yellow] (-99,138) -- +(0,1) -- (-90,130) -- +(0,-1) -- cycle;
  \draw[very thin,fill,color=yellow] (-99,120) -- +(0,1) -- (-65,87) -- +(-1,0) -- cycle;
  \draw[very thin,fill,color=blue] (-36,116) -- +(0,1) -- (-45,126) -- +(0,-1) -- cycle;
  \draw[very thin,fill,color=blue] (-36,134) -- +(0,1) -- (-45,144) -- +(0,-1) -- cycle;
  \draw[very thin,fill,color=yellow] (-15,174) -- +(1,0) -- (18,142) -- +(0,-1) -- cycle;
  \draw[very thin,fill,color=yellow] (9,132) -- +(0,1) -- (18,124) -- +(0,-1) -- cycle;
  \draw[very thin,fill,color=yellow] (-27,78) -- +(0,1) -- (-18,70) -- +(0,-1) -- cycle;
  \draw[very thin,fill,color=yellow] (-27,60) -- +(0,1) -- (5,29) -- +(-1,0) -- cycle;
  \draw[very thin,fill,color=green] (4,29) -- +(1,0) -- (36,60) -- +(0,1) -- cycle;
  \draw[very thin,fill,color=red] (27,69) -- +(0,1) -- (36,79) -- +(0,-1) -- cycle;
  \draw[very thin,fill,color=red] (-9,123) -- +(0,1) -- (0,133) -- +(0,-1) -- cycle;
  \draw[very thin,fill,color=red] (-9,141) -- +(0,1) -- (23,174) -- +(1,0) -- cycle;
  \draw[very thin,fill,color=green] (54,143) -- +(0,1) -- (45,135) -- +(0,-1) -- cycle;
  \draw[very thin,fill,color=green] (54,125) -- +(0,1) -- (45,117) -- +(0,-1) -- cycle;
  \draw[very thin,fill,color=green] (74,87) -- +(1,0) -- (108,120) -- +(0,1) -- cycle;
  \draw[very thin,fill,color=red] (99,129) -- +(0,1) -- (108,139) -- +(0,-1) -- cycle;
  \draw[very thin,fill,color=red] (72,174) -- +(1,0) -- (44,145) -- +(-1,0) -- cycle;
  \draw[very thin,fill,color=red] (9,179) -- +(0,1) -- (18,189) -- +(0,-1) -- cycle;
  \draw[very thin,fill,color=red] (9,197) -- +(0,1) -- (43,232) -- +(1,0) -- cycle;
  \draw[very thin,fill,color=green] (72,203) -- +(1,0) -- (108,238) -- +(0,1) -- cycle;
  \draw[very thin,fill,color=red] (99,247) -- +(0,1) -- (108,257) -- +(0,-1) -- cycle;
  \draw[very thin,fill,color=red] (74,290) -- +(1,0) -- (45,260) -- +(0,1) -- cycle;
  \draw[very thin,fill,color=green] (54,251) -- +(0,1) -- (45,243) -- +(0,-1) -- cycle;
  \draw[very thin,fill,color=green] (54,233) -- +(0,1) -- (23,203) -- +(1,0) -- cycle;
  \draw[very thin,fill,color=red] (-9,235) -- +(0,1) -- (0,245) -- +(0,-1) -- cycle;
  \draw[very thin,fill,color=red] (-9,253) -- +(0,1) -- (36,299) -- +(0,-1) -- cycle;
  \draw[very thin,fill,color=red] (27,307) -- +(0,1) -- (36,317) -- +(0,-1) -- cycle;
  \draw[very thin,fill,color=red] (4,348) -- +(1,0) -- (-27,316) -- +(0,1) -- cycle;
  \draw[very thin,fill,color=green] (-18,307) -- +(0,1) -- (-27,299) -- +(0,-1) -- cycle;
  \draw[very thin,fill,color=green] (18,253) -- +(0,1) -- (9,245) -- +(0,-1) -- cycle;
  \draw[very thin,fill,color=green] (18,235) -- +(0,1) -- (-15,203) -- +(1,0) -- cycle;
  \draw[very thin,fill,color=red] (-45,233) -- +(0,1) -- (-36,243) -- +(0,-1) -- cycle;
  \draw[very thin,fill,color=red] (-45,251) -- +(0,1) -- (-36,261) -- +(0,-1) -- cycle;
  \draw[very thin,fill,color=red] (-66,290) -- +(1,0) -- (-99,256) -- +(0,1) -- cycle;
  \draw[very thin,fill,color=green] (-90,247) -- +(0,1) -- (-99,239) -- +(0,-1) -- cycle;
  \draw[very thin,fill,color=green] (-64,203) -- +(1,0) -- (-34,232) -- +(-1,0) -- cycle;
  \draw[very thin,fill,color=green] (0,197) -- +(0,1) -- (-9,189) -- +(0,-1) -- cycle;
  \draw[very thin,fill,color=green] (0,179) -- +(0,1) -- (-35,145) -- +(1,0) -- cycle;
  \draw[very thin,fill,color=red] (-64,174) -- +(1,0) -- (-99,138) -- +(0,1) -- cycle;
  \draw[very thin,fill,color=green] (-90,129) -- +(0,1) -- (-99,121) -- +(0,-1) -- cycle;
  \draw[very thin,fill,color=green] (-66,87) -- +(1,0) -- (-36,116) -- +(0,1) -- cycle;
  \draw[very thin,fill,color=red] (-45,125) -- +(0,1) -- (-36,135) -- +(0,-1) -- cycle;
  \draw[very thin,fill,color=red] (-45,143) -- +(0,1) -- (-15,174) -- +(1,0) -- cycle;
  \draw[very thin,fill,color=green] (18,141) -- +(0,1) -- (9,133) -- +(0,-1) -- cycle;
  \draw[very thin,fill,color=green] (18,123) -- +(0,1) -- (-27,79) -- +(0,-1) -- cycle;
  \draw[very thin,fill,color=green] (-18,69) -- +(0,1) -- (-27,61) -- +(0,-1) -- cycle;
\end{tikzpicture}
\end{minipage}
\caption{Obstacle size $1/2 \times 1/2$. Periodic trajectories: 
slopes $1$, $3/7$, $7/9$, $9/11$ and $9/29$.}
\label{fig:periodic}
\end{figure}

\begin{figure}[ht]
\begin{minipage}[ht]{0.38\linewidth}
\centering
\begin{tikzpicture}
  [x=0.583333333333cm,y=0.4375cm,scale=0.3]
  \fill [gray] (0,0) rectangle ++(3,4);
  \fill [gray] (12,0) rectangle ++(3,4);
  \foreach \i in {0,6,...,12}
  \foreach \j in {0,8,...,16}
    \draw[thin] (\i,\j) rectangle +(3,4);
  \draw[very thin,fill,color=blue] (6,8) -- +(0,1) -- (3,12) -- +(0,-1) -- cycle;
  \draw[very thin,fill,color=yellow] (7,16) -- +(1,0) -- (12,12) -- +(0,-1) -- cycle;
  \draw[very thin,fill,color=yellow] (9,8) -- +(0,1) -- (14,4) -- +(-1,0) -- cycle;
  \draw[very thin,fill,color=green] (1,4) -- +(1,0) -- (6,8) -- +(0,1) -- cycle;
  \draw[very thin,fill,color=red] (3,11) -- +(0,1) -- (7,16) -- +(1,0) -- cycle;
  \draw[very thin,fill,color=green] (12,11) -- +(0,1) -- (9,9) -- +(0,-1) -- cycle;
\end{tikzpicture}
\end{minipage}%
\begin{minipage}[ht]{0.58\linewidth}
\centering
\begin{tikzpicture}
  [x=0.0972222222222cm,y=0.0571895424837cm,scale=0.4]
  \fill [gray] (0,0) rectangle ++(10,17);
  \fill [gray] (140,102) rectangle ++(10,17);
  \foreach \i in {0,20,...,140}
  \foreach \j in {-68,-34,...,136}
    \draw[thin] (\i,\j) rectangle +(10,17);
  \draw[very thin,fill,color=yellow] (22,34) -- +(1,0) -- (40,17) -- +(0,-1) -- cycle;
  \draw[very thin,fill,color=yellow] (30,6) -- +(0,1) -- (60,-23) -- +(0,-1) -- cycle;
  \draw[very thin,fill,color=yellow] (50,-34) -- +(0,1) -- (68,-51) -- +(-1,0) -- cycle;
  \draw[very thin,fill,color=yellow] (84,-34) -- +(1,0) -- (102,-51) -- +(-1,0) -- cycle;
  \draw[very thin,fill,color=blue] (120,-33) -- +(0,1) -- (110,-22) -- +(0,-1) -- cycle;
  \draw[very thin,fill,color=blue] (140,7) -- +(0,1) -- (110,38) -- +(0,-1) -- cycle;
  \draw[very thin,fill,color=blue] (120,47) -- +(0,1) -- (90,78) -- +(0,-1) -- cycle;
  \draw[very thin,fill,color=blue] (120,107) -- +(0,1) -- (110,118) -- +(0,-1) -- cycle;
  \draw[very thin,fill,color=yellow] (128,136) -- +(1,0) -- (146,119) -- +(-1,0) -- cycle;
  \draw[very thin,fill,color=green] (5,17) -- +(1,0) -- (23,34) -- +(-1,0) -- cycle;
  \draw[very thin,fill,color=green] (40,16) -- +(0,1) -- (30,7) -- +(0,-1) -- cycle;
  \draw[very thin,fill,color=green] (60,-24) -- +(0,1) -- (50,-33) -- +(0,-1) -- cycle;
  \draw[very thin,fill,color=green] (67,-51) -- +(1,0) -- (85,-34) -- +(-1,0) -- cycle;
  \draw[very thin,fill,color=green] (101,-51) -- +(1,0) -- (120,-33) -- +(0,1) -- cycle;
  \draw[very thin,fill,color=red] (110,-23) -- +(0,1) -- (140,8) -- +(0,-1) -- cycle;
  \draw[very thin,fill,color=red] (110,37) -- +(0,1) -- (120,48) -- +(0,-1) -- cycle;
  \draw[very thin,fill,color=red] (90,77) -- +(0,1) -- (120,108) -- +(0,-1) -- cycle;
  \draw[very thin,fill,color=red] (110,117) -- +(0,1) -- (128,136) -- +(1,0) -- cycle;
\end{tikzpicture}
\end{minipage}%

\bigskip
\bigskip
\medskip

\begin{minipage}[ht]{0.38\linewidth}
\centering
\begin{tikzpicture}
  [x=0.078125cm,y=0.0643382352941cm,scale=0.4]
  \fill [gray] (0,0) rectangle ++(14,17);
  \fill [gray] (28,68) rectangle ++(14,17);
  \foreach \i in {-28,0,...,56}
  \foreach \j in {0,34,...,170}
    \draw[thin] (\i,\j) rectangle +(14,17);
  \draw[very thin,fill,color=blue] (28,37) -- +(0,1) -- (-14,80) -- +(0,-1) -- cycle;
  \draw[very thin,fill,color=yellow] (8,102) -- +(1,0) -- (28,83) -- +(0,-1) -- cycle;
  \draw[very thin,fill,color=yellow] (14,68) -- +(0,1) -- (32,51) -- +(-1,0) -- cycle;
  \draw[very thin,fill,color=blue] (56,75) -- +(0,1) -- (30,102) -- +(-1,0) -- cycle;
  \draw[very thin,fill,color=blue] (12,85) -- +(1,0) -- (-14,112) -- +(0,-1) -- cycle;
  \draw[very thin,fill,color=yellow] (10,136) -- +(1,0) -- (28,119) -- +(0,-1) -- cycle;
  \draw[very thin,fill,color=yellow] (14,104) -- +(0,1) -- (34,85) -- +(-1,0) -- cycle;
  \draw[very thin,fill,color=blue] (56,107) -- +(0,1) -- (14,150) -- +(0,-1) -- cycle;
  \draw[very thin,fill,color=yellow] (34,170) -- +(1,0) -- (56,149) -- +(0,-1) -- cycle;
  \draw[very thin,fill,color=yellow] (14,106) -- +(0,1) -- (36,85) -- +(-1,0) -- cycle;
  \draw[very thin,fill,color=green] (7,17) -- +(1,0) -- (28,37) -- +(0,1) -- cycle;
  \draw[very thin,fill,color=red] (-14,79) -- +(0,1) -- (8,102) -- +(1,0) -- cycle;
  \draw[very thin,fill,color=green] (28,82) -- +(0,1) -- (14,69) -- +(0,-1) -- cycle;
  \draw[very thin,fill,color=green] (31,51) -- +(1,0) -- (56,75) -- +(0,1) -- cycle;
  \draw[very thin,fill,color=red] (29,102) -- +(1,0) -- (13,85) -- +(-1,0) -- cycle;
  \draw[very thin,fill,color=red] (-14,111) -- +(0,1) -- (10,136) -- +(1,0) -- cycle;
  \draw[very thin,fill,color=green] (28,118) -- +(0,1) -- (14,105) -- +(0,-1) -- cycle;
  \draw[very thin,fill,color=green] (33,85) -- +(1,0) -- (56,107) -- +(0,1) -- cycle;
  \draw[very thin,fill,color=red] (14,149) -- +(0,1) -- (34,170) -- +(1,0) -- cycle;
  \draw[very thin,fill,color=green] (56,148) -- +(0,1) -- (14,107) -- +(0,-1) -- cycle;
\end{tikzpicture}
\end{minipage}%
\begin{minipage}[ht]{0.58\linewidth}
\centering
\begin{tikzpicture}
  [x=0.0546875cm,y=0.0236486486486cm,scale=0.4]
  \fill [gray] (0,0) rectangle ++(16,37);
  \fill [gray] (128,-74) rectangle ++(16,37);
  \foreach \i in {-160,-128,...,128}
  \foreach \j in {-444,-370,...,74}
    \draw[thin] (\i,\j) rectangle +(16,37);
  \draw[very thin,fill,color=yellow] (45,74) -- +(1,0) -- (96,24) -- +(0,-1) -- cycle;
  \draw[very thin,fill,color=yellow] (80,7) -- +(0,1) -- (128,-40) -- +(0,-1) -- cycle;
  \draw[very thin,fill,color=yellow] (112,-57) -- +(0,1) -- (128,-72) -- +(0,-1) -- cycle;
  \draw[very thin,fill,color=yellow] (80,-121) -- +(0,1) -- (96,-136) -- +(0,-1) -- cycle;
  \draw[very thin,fill,color=blue] (47,-185) -- +(1,0) -- (11,-148) -- +(-1,0) -- cycle;
  \draw[very thin,fill,color=blue] (-27,-185) -- +(1,0) -- (-63,-148) -- +(-1,0) -- cycle;
  \draw[very thin,fill,color=yellow] (-112,-197) -- +(0,1) -- (-96,-212) -- +(0,-1) -- cycle;
  \draw[very thin,fill,color=yellow] (-144,-261) -- +(0,1) -- (-128,-276) -- +(0,-1) -- cycle;
  \draw[very thin,fill,color=yellow] (-144,-293) -- +(0,1) -- (-96,-340) -- +(0,-1) -- cycle;
  \draw[very thin,fill,color=yellow] (-112,-357) -- +(0,1) -- (-61,-407) -- +(-1,0) -- cycle;
  \draw[very thin,fill,color=yellow] (-25,-370) -- +(1,0) -- (13,-407) -- +(-1,0) -- cycle;
  \draw[very thin,fill,color=blue] (64,-356) -- +(0,1) -- (48,-339) -- +(0,-1) -- cycle;
  \draw[very thin,fill,color=blue] (96,-292) -- +(0,1) -- (80,-275) -- +(0,-1) -- cycle;
  \draw[very thin,fill,color=blue] (96,-260) -- +(0,1) -- (48,-211) -- +(0,-1) -- cycle;
  \draw[very thin,fill,color=blue] (64,-196) -- +(0,1) -- (16,-147) -- +(0,-1) -- cycle;
  \draw[very thin,fill,color=blue] (32,-132) -- +(0,1) -- (16,-115) -- +(0,-1) -- cycle;
  \draw[very thin,fill,color=blue] (64,-68) -- +(0,1) -- (48,-51) -- +(0,-1) -- cycle;
  \draw[very thin,fill,color=yellow] (99,0) -- +(1,0) -- (137,-37) -- +(-1,0) -- cycle;
  \draw[very thin,fill,color=green] (8,37) -- +(1,0) -- (46,74) -- +(-1,0) -- cycle;
  \draw[very thin,fill,color=green] (96,23) -- +(0,1) -- (80,8) -- +(0,-1) -- cycle;
  \draw[very thin,fill,color=green] (128,-41) -- +(0,1) -- (112,-56) -- +(0,-1) -- cycle;
  \draw[very thin,fill,color=green] (128,-73) -- +(0,1) -- (80,-120) -- +(0,-1) -- cycle;
  \draw[very thin,fill,color=green] (96,-137) -- +(0,1) -- (47,-185) -- +(1,0) -- cycle;
  \draw[very thin,fill,color=red] (10,-148) -- +(1,0) -- (-26,-185) -- +(-1,0) -- cycle;
  \draw[very thin,fill,color=red] (-64,-148) -- +(1,0) -- (-112,-197) -- +(0,1) -- cycle;
  \draw[very thin,fill,color=green] (-96,-213) -- +(0,1) -- (-144,-260) -- +(0,-1) -- cycle;
  \draw[very thin,fill,color=green] (-128,-277) -- +(0,1) -- (-144,-292) -- +(0,-1) -- cycle;
  \draw[very thin,fill,color=green] (-96,-341) -- +(0,1) -- (-112,-356) -- +(0,-1) -- cycle;
  \draw[very thin,fill,color=green] (-62,-407) -- +(1,0) -- (-24,-370) -- +(-1,0) -- cycle;
  \draw[very thin,fill,color=green] (12,-407) -- +(1,0) -- (64,-356) -- +(0,1) -- cycle;
  \draw[very thin,fill,color=red] (48,-340) -- +(0,1) -- (96,-291) -- +(0,-1) -- cycle;
  \draw[very thin,fill,color=red] (80,-276) -- +(0,1) -- (96,-259) -- +(0,-1) -- cycle;
  \draw[very thin,fill,color=red] (48,-212) -- +(0,1) -- (64,-195) -- +(0,-1) -- cycle;
  \draw[very thin,fill,color=red] (16,-148) -- +(0,1) -- (32,-131) -- +(0,-1) -- cycle;
  \draw[very thin,fill,color=red] (16,-116) -- +(0,1) -- (64,-67) -- +(0,-1) -- cycle;
  \draw[very thin,fill,color=red] (48,-52) -- +(0,1) -- (99,0) -- +(1,0) -- cycle;
\end{tikzpicture}
\end{minipage}

\bigskip
\bigskip
\medskip

\begin{minipage}[ht]{0.98\linewidth}
\centering
\begin{tikzpicture}
  [x=0.0320601851852cm,y=0.0131528964862cm,scale=0.5]
  \fill [gray] (0,0) rectangle ++(16,39);
  \fill [gray] (736,468) rectangle ++(16,37);
  \foreach \i in {0,32,...,736}
  \foreach \j in {-156,-78,...,546}
    \draw[very thin] (\i,\j) rectangle +(16,39);
  \draw[very thin,fill,color=yellow] (47,78) -- +(1,0) -- (96,30) -- +(0,-1) -- cycle;
  \draw[very thin,fill,color=yellow] (80,13) -- +(0,1) -- (133,-39) -- +(-1,0) -- cycle;
  \draw[very thin,fill,color=yellow] (171,0) -- +(1,0) -- (224,-52) -- +(0,-1) -- cycle;
  \draw[very thin,fill,color=yellow] (208,-69) -- +(0,1) -- (257,-117) -- +(-1,0) -- cycle;
  \draw[very thin,fill,color=yellow] (295,-78) -- +(1,0) -- (335,-117) -- +(-1,0) -- cycle;
  \draw[very thin,fill,color=blue] (384,-68) -- +(0,1) -- (368,-51) -- +(0,-1) -- cycle;
  \draw[very thin,fill,color=yellow] (419,0) -- +(1,0) -- (459,-39) -- +(-1,0) -- cycle;
  \draw[very thin,fill,color=blue] (512,14) -- +(0,1) -- (496,31) -- +(0,-1) -- cycle;
  \draw[very thin,fill,color=blue] (544,78) -- +(0,1) -- (528,95) -- +(0,-1) -- cycle;
  \draw[very thin,fill,color=blue] (544,110) -- +(0,1) -- (496,159) -- +(0,-1) -- cycle;
  \draw[very thin,fill,color=blue] (512,174) -- +(0,1) -- (496,191) -- +(0,-1) -- cycle;
  \draw[very thin,fill,color=blue] (544,238) -- +(0,1) -- (528,255) -- +(0,-1) -- cycle;
  \draw[very thin,fill,color=blue] (544,270) -- +(0,1) -- (496,319) -- +(0,-1) -- cycle;
  \draw[very thin,fill,color=blue] (512,334) -- +(0,1) -- (496,351) -- +(0,-1) -- cycle;
  \draw[very thin,fill,color=blue] (544,398) -- +(0,1) -- (528,415) -- +(0,-1) -- cycle;
  \draw[very thin,fill,color=yellow] (581,468) -- +(1,0) -- (621,429) -- +(-1,0) -- cycle;
  \draw[very thin,fill,color=blue] (672,480) -- +(0,1) -- (656,497) -- +(0,-1) -- cycle;
  \draw[very thin,fill,color=yellow] (705,546) -- +(1,0) -- (745,507) -- +(-1,0) -- cycle;
  \draw[very thin,fill,color=green] (8,39) -- +(1,0) -- (48,78) -- +(-1,0) -- cycle;
  \draw[very thin,fill,color=green] (96,29) -- +(0,1) -- (80,14) -- +(0,-1) -- cycle;
  \draw[very thin,fill,color=green] (132,-39) -- +(1,0) -- (172,0) -- +(-1,0) -- cycle;
  \draw[very thin,fill,color=green] (224,-53) -- +(0,1) -- (208,-68) -- +(0,-1) -- cycle;
  \draw[very thin,fill,color=green] (256,-117) -- +(1,0) -- (296,-78) -- +(-1,0) -- cycle;
  \draw[very thin,fill,color=green] (334,-117) -- +(1,0) -- (384,-68) -- +(0,1) -- cycle;
  \draw[very thin,fill,color=red] (368,-52) -- +(0,1) -- (419,0) -- +(1,0) -- cycle;
  \draw[very thin,fill,color=green] (458,-39) -- +(1,0) -- (512,14) -- +(0,1) -- cycle;
  \draw[very thin,fill,color=red] (496,30) -- +(0,1) -- (544,79) -- +(0,-1) -- cycle;
  \draw[very thin,fill,color=red] (528,94) -- +(0,1) -- (544,111) -- +(0,-1) -- cycle;
  \draw[very thin,fill,color=red] (496,158) -- +(0,1) -- (512,175) -- +(0,-1) -- cycle;
  \draw[very thin,fill,color=red] (496,190) -- +(0,1) -- (544,239) -- +(0,-1) -- cycle;
  \draw[very thin,fill,color=red] (528,254) -- +(0,1) -- (544,271) -- +(0,-1) -- cycle;
  \draw[very thin,fill,color=red] (496,318) -- +(0,1) -- (512,335) -- +(0,-1) -- cycle;
  \draw[very thin,fill,color=red] (496,350) -- +(0,1) -- (544,399) -- +(0,-1) -- cycle;
  \draw[very thin,fill,color=red] (528,414) -- +(0,1) -- (581,468) -- +(1,0) -- cycle;
  \draw[very thin,fill,color=green] (620,429) -- +(1,0) -- (672,480) -- +(0,1) -- cycle;
  \draw[very thin,fill,color=red] (656,496) -- +(0,1) -- (705,546) -- +(1,0) -- cycle;
\end{tikzpicture}
\end{minipage}

\caption{Escaping trajectories: 
slopes $3/4$, $10/17$, $14/17$, $16/37$, and $16/39$, for obstacle size $1/2 
\times 1/2$. The grayed obstacles 
are hit at the same location, so the trajectory repeats afterwards 
with a drift.}
\label{fig:escaping}
\end{figure}
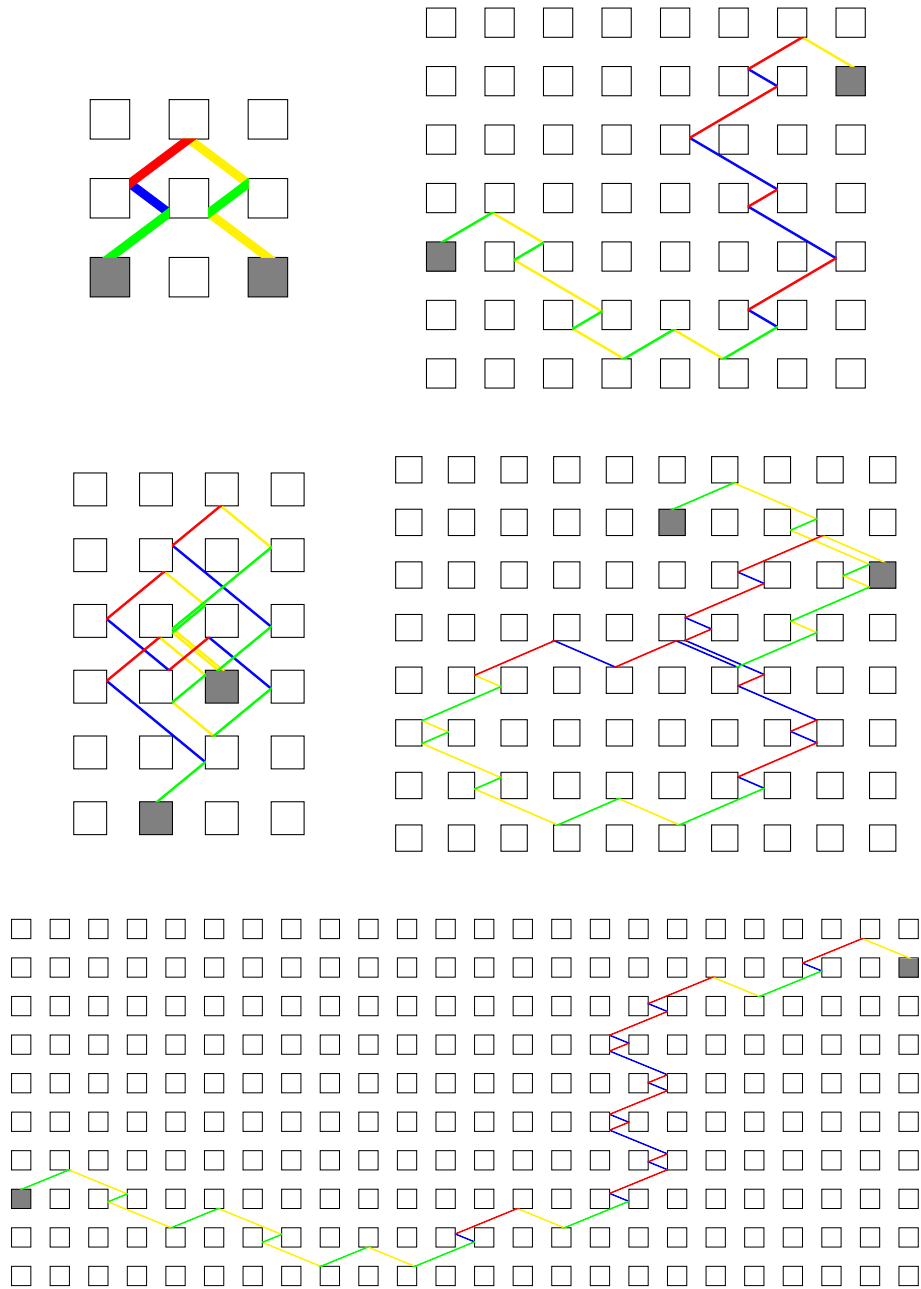

\newpage

\end{document}